\documentclass[12pt]{amsart}
\usepackage[T1]{fontenc}
\usepackage{amsmath,amssymb,amsthm,mathrsfs,txfonts}
\usepackage[a4paper,margin=25mm]{geometry}
\usepackage[hidelinks]{hyperref}
\usepackage{etoolbox}
\usepackage{needspace}
\makeatletter
\patchcmd{\@settitle}{\uppercasenonmath\@title}{}{}{}
\patchcmd{\@setauthors}{\MakeUppercase{\authors}}{\authors}{}{}
\apptocmd{\@setauthors}{\firstpageauthorinfo}{}{}
\@ifundefined{@setaddresses}{}{\let\@setaddresses\@empty}
\renewcommand{\subsection}{\@startsection{subsection}{2}
  {\z@}{.5\baselineskip\@plus.7\baselineskip}{.25\baselineskip}
  {\normalfont\bfseries}}
\makeatother
\allowdisplaybreaks
\numberwithin{equation}{section}
\newtheorem{theorem}{Theorem}[section]
\newtheorem{proposition}[theorem]{Proposition}
\newtheorem{corollary}[theorem]{Corollary}
\newtheorem{lem}[theorem]{Lemma}
\theoremstyle{definition}
\newtheorem{definition}[theorem]{Definition}
\theoremstyle{remark}

\newtheorem{example}[theorem]{Example}

\begin{document}
\title[Bilinear pseudodifferential operators]
{Boundedness of nonregular bilinear pseudodifferential operators
on variable Besov and Triebel--Lizorkin spaces}
\author[T. Bai]{Tengfei Bai$^{1}$}
\author[J. Xu]{Jingshi Xu$^{2,3}$}
\author[J. Zhu]{Jinlai Zhu$^{4,\ast}$}
\thanks{$\ast$ Corresponding author.}
\subjclass[2020]{Primary 35S05; Secondary 42B25, 46E35}
\keywords{Bilinear pseudodifferential operator, nonregular symbol,
Besov space, Triebel--Lizorkin space, variable exponent}
\date{}

\newcommand{\firstpageauthorinfo}{%
\begin{center}
\begin{minipage}{0.96\textwidth}
\centering\footnotesize
\textsuperscript{1}School of Mathematics and Statistics, Hainan Normal University,
Haikou, Hainan 571158, China\par
\smallskip
\textsuperscript{2}School of Mathematics and Computing Science, Guangxi Colleges
and Universities Key Laboratory of Data Analysis and Computation,
Guilin University of Electronic Technology, Guilin, 541004, China\par
\smallskip
\textsuperscript{3}Center for Applied Mathematics of Guangxi (GUET),
Guilin, 541004, China\par
\smallskip
\textsuperscript{4}School of Computer Engineering, Guangzhou City University of
Technology, Guangzhou 510800, China\par
\medskip
\textit{E-mails}: \href{mailto:202311070100007@hainnu.edu.cn}{\texttt{202311070100007@hainnu.edu.cn}},
\href{mailto:baitengfei1@126.com}{\texttt{baitengfei1@126.com}} (T. Bai)\par
\href{mailto:jingshixu@126.com}{\texttt{jingshixu@126.com}} (J. Xu)\par
\href{mailto:zhujl@gcu.edu.cn}{\texttt{zhujl@gcu.edu.cn}} (J. Zhu)
\end{minipage}
\end{center}%
}

\begin{abstract}
In this article, we introduce two bilinear, non-regular symbol classes\linebreak
$BS\mathcal B^m_\delta(r,\mu,v;N,\lambda)$ and
$BS\mathcal F^m_\delta(r,\mu,v;N,\lambda)$, where $m\in\mathbb R$,
$0\leq\delta<1$, $0<\mu,v\leq\infty$, $1\leq\lambda\leq2$,
$r>n/\mu$, and $N>2n/\lambda$, with $\mu<\infty$ for the latter class.
For $\mu\geq1$, $(1-\delta)r\geq n/\mu$, and sufficiently large $N$,
we establish boundedness of the associated bilinear pseudodifferential
operators on Besov and Triebel--Lizorkin spaces with variable smoothness,
integrability and summation exponents.
To the best of the authors' knowledge, these operator estimates are new
even on the classical Besov and
Triebel--Lizorkin spaces with constant exponents.
As an application, we establish the multiplication algebra property
for both variable exponent spaces.
\end{abstract}
\maketitle

\section{Introduction}\label{sec:introduction}

Pseudodifferential operators with nonregular symbols arise naturally
when the coefficients have only finite smoothness. In this setting,
the regularity of the symbol in the spatial variable and its regularity
in the frequency variables have to be distinguished. Marschall
\cite{11,12} studied symbols defined by vector-valued Besov and
Triebel--Lizorkin spaces and established boundedness of the associated
linear pseudodifferential operators on Besov and Triebel--Lizorkin
spaces. These symbol classes allow spatial regularity to be expressed
by a function space norm, rather than by bounds for derivatives of
every order. They also contain the classical smooth H\"ormander
symbols for suitable choices of the parameters.

For a measurable symbol $a(x,\xi,\eta)$ on $\mathbb R^{3n}$, the
associated bilinear pseudodifferential operator is given by
\[
T_a(f,g)(x)=\int_{\mathbb R^{2n}}
e^{2\pi ix\cdot(\xi+\eta)}a(x,\xi,\eta)
\widehat f(\xi)\widehat g(\eta)\,\mathrm d\xi\,\mathrm d\eta,
\qquad f,g\in\mathcal S(\mathbb R^n),
\]
whenever the integral is absolutely convergent. The boundedness of
these operators depends on the interaction between the two input
frequencies. B\'enyi, Bernicot, Maldonado, Naibo and Torres \cite{b}
studied bilinear H\"ormander classes and obtained boundedness results
for the associated operators. Naibo \cite{NO} established estimates
on nonhomogeneous Besov and Triebel--Lizorkin spaces of positive
smoothness. In a nonregular setting, Herbert and Naibo \cite{HN}
proved Lebesgue space estimates for symbols belonging to Besov spaces
of product type. Their results show that finite regularity conditions
on a bilinear symbol can be sufficient for boundedness.

Besov and Triebel--Lizorkin spaces with variable smoothness and
integrability provide a natural setting for estimates whose local
regularity or integrability requirements depend on the spatial point.
Early work on variable smoothness includes Leopold's study of
Besov-type spaces of variable order of differentiation \cite{Leopold89}
and Besov's results on equivalent norms, interpolation, embeddings and
extensions for spaces of variable smoothness \cite{Besov03,Besov05}.
Xu \cite{Xu08,Xu09} studied Besov and Triebel--Lizorkin spaces with
variable integrability $p(\cdot)$ and constant smoothness and summation
exponents, obtaining equivalent norms and atomic decompositions.
Diening, H\"ast\"o and Roudenko \cite{DHR} introduced the
Triebel--Lizorkin scale with all three indices variable, and Almeida
and H\"ast\"o \cite{c} introduced the corresponding Besov scale;
these works also established independence of the admissible dyadic
resolution.

Kempka \cite{Kempka09,Kempka10} developed a more general
$2$-microlocal framework with variable integrability, including local
means and atomic, molecular and wavelet characterizations.
Vyb\'iral \cite{Vybiral09} proved Sobolev and Jawerth embeddings for
Triebel--Lizorkin spaces with variable smoothness and integrability.
Kempka and Vyb\'iral \cite{n} obtained characterizations by local
means, ball means of differences and Peetre maximal functions.
Almeida and Caetano \cite{AC} revisited the Peetre maximal
characterizations and dyadic resolution independence in the
$2$-microlocal setting with all exponents variable, and established
lifting properties, embeddings and Fourier multiplier results.
We use these characterizations and convolution estimates below to
compare the dyadic norms and to estimate localized functions.

Several authors have considered pseudodifferential operators on these
variable exponent spaces. Drihem and Hebbache \cite{DB,DHC} proved
boundedness of linear operators with symbols measured by vector-valued
Besov norms, respectively on variable Besov and Triebel--Lizorkin
spaces. Their symbol classes include classical smooth symbols.
For bilinear operators, Xu and Zhu \cite{XE} obtained Leibniz-type
estimates associated with bilinear H\"ormander classes. Liu and
Wang \cite{LT} studied the smooth class $BS^m_{1,1}$ on both variable
Besov and Triebel--Lizorkin spaces. These works motivate the study of
bilinear operators whose spatial regularity is finite and whose joint
frequency regularity is measured by a Besov norm.

We introduce the nonregular symbol classes
$BS\mathcal B^m_\delta(r,\mu,v;N,\lambda)$ and
$BS\mathcal F^m_\delta(r,\mu,v;N,\lambda)$.
For each joint frequency index $k\in\mathbb N_0$, both input frequencies
are normalized at the scale $2^k$. The normalized symbol pieces have
uniform $B^N_{\lambda,\infty}(\mathbb R^{2n})$ bounds of order $2^{km}$
and spatial Besov or Triebel--Lizorkin bounds of order
$2^{k(m+\delta r)}$. Thus the regularity in $x$ and in $(\xi,\eta)$
can be prescribed separately.

Theorem~\ref{thm:triebel} gives two-term Leibniz estimates on the
variable Besov and Triebel--Lizorkin spaces for both symbol classes.
The bounded variable exponents $p,p_1,p_2,q$ have globally log-H\"older
continuous reciprocals and satisfy
\[
\frac1{p(x)}=\frac1{p_1(x)}+\frac1{p_2(x)},\qquad
p^-,p_1^-,p_2^-,q^->1,\qquad x\in\mathbb R^n.
\]
For bounded, locally log-H\"older continuous smoothness $s$, the spatial
conditions are
\[
(1-\delta)r\geq\frac n\mu,\qquad
0<s^-\leq s^+<r-\frac n\mu.
\]
Under these conditions and sufficiently large joint frequency regularity
$N$, we prove \eqref{eq:triebel-main} and \eqref{eq:besov-main}:
in each term, one input carries the smoothness $s(\cdot)+m$, and the
other is measured in a variable Lebesgue space.
The precise assumptions on the symbol parameters are given in
Theorem~\ref{thm:triebel}. If $s^-+m>0$, the estimates extend by density
to continuous bilinear maps on the corresponding products of function
spaces, as stated in Corollary~\ref{cor:extension}.

We also apply the dyadic argument to pointwise multiplication, allowing
the smoothness, integrability and summation exponents all to vary.
Theorem~\ref{thm:algebra} proves that
$B^{s(\cdot)}_{p(\cdot),q(\cdot)}$ and
$F^{s(\cdot)}_{p(\cdot),q(\cdot)}$ are multiplication algebras when
$s(\cdot)-n/p(\cdot)$ has positive essential infimum.

The symbol classes also retain the classical smooth theory. For
$m\in\mathbb R$, $0\leq\delta<1$, $r>0$, $1\leq\lambda\leq2$, and
$N>2n/\lambda$, Corollary~\ref{cor:strict-symbol} shows that
\[
BS^m_{1,\delta}\subsetneq
BS\mathcal B^m_\delta(r,\infty,\infty;N,\lambda).
\]
Example~\ref{ex:rough-both} gives symbols in the Besov symbol class
with finite regularity in both the spatial and joint frequency variables
that are not $C^\infty$ in either group of variables.

The paper is organized as follows. Section~\ref{sec:definitions} contains
the definitions and main results. Section~\ref{sec:symbol-properties}
establishes independence of the Besov symbol quasi-norm from the dyadic
resolutions and joint frequency cutoffs, parameter embeddings, cutoff
stability, and the relation to classical smooth symbols.
Section~\ref{sec:main-proof} collects the preliminary estimates and
proves Theorems~\ref{thm:triebel} and~\ref{thm:algebra} and
Corollary~\ref{cor:extension}.

\section{Preliminaries and main results}\label{sec:definitions}

Throughout the paper, $n\in\mathbb N$, $\mathbb N_0=\{0,1,2,\ldots\}$,
and all function spaces are defined on $\mathbb R^n$ unless another
dimension is indicated. We write $u\lesssim v$ if $u\leq Cv$ with a
constant independent of the functions and dyadic indices under
consideration. The relation $u\asymp v$ means $u\lesssim v\lesssim u$.
We use $1/\infty=0$.
For each $d\in\mathbb N$, we write
\[
C_b^\infty(\mathbb R^d)
:=\left\{f\in C^\infty(\mathbb R^d):
\left\|\partial^\alpha f\right\|_{L^\infty(\mathbb R^d)}<\infty
\text{ for every }\alpha\in\mathbb N_0^d\right\}.
\]
\subsection{Variable exponents and dyadic decompositions}\label{subsec:variable-spaces}

Following \cite{c,DHR}, we use the following notation for bounded
variable exponents. For a measurable exponent $p:\mathbb R^n\to(0,\infty)$, put
\[
p^-=\mathop{\rm ess\,inf}_{x\in\mathbb R^n}p(x),
\qquad p^+=\mathop{\rm ess\,sup}_{x\in\mathbb R^n}p(x).
\]
We write $p\in\mathcal P_0$ when $0<p^-\leq p^+<\infty$.
The modular and Luxemburg quasi-norm are
\[
\varrho_{p(\cdot)}(f)=\int_{\mathbb R^n}\left|f(x)\right|^{p(x)}\,\mathrm d x,
\qquad
\left\|f\right\|_{L^{p(\cdot)}}=
\inf\left\{\lambda>0:\varrho_{p(\cdot)}(f/\lambda)\leq1\right\}.
\]
The space $L^{p(\cdot)}$ consists of measurable functions having finite
quasi-norm.

A bounded real-valued function $u$ is locally log-H\"older continuous
if there exists a constant $c_{\log}(u)\geq0$ such that
\[
\left|u(x)-u(y)\right|\leq\frac{c_{\log}(u)}{\log(e+1/\left|x-y\right|)},\qquad x\ne y.
\]
It is globally log-H\"older continuous if, in addition, there exist
$u_\infty\in\mathbb R$ and $c_\infty(u)\geq0$ with
\[
\left|u(x)-u_\infty\right|\leq\frac{c_\infty(u)}{\log(e+\left|x\right|)}.
\]
The corresponding classes are denoted by $C^{\log}_{\rm loc}$ and
$C^{\log}$. We write $p\in\mathcal P_0^{\log}$ if $p\in\mathcal P_0$
and $1/p\in C^{\log}$.

Let $p,q\in\mathcal P_0$ and let $(u_k)_{k\in\mathbb N_0}$ be a
sequence of measurable functions on $\mathbb R^n$. The mixed space
$L^{p(\cdot)}(\ell^{q(\cdot)})$ consists of those sequences for which
\[
x\longmapsto\left(\sum_{k=0}^\infty \left|u_k(x)\right|^{q(x)}\right)^{1/q(x)}
\]
belongs to $L^{p(\cdot)}(\mathbb R^n)$. Its quasi-norm is
\[
\left\|(u_k)_{k\in\mathbb N_0}\right\|_{L^{p(\cdot)}(\ell^{q(\cdot)})}
=\left\|\left(\sum_{k=0}^\infty\left|u_k(\cdot)\right|^{q(\cdot)}\right)^{1/q(\cdot)}
\right\|_{L^{p(\cdot)}}.
\]
The mixed space $\ell^{q(\cdot)}(L^{p(\cdot)})$ is defined by the
modular
\[
\varrho_{\ell^{q(\cdot)}(L^{p(\cdot)})}((u_k)_{k\in\mathbb N_0})
=\sum_{k=0}^\infty\inf\left\{\lambda_k>0:
\varrho_{p(\cdot)}\left(u_k/\lambda_k^{1/q(\cdot)}\right)\leq1\right\}.
\]
It consists of the sequences for which this modular is finite after
multiplication by a positive constant. Its Luxemburg quasi-norm is
\[
\left\|(u_k)_{k\in\mathbb N_0}\right\|_{\ell^{q(\cdot)}(L^{p(\cdot)})}
=\inf\left\{\mu>0:
\varrho_{\ell^{q(\cdot)}(L^{p(\cdot)})}((u_k/\mu)_{k\in\mathbb N_0})\leq1\right\}.
\]

We use the Fourier transform
\[
\widehat f(\xi)=\int_{\mathbb R^n}e^{-2\pi ix\cdot\xi}f(x)\,\mathrm d x
\]
and its usual extension to the Schwartz space $\mathcal S(\mathbb R^n)$
and the tempered distributions $\mathcal S'(\mathbb R^n)$.
Choose $\psi\in C_c^\infty(\mathbb R^n)$, radial, with $0\leq\psi\leq1$,
strictly decreasing as a function of $\left|\xi\right|$ on $1<\left|\xi\right|<2$, and with
$\psi=1$ on $\{\left|\xi\right|\leq1\}$ and $\psi=0$ on $\{\left|\xi\right|\geq2\}$.
Define the Schwartz functions $\varphi_k$ by
\[
\widehat\varphi_0(\xi)=\psi(\xi),\qquad
\widehat\varphi_k(\xi)=\psi(2^{-k}\xi)-\psi(2^{1-k}\xi),\quad k\geq1.
\]
Then $\sum_{k=0}^{\infty}\widehat\varphi_k=1$ and
\[
\operatorname{supp}\widehat\varphi_k
\subset\left\{\xi:2^{k-1}\leq\left|\xi\right|\leq2^{k+1}\right\},\quad k\geq1.
\]
The dyadic components of $f\in\mathcal S'$ are written explicitly as
$(\varphi_k*f)=\mathcal F^{-1}(\widehat\varphi_k\widehat f)$.
For $k\in\mathbb N_0$, we have
\[
\sum_{j=0}^k\varphi_j*f
=\mathcal F^{-1}\left(\psi(2^{-k}\cdot)\widehat f\right),\qquad k\geq0.
\]
\begin{definition}\label{def:spaces}
Let $p,q\in\mathcal P_0^{\log}$ and
$s\in C^{\log}_{\rm loc}\cap L^\infty$. Write
$s^-=\operatorname*{ess\,inf}_{x\in\mathbb R^n}s(x)$ and
$s^+=\operatorname*{ess\,sup}_{x\in\mathbb R^n}s(x)$. The variable exponent
Triebel--Lizorkin and Besov spaces consist of the distributions
$f\in\mathcal S'$ for which the respective quasi-norm is finite:
\[
\begin{split}
\left\|f\right\|_{F^{s(\cdot)}_{p(\cdot),q(\cdot)}}
&=\left\|(2^{ks(\cdot)}(\varphi_k*f))_{k\in\mathbb N_0}\right\|_{L^{p(\cdot)}(\ell^{q(\cdot)})},\\
\left\|f\right\|_{B^{s(\cdot)}_{p(\cdot),q(\cdot)}}
&=\left\|(2^{ks(\cdot)}(\varphi_k*f))_{k\in\mathbb N_0}\right\|_{\ell^{q(\cdot)}(L^{p(\cdot)})}.
\end{split}
\]
\end{definition}

The spaces in Definition~\ref{def:spaces} are independent of the
choice of the admissible dyadic resolution $(\varphi_k)_{k\in\mathbb N_0}$, in the
sense that different admissible choices give equivalent quasi-norms.
See \cite[Corollary~3.2]{AC}, applied to the weight sequence
$w_k(x)=2^{ks(x)}$; the finite upper bounds on $p$ and $q$ required
for the Triebel--Lizorkin case hold by the definition of $\mathcal P_0$.

Choose constants $c_s,c_q>0$ such that, for $x\ne y$,
\[
\left|s(x)-s(y)\right|\leq\frac{c_s}{\log(e+1/\left|x-y\right|)},\qquad
\left|\frac{1}{q(x)}-\frac{1}{q(y)}\right|
\leq\frac{c_q}{\log(e+1/\left|x-y\right|)}.
\]
After increasing $c_s$ if necessary, we also have
\[
2^{ks(x)}\leq C2^{ks(y)}(1+2^k\left|x-y\right|)^{c_s},\qquad k\geq0.
\]
Here $C>0$ is independent of $x$, $y$, and $k$.

\subsection{Nonregular bilinear symbols}

For any $d\in\mathbb N$, the notation
$(\varphi^{(d)}_j)_{j\in\mathbb N_0}$ denotes a dyadic resolution
constructed in $\mathbb R^d$ in the same way as above; when $d=n$,
we use $\varphi^{(n)}_j=\varphi_j$.
We denote convolution in the spatial variable $x$ and in the joint
frequency variables $(\xi,\eta)$ by $*_x$ and $*_{(\xi,\eta)}$,
respectively. For a measurable symbol $b:\mathbb R^{3n}\to\mathbb C$,
these convolutions are given by the following formulas whenever the
integrals converge absolutely:
\[
\begin{gathered}
(\varphi_\ell*_x b)(x,\xi,\eta)
=\int_{\mathbb R^n}\varphi_\ell(x-y)b(y,\xi,\eta)\,\mathrm d y,\\
\left(\varphi_j^{(2n)}*_{(\xi,\eta)}b\right)(x,\xi,\eta)
=\int_{\mathbb R^n}\int_{\mathbb R^n}
\varphi_j^{(2n)}(\xi-u,\eta-v)b(x,u,v)\,\mathrm d u\,\mathrm d v,
\end{gathered}
\]
For tempered distributions, we use distributional convolution.
For $N>0$ and $1\leq\lambda\leq2$, the ordinary nonhomogeneous
Besov norm in the joint frequency variables $(\xi,\eta)\in\mathbb R^{2n}$ is
\[
\left\|u\right\|_{B^N_{\lambda,\infty}(\mathbb R^{2n})}
=\sup_{j\in\mathbb N_0}2^{jN}
\left\|\varphi^{(2n)}_j*_{(\xi,\eta)}u\right\|_{L^\lambda(\mathbb R^{2n})},
\qquad u\in\mathcal S'(\mathbb R^{2n}).
\]

For a Banach space $E$ and $d\in\mathbb N$, we denote by
$\mathcal S'(\mathbb R^d;E)$ the continuous linear maps from
$\mathcal S(\mathbb R^d)$ to $E$. If $b\in\mathcal S'(\mathbb R^d;E)$,
its spatial dyadic components are defined by
\[
(\varphi^{(d)}_\ell*b)(x)
=b\!\left(\varphi^{(d)}_\ell(x-\cdot)\right),
\qquad x\in\mathbb R^d,\quad \ell\in\mathbb N_0.
\]
For $d=n$, we denote this convolution by $*_x$, as above.
Each component is a smooth $E$-valued function, and its $E$-norm is
therefore measurable.

\begin{definition}\label{def:mixed-symbol}
Let $r>0$, $0<\mu,v\leq\infty$, $1\leq\lambda\leq2$, and $N>0$.
Put $E=B^N_{\lambda,\infty}(\mathbb R^{2n})$, and let
$\varphi_\ell=\varphi_\ell^{(n)}$ be the dyadic kernels on
$\mathbb R^n$ fixed above. For $b\in\mathcal S'(\mathbb R^n;E)$, define
\[
\begin{split}
\left\|b\right\|_{B^r_{\mu,v}(B^N_{\lambda,\infty})}
&=\left\|\left(2^{\ell r}
\left\|(\varphi_\ell*_x b)(x,\cdot,\cdot)\right\|_{B^N_{\lambda,\infty}(\mathbb R^{2n})}
\right)_{\ell\in\mathbb N_0}\right\|_{\ell^v(L^\mu)},\\
\left\|b\right\|_{F^r_{\mu,v}(B^N_{\lambda,\infty})}
&=\left\|\left(2^{\ell r}
\left\|(\varphi_\ell*_x b)(x,\cdot,\cdot)\right\|_{B^N_{\lambda,\infty}(\mathbb R^{2n})}
\right)_{\ell\in\mathbb N_0}\right\|_{L^\mu(\ell^v)}.
\end{split}
\]
The corresponding mixed spaces consist of the distributions for which
the displayed quasi-norm is finite, with the usual modifications when
$\mu=\infty$ or $v=\infty$. In the Triebel--Lizorkin case we require
$\mu<\infty$. Here $*_x$ is the distributional convolution defined above.
\end{definition}

Choose a radial function $\Psi\in C_c^\infty(\mathbb R^{2n})$ satisfying
\[
0\leq\Psi\leq1,\qquad
\Psi(\xi,\eta)=
\begin{cases}
1,& \left|(\xi,\eta)\right|\leq1,\\
0,& \left|(\xi,\eta)\right|\geq2,
\end{cases}
\qquad \left|(\xi,\eta)\right|=(\left|\xi\right|^2+\left|\eta\right|^2)^{1/2},
\]
and define
\[
\chi_0(\xi,\eta)=\Psi(\xi,\eta),\qquad
\chi_k(\xi,\eta)=\Psi(2^{-k}\xi,2^{-k}\eta)
-\Psi(2^{1-k}\xi,2^{1-k}\eta),\quad k\geq1.
\]
For a measurable symbol $a:\mathbb R^{3n}\to\mathbb C$, write
\begin{equation}\label{eq:profile}
b_k(x,\xi,\eta)=a(x,2^k\xi,2^k\eta)
\chi_k(2^k\xi,2^k\eta),\qquad k\geq0.
\end{equation}
For every $x\in\mathbb R^n$,
\[
\begin{split}
\operatorname{supp}b_0(x,\cdot,\cdot)
&\subset\left\{(\xi,\eta)\in\mathbb R^{2n}:\left|(\xi,\eta)\right|\leq2\right\},\\
\operatorname{supp}b_k(x,\cdot,\cdot)
&\subset\left\{(\xi,\eta)\in\mathbb R^{2n}:1/2\leq\left|(\xi,\eta)\right|\leq2\right\},
\quad k\geq1.
\end{split}
\]

\begin{definition}\label{def:symbol}
Let $m\in\mathbb R$, $0\leq\delta<1$, $0<\mu,v\leq\infty$,
$1\leq\lambda\leq2$, $r>n/\mu$, and $N>2n/\lambda$.
Let $a:\mathbb R^{3n}\to\mathbb C$ be measurable, and let
\[
b_k(x,\xi,\eta)=a(x,2^k\xi,2^k\eta)\chi_k(2^k\xi,2^k\eta),
\qquad k\in\mathbb N_0,
\]
as in \eqref{eq:profile}. The class
$BS\mathcal B^m_\delta(r,\mu,v;N,\lambda)$ consists of such symbols
$a$ satisfying
\begin{gather}
\sup_{k\geq0}2^{-km}
\mathop{\rm ess\,sup}_{x\in\mathbb R^n}
\left\|b_k(x,\cdot,\cdot)\right\|_{B^N_{\lambda,\infty}(\mathbb R^{2n})}<\infty,
\label{eq:symbol-uniform}\\*
\sup_{k\geq0}2^{-k(m+\delta r)}
\left\|b_k\right\|_{B^r_{\mu,v}(B^N_{\lambda,\infty})}<\infty.
\label{eq:symbol-spatial}
\end{gather}
For $0<\mu<\infty$, the class
$BS\mathcal F^m_\delta(r,\mu,v;N,\lambda)$ is defined by
\eqref{eq:symbol-uniform} and the condition obtained from
\eqref{eq:symbol-spatial} by replacing $B^r_{\mu,v}$ with $F^r_{\mu,v}$.
The respective quasi-norms are
\[
\left\|a\right\|_{BS\mathcal B^m_\delta(r,\mu,v;N,\lambda)}
=\sup_{k\geq0}2^{-km}\mathop{\rm ess\,sup}_{x}
\left\|b_k(x,\cdot,\cdot)\right\|_{B^N_{\lambda,\infty}}
+\sup_{k\geq0}2^{-k(m+\delta r)}
\left\|b_k\right\|_{B^r_{\mu,v}(B^N_{\lambda,\infty})},
\]
\[
\left\|a\right\|_{BS\mathcal F^m_\delta(r,\mu,v;N,\lambda)}
=\sup_{k\geq0}2^{-km}\mathop{\rm ess\,sup}_{x}
\left\|b_k(x,\cdot,\cdot)\right\|_{B^N_{\lambda,\infty}}
+\sup_{k\geq0}2^{-k(m+\delta r)}
\left\|b_k\right\|_{F^r_{\mu,v}(B^N_{\lambda,\infty})}.
\]
\end{definition}

We verify that the mixed quasi-norms in Definition~\ref{def:symbol}
are well-defined for the functions $b_k$ associated with a measurable
symbol $a$. Assume \eqref{eq:symbol-uniform}, and put
$E=B^N_{\lambda,\infty}(\mathbb R^{2n})$.
Since $N>2n/\lambda$, Bernstein's inequality gives, for $u\in E$,
\[
\begin{aligned}
&\left\|u\right\|_{L^\infty(\mathbb R^{2n})}
\leq\sum_{j=0}^{\infty}
\left\|\varphi_j^{(2n)}*_{(\xi,\eta)}u\right\|_{L^\infty(\mathbb R^{2n})}
\lesssim\sum_{j=0}^{\infty}2^{2nj/\lambda}
\left\|\varphi_j^{(2n)}*_{(\xi,\eta)}u\right\|_{L^\lambda(\mathbb R^{2n})}\\
&\quad \leq\left\|u\right\|_E\sum_{j=0}^{\infty}2^{-j(N-2n/\lambda)}
\lesssim\left\|u\right\|_E.
\end{aligned}
\]
The dyadic series converges uniformly, so $u$ has a bounded continuous
representative. We use these representatives for
$b_k(x,\cdot,\cdot)$ for almost every $x$.
For $\omega\in\mathcal S(\mathbb R^n)$, define $b_k(\omega)$ by
the scalar integral
\[
b_k(\omega)(\xi,\eta)
=\int_{\mathbb R^n}b_k(x,\xi,\eta)\omega(x)\,\mathrm d x.
\]
We show that this function belongs to $E$. Minkowski's inequality gives
\[
\begin{aligned}
&\left\|b_k(\omega)\right\|_E
=\sup_{j\in\mathbb N_0}2^{jN}
\left\|\varphi_j^{(2n)}*_{(\xi,\eta)}b_k(\omega)\right\|_{L^\lambda}
\leq\sup_{j\in\mathbb N_0}
\int_{\mathbb R^n}\left|\omega(x)\right|2^{jN}
\left\|\varphi_j^{(2n)}*_{(\xi,\eta)}b_k(x,\cdot,\cdot)\right\|_{L^\lambda}
\,\mathrm d x\\
&\quad \leq\int_{\mathbb R^n}\left|\omega(x)\right|
\left\|b_k(x,\cdot,\cdot)\right\|_E\,\mathrm d x
\leq\left\|\omega\right\|_{L^1}
\mathop{\rm ess\,sup}_{x\in\mathbb R^n}
\left\|b_k(x,\cdot,\cdot)\right\|_E<\infty.
\end{aligned}
\]
Thus $\omega\mapsto b_k(\omega)$ is a continuous linear map from
$\mathcal S(\mathbb R^n)$ to $E$. We identify $b_k$ with this
$E$-valued tempered distribution.
Moreover, for $\ell\in\mathbb N_0$,
\[
(\varphi_\ell*_x b_k)(x,\xi,\eta)
=\left[b_k\!\left(\varphi_\ell(x-\cdot)\right)\right](\xi,\eta)
=\int_{\mathbb R^n}\varphi_\ell(x-y)b_k(y,\xi,\eta)\,\mathrm d y.
\]
Consequently, the distributional convolution agrees with the scalar
integral, and both mixed quasi-norms in Definition~\ref{def:symbol}
are well-defined.

For either $\mathcal A=\mathcal B$ or $\mathcal A=\mathcal F$,
\eqref{eq:symbol-uniform} implies
\[
\left|a(x,\xi,\eta)\right|\lesssim\left\|a\right\|_{BS\mathcal A^m_\delta(r,\mu,v;N,\lambda)}
(1+\left|\xi\right|+\left|\eta\right|)^m
\]
for almost every $(x,\xi,\eta)\in\mathbb R^{3n}$.
Indeed, the embedding $B^N_{\lambda,\infty}\hookrightarrow L^\infty$
and \eqref{eq:symbol-uniform} give
$\left\|b_k(x,\cdot,\cdot)\right\|_{L^\infty(\mathbb R^{2n})}\lesssim
2^{km}\left\|a\right\|_{BS\mathcal A^m_\delta(r,\mu,v;N,\lambda)}$ for almost every $x$.
Since $\sum_{k\geq0}\chi_k=1$, we have
\[
a(x,\xi,\eta)=\sum_{k\geq0}b_k(x,2^{-k}\xi,2^{-k}\eta).
\]
At each $(\xi,\eta)$, only a uniformly bounded number of terms are
nonzero, and $2^k\asymp1+\left|\xi\right|+\left|\eta\right|$ for these terms, which proves
the estimate.
The associated operator is therefore defined on
$\mathcal S(\mathbb R^n)\times\mathcal S(\mathbb R^n)$ by
\begin{equation}\label{eq:operator}
T_a(f,g)(x)=\int_{\mathbb R^{2n}}
e^{2\pi ix\cdot(\xi+\eta)}a(x,\xi,\eta)
\widehat f(\xi)\widehat g(\eta)\,\mathrm d\xi\,\mathrm d\eta.
\end{equation}
The integral converges absolutely for almost every $x\in\mathbb R^n$.

\subsection{Main results}

\begin{theorem}\label{thm:triebel}
Let $p,p_1,p_2,q\in\mathcal P_0^{\log}$ satisfy
$p^-,p_1^-,p_2^-,q^->1$ and
\[
\frac1{p(x)}=\frac1{p_1(x)}+\frac1{p_2(x)},\qquad x\in\mathbb R^n.
\]
Let $s\in C^{\log}_{\rm loc}\cap L^\infty$, $m\in\mathbb R$,
$r>0$, $1\leq\mu\leq\infty$, $0<v\leq\infty$, $1\leq\lambda\leq2$,
$0\leq\delta<1$, and
\begin{equation}\label{eq:smoothness-range}
(1-\delta)r\geq\frac n\mu,
\qquad 0<s^-\leq s^+<r-\frac n\mu.
\end{equation}
Let $c_s,c_q$ be the constants fixed in Section~\ref{subsec:variable-spaces},
and let $N$ satisfy
\begin{equation}\label{eq:frequency-threshold}
N>\frac{2n}{\lambda}+2(n+c_s+c_q).
\end{equation}
Let $\mathcal A\in\{\mathcal B,\mathcal F\}$, with $\mu<\infty$
when $\mathcal A=\mathcal F$. If
$a\in BS\mathcal A^m_\delta(r,\mu,v;N,\lambda)$, then
\begin{equation}\label{eq:triebel-main}
\left\|T_a(f,g)\right\|_{F^{s(\cdot)}_{p(\cdot),q(\cdot)}}
\leq C\left\|a\right\|_{BS\mathcal A^m_\delta(r,\mu,v;N,\lambda)}\Bigl(
\left\|f\right\|_{F^{s(\cdot)+m}_{p_1(\cdot),q(\cdot)}}\left\|g\right\|_{L^{p_2(\cdot)}}+\left\|f\right\|_{L^{p_1(\cdot)}}
\left\|g\right\|_{F^{s(\cdot)+m}_{p_2(\cdot),q(\cdot)}}\Bigr)
\end{equation}
for all $f,g\in\mathcal S(\mathbb R^n)$. Moreover, one has
\begin{equation}\label{eq:besov-main}
\left\|T_a(f,g)\right\|_{B^{s(\cdot)}_{p(\cdot),q(\cdot)}}
\leq C\left\|a\right\|_{BS\mathcal A^m_\delta(r,\mu,v;N,\lambda)}\Bigl(
\left\|f\right\|_{B^{s(\cdot)+m}_{p_1(\cdot),q(\cdot)}}\left\|g\right\|_{L^{p_2(\cdot)}}+\left\|f\right\|_{L^{p_1(\cdot)}}
\left\|g\right\|_{B^{s(\cdot)+m}_{p_2(\cdot),q(\cdot)}}\Bigr)
\end{equation}
for all $f,g\in\mathcal S(\mathbb R^n)$.
The constant $C$ is independent of $a,f,g$.
\end{theorem}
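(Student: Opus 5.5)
We will decompose the symbol both in the joint frequency and in the spatial variable, and reduce the operator to a sum of paraproduct-type pieces whose output frequencies are controlled. Write $a=\sum_k a_k$ with $a_k(x,\xi,\eta)=b_k(x,2^{-k}\xi,2^{-k}\eta)$, and split further $a_k=\sum_{\ell\geq0}\varphi_\ell*_x a_k$. Splitting the joint annulus $|(\xi,\eta)|\asymp2^k$ according to which of $|\xi|,|\eta|$ is of size $2^k$ (using $\psi(2^{-k+3}\xi)$-type cutoffs on the smaller variable), we obtain two families: in one $\widehat f$ is localized to $|\xi|\asymp2^k$ and $\widehat g$ to $|\eta|\lesssim2^k$, and symmetrically. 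It suffices to treat the first family, which produces the term $\|f\|_{F^{s+m}_{p_1,q}}\|g\|_{L^{p_2}}$. For fixed $(k,\ell)$, we expand $b_k(x,\cdot,\cdot)$ in a Fourier series on a cube of side $\sim 8$ (or use the $B^N_{\lambda,\infty}$ decomposition $\varphi^{(2n)}_j*b_k$ and write the inverse Fourier transform in $(\xi,\eta)$). This gives the representation
\[
T_{a_k^{\ell}}(f,g)(x)=\int_{\mathbb R^{2n}}K_{k,\ell}(x,y,z)\,\tilde f_k(x-2^{-k}y)\,\tilde g_k(x-2^{-k}z)\,dy\,dz,
\]
where $\tilde f_k$ is a frequency-localized piece of $f$ at scale $2^k$, $\tilde g_k$ is $\sum_{j\le k+c}\varphi_j*g$, and $K_{k,\ell}(x,\cdot,\cdot)$ is the inverse Fourier transform in $(\xi,\eta)$ of $(\varphi_\ell*_xb_k)(x,\cdot,\cdot)$.

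The key pointwise estimate bounds $|T_{a_k^\ell}(f,g)(x)|$ by $\|\varphi_\ell*_xb_k(x)\|_{B^N_{\lambda,\infty}}$ times Peetre maximal functions $\tilde f_k^{*,\sigma}(x)\,\tilde g_k^{*,\sigma}(x)$ with $\sigma$ slightly larger than $n+c_s+c_q$. The decay needed to absorb the weights $(1+|y|)^{\sigma}(1+|z|)^{\sigma}$ comes from Hausdorff--Young ($\lambda\le2$) applied to each dyadic piece $\varphi_j^{(2n)}*b$: its Fourier transform lives where $|(y,z)|\asymp2^j$, with $L^{\lambda'}$ norm $\lesssim2^{-jN}$, and Hölder then gives $\sum_j2^{-j(N-2n/\lambda-2\sigma)}<\infty$. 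This is exactly the condition \eqref{eq:frequency-threshold}. Since $\widehat{a_k^\ell}$ is supported in $|x\text{-freq}|\lesssim2^\ell$, the output $T_{a_k^\ell}(f,g)$ has frequency in $\{|\zeta|\lesssim2^{\max(k,\ell)}\}$. It is in fact comparable to $2^k$ when $\ell\le k-c$, since the product of an annulus-$2^k$ piece with a ball-$2^{k-3}$ piece and an $x$-modulation of size $2^{k-c}$ stays in an annulus.

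We split into $\ell\le k-c$ and $\ell>k-c$. In the diagonal case $\ell\le k-c$ we use \eqref{eq:symbol-uniform}: $\sum_{\ell\le k}\varphi_\ell*_xb_k$ is a partial sum, so its $E$-norm is $\lesssim2^{km}$ uniformly. Then $2^{ks(x)}|\varphi_{k'}*T(\cdot)|$ with $|k'-k|\le c$ is controlled by $2^{k(s(x)+m)}\tilde f_k^{*}(x)\,\mathcal M g(x)$. The variable-exponent Peetre maximal inequality (from \cite{AC,n}) together with Hölder in $L^{p(\cdot)}=L^{p_1}\cdot L^{p_2}$ and the boundedness of $\mathcal M$ on $L^{p_2(\cdot)}$ ($p_2^->1$) then yields the claim in both the $F$ and $B$ scales.

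The main obstacle is the off-diagonal part $\ell>k-c$, where output frequencies reach $2^\ell$ rather than $2^k$, so for output level $j$ we must sum over $k\le j+c$ with $\ell\approx j$. Here we use \eqref{eq:symbol-spatial}, $\|\varphi_\ell*_xb_k\|_E$ weighted by $2^{\ell r}$ in $\ell^v(L^\mu)$, with bound $2^{k(m+\delta r)}$. We estimate
\[
2^{js(x)}|\cdot|\lesssim\sum_{k\le j}2^{(j-k)s(x)}2^{-jr}\bigl[2^{jr}\|\varphi_j*_xb_k(x)\|_E\bigr]\,2^{ks(x)}\tilde f_k^*(x)\,\mathcal Mg(x),
\]
and use $s^+<r-n/\mu$ to get geometric decay $2^{-(j-k)(r-n/\mu-s^+)}$. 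The $L^\mu$ norm in $x$ of the symbol factor is converted to a sup bound at cost $2^{jn/\mu}$, via Nikol'skii's inequality for the frequency-$2^j$ function $\varphi_j*_xb_k$. This produces $2^{k(m+\delta r)}2^{-j(r-n/\mu)}$. Since $(1-\delta)r\ge n/\mu$, this is at most $2^{km}2^{-(j-k)(r-n/\mu)}$ after writing $\delta rk-jr+jn/\mu\le-(j-k)(r-n/\mu)-k((1-\delta)r-n/\mu)$. The remaining sum over $k$ is a discrete convolution with an exponentially decaying kernel. It is handled in $L^{p(\cdot)}(\ell^{q(\cdot)})$ and $\ell^{q(\cdot)}(L^{p(\cdot)})$ by the variable-exponent convolution lemmas of \cite{AC,DHR}, which is where log-Hölder continuity of $s$ and $q$, and the constants $c_s,c_q$ already built into $\sigma$, are needed. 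The symmetric family gives the second term, completing \eqref{eq:triebel-main} and \eqref{eq:besov-main}.
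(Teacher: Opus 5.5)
There is a genuine gap in how you control the Fourier support of the output pieces. In the diagonal part you use that the output lies in an annulus $\{|\omega|\asymp2^k\}$, so that only $|k'-k|\le c$ contribute to $\varphi_{k'}*(\cdot)$. In the off-diagonal part you use that, for output level $j$, only $\ell\approx j$ and $k\lesssim j$ occur. Both claims need a strict separation of frequencies, and your decomposition does not provide it.

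\emph{Diagonal part.} The annulus argument needs $|\eta|\le2^{k-3}$. But your first family has $\widehat g$ localized to $|\eta|\lesssim2^k$, with $\tilde g_k=\sum_{j\le k+c}\varphi_j*g$. It therefore contains the high--high interaction $|\xi|\asymp|\eta|\asymp2^k$, where $\xi+\eta$, and hence $\omega=\zeta+\xi+\eta$, can be arbitrarily close to $0$. If you instead make the first family strictly low--high, the high--high region falls into the other family, which is then no longer symmetric.

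\emph{Off-diagonal part.} The output of $T_{a_k^\ell}$ lies in an annulus of size $2^\ell$ only when $\ell\ge k+c'$. For $|\ell-k|\le c'$, the spatial frequency $|\zeta|\asymp2^\ell$ can cancel $\xi+\eta$, so these terms feed every output level $j\lesssim k$.

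For pieces supported only in balls $\{|\omega|\le C_02^k\}$, you cannot localize $\varphi_j*(\cdot)$ to finitely many $k$. You need the ball synthesis estimate of Lemma~\ref{lem:balls}, and that is exactly where the hypothesis $s^->0$ enters. Your argument never uses $s^->0$, which shows something is missing. Take $a\equiv1\in BS\mathcal B^0_0(r,\infty,\infty;N,\lambda)$ and constant $s<0$, with $m=0$ and $\mu=\infty$; every other hypothesis you use is then satisfied. Put $f=e^{2\pi i2^kx_1}\phi$ and $g=e^{-2\pi i2^kx_1}\phi$, with $\widehat\phi$ compactly supported and $\phi\neq0$. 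Then $T_a(f,g)=\phi^2$ for every $k$, while the right-hand side is $O(2^{ks})\to0$, so the estimate is false.

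The repair is the paper's bookkeeping, which never needs annuli. Your joint-annulus splitting of the symbol can be kept, as can the paper's input splitting $T_a(\varphi_k*f,\sum_{j\le k}\varphi_j*g)$, which automatically contains the high--high terms. What must change is how the pieces are grouped: by the radius of their output ball.
\begin{itemize}
\item $h_k$ carries the spatially low-pass symbol $\sum_{\nu\le k}\varphi_\nu*_xc_k$, and its output lies in $\{|\omega|\le6\cdot2^k\}$.
\item $H_\ell=\sum_{k<\ell}T_{c_{\ell,k}}(\cdots)$ has output in $\{|\omega|\le4\cdot2^\ell\}$.
\end{itemize}
Then apply Lemma~\ref{lem:balls} to $\sum_kh_k$ and to $\sum_\ell H_\ell$, in both the $F$ and $B$ scales.

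With that change the rest of your argument goes through as you describe:
\begin{itemize}
\item the pointwise kernel bound via Hausdorff--Young;
\item the Nikol'skii step giving $2^{k(m+\delta r)}2^{-\ell(r-n/\mu)}$;
\item the exponent arithmetic using $(1-\delta)r\ge n/\mu$ and $s^+<r-n/\mu$;
\item the discrete convolution lemma.
\end{itemize}
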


To the best of the authors' knowledge, the estimates in
Theorem~\ref{thm:triebel} for the nonregular symbol classes considered here
are new even for the classical Besov and
Triebel--Lizorkin spaces with constant exponents.

\begin{corollary}\label{cor:extension}
Under the assumptions of Theorem~\ref{thm:triebel}, suppose in addition
that $s^-+m>0$. Then $T_a$ extends uniquely to continuous bilinear maps
\[
\begin{split}
F^{s(\cdot)+m}_{p_1(\cdot),q(\cdot)}
\times F^{s(\cdot)+m}_{p_2(\cdot),q(\cdot)}
&\longrightarrow F^{s(\cdot)}_{p(\cdot),q(\cdot)},\\
B^{s(\cdot)+m}_{p_1(\cdot),q(\cdot)}
\times B^{s(\cdot)+m}_{p_2(\cdot),q(\cdot)}
&\longrightarrow B^{s(\cdot)}_{p(\cdot),q(\cdot)}.
\end{split}
\]
The extensions satisfy \eqref{eq:triebel-main} and \eqref{eq:besov-main}.
\end{corollary}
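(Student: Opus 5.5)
The plan is to prove Corollary~\ref{cor:extension} by a density argument. The estimates \eqref{eq:triebel-main} and \eqref{eq:besov-main} are already available on $\mathcal S\times\mathcal S$, so the work reduces to three things: embeddings of the input spaces into variable Lebesgue spaces, density of $\mathcal S$ (or a substitute for it), and bilinear Cauchy-sequence bookkeeping.

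\textbf{Step 1 (embedding into $L^{p_i(\cdot)}$).} Since $s^-+m>0$, $p_i^->1$ and $q^->1$, I will show
\[
F^{s(\cdot)+m}_{p_i(\cdot),q(\cdot)}\hookrightarrow L^{p_i(\cdot)},\qquad
B^{s(\cdot)+m}_{p_i(\cdot),q(\cdot)}\hookrightarrow L^{p_i(\cdot)}.
\]
For the Besov case, write $f=\sum_k\varphi_k*f$. Because $s(x)+m\geq\sigma:=s^-+m>0$, we have $\left\|\varphi_k*f\right\|_{L^{p_i(\cdot)}}\leq 2^{-k\sigma}\left\|2^{k(s(\cdot)+m)}\varphi_k*f\right\|_{L^{p_i(\cdot)}}$. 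Each factor $\left\|2^{k(s(\cdot)+m)}\varphi_k*f\right\|_{L^{p_i(\cdot)}}$ is bounded by a constant times the $\ell^{q(\cdot)}(L^{p_i(\cdot)})$ norm, since every single term of a mixed $\ell^{q(\cdot)}$ norm is controlled by the whole norm. Summing the geometric series in the Banach space $L^{p_i(\cdot)}$ (recall $p_i^->1$) then gives the bound.

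For the Triebel--Lizorkin case, the pointwise estimate $\sum_k|\varphi_k*f(x)|\leq\sum_k 2^{-k\sigma}2^{k(s(x)+m)}|\varphi_k*f(x)|\lesssim\left(\sum_k|2^{k(s(x)+m)}\varphi_k*f(x)|^{q(x)}\right)^{1/q(x)}$ follows from H\"older's inequality in $k$, using $q^+<\infty$ and $\sigma>0$. The embedding then follows in the same way.

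In both cases the series $\sum_k\varphi_k*f$ converges in $L^{p_i(\cdot)}$ and in $\mathcal S'$ to $f$. Hence each $f$ in these spaces is a function, and the right-hand sides of the two estimates are controlled by $\left\|a\right\|\,\left\|f\right\|\left\|g\right\|$ in the product norms.

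\textbf{Step 2 (density).} This is the main obstacle. For $F$ spaces, and for $B$ spaces with $q^+<\infty$ (which holds because $q\in\mathcal P_0$), I will use the fact that $\mathcal S$ is dense in $F^{s(\cdot)+m}_{p_i(\cdot),q(\cdot)}$ and $B^{s(\cdot)+m}_{p_i(\cdot),q(\cdot)}$ when $p^+,q^+<\infty$. This is standard in the variable setting, via truncations $\sum_{k\leq K}\varphi_k*f$ followed by a spatial cutoff, and is covered by the framework of \cite{AC,c,DHR}.

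To verify it, I would first show that $S_Kf:=\sum_{k\leq K}\varphi_k*f\to f$ in norm. Only indices $k\geq K-1$ survive in $f-S_Kf$, so the tail of the modular tends to zero by dominated convergence, which uses $q^+<\infty$ and $p^+<\infty$. Next I would approximate $S_Kf$, a smooth function of bounded frequency lying in $L^{p_i(\cdot)}$, by $\theta(\epsilon\cdot)S_Kf$ with $\theta\in C_c^\infty$ and $\theta(0)=1$. Convergence here follows from the convolution and maximal estimates the paper quotes from \cite{AC}.

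\textbf{Step 3 (extension).} For $(f,g)$ in the product space, choose $f_j\to f$ and $g_j\to g$ with $f_j,g_j\in\mathcal S$. Bilinearity gives
\[
T_a(f_j,g_j)-T_a(f_l,g_l)=T_a(f_j-f_l,g_j)+T_a(f_l,g_j-g_l).
\]
Applying \eqref{eq:triebel-main} to each term, together with Step 1 and the boundedness of $(f_j)$ and $(g_j)$, shows that $(T_a(f_j,g_j))$ is Cauchy in $F^{s(\cdot)}_{p(\cdot),q(\cdot)}$. This space is complete, since it is a quasi-Banach space by the standard argument, so the sequence has a limit. The same identity shows the limit does not depend on the chosen sequences.

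Passing to the limit in \eqref{eq:triebel-main} yields the estimate for the extension. This uses continuity of the quasi-norms and Step 1, which guarantees that $f_j\to f$ in $L^{p_1(\cdot)}$ as well. Uniqueness follows from density together with continuity. The Besov case is identical, using \eqref{eq:besov-main}.
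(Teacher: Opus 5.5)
Your proposal is correct and takes essentially the same route as the paper. You embed the input spaces into $L^{p_i(\cdot)}$ using $s^-+m>0$, approximate by Schwartz functions, and run the bilinear Cauchy argument in the complete target space. The paper cites Lemma~\ref{lem:positive-embedding} for the embedding and \cite[Corollary~5.2]{ACatoms} for density of $\mathcal S$; you instead prove the embedding directly (correctly, via the geometric series in the Besov case and H\"older's inequality in $k$ in the Triebel--Lizorkin case) and only sketch density, for which \cite[Corollary~5.2]{ACatoms} is the precise reference.
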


Let $X$ be a quasi-Banach space of measurable functions on $\mathbb R^n$.
Its pointwise multiplier algebra $M(X)$ consists of the measurable
functions $h$ for which multiplication by $h$ maps $X$ boundedly into
itself, with
\[
\left\|h\right\|_{M(X)}
:=\sup_{\substack{f\in X\\\left\|f\right\|_X\leq1}}
\left\|hf\right\|_X<\infty.
\]
We call $X$ a multiplication algebra if $X\hookrightarrow M(X)$, that is,
if there exists $C>0$ such that
\[
\left\|fg\right\|_X\leq C\left\|f\right\|_X\left\|g\right\|_X,
\qquad f,g\in X.
\]
Classical multiplication algebra results for Besov and Triebel--Lizorkin
spaces can be found in \cite[Section~2.8.3, p.~145]{T83}.

For variable integrability, Naibo and Thomson
\cite[Section~6.3, Theorem~6.4]{NT19} proved Leibniz-type estimates
with an $L^\infty$ factor in Triebel--Lizorkin spaces based on
$L^{p(\cdot)}$. Hale and Naibo \cite[Section~7, Corollary~7.1]{HN23}
obtained corresponding estimates for both Besov and Triebel--Lizorkin
spaces. However, in these results the smoothness and summation exponents
are constants, so they do not cover the spaces considered here with
both $s(\cdot)$ and $q(\cdot)$ variable.
We apply the dyadic argument for $T_a$ to the multiplication operator
$T_1(f,g)=fg$ and obtain the following result.

\begin{theorem}\label{thm:algebra}
Let $p,q\in\mathcal P_0^{\log}$ with $p^-,q^->1$, and let
$s\in C^{\log}_{\rm loc}\cap L^\infty$ satisfy
\[
\operatorname*{ess\,inf}_{x\in\mathbb R^n}
\left(s(x)-\frac n{p(x)}\right)>0.
\]
Then $F^{s(\cdot)}_{p(\cdot),q(\cdot)}$ and
$B^{s(\cdot)}_{p(\cdot),q(\cdot)}$ are multiplication algebras.
More precisely, there exists $C>0$, independent of $f$ and $g$, such that
\begin{equation}\label{eq:algebra-triebel}
\left\|fg\right\|_{F^{s(\cdot)}_{p(\cdot),q(\cdot)}}
\leq C\left\|f\right\|_{F^{s(\cdot)}_{p(\cdot),q(\cdot)}}
\left\|g\right\|_{F^{s(\cdot)}_{p(\cdot),q(\cdot)}}
\end{equation}
for all $f,g\in F^{s(\cdot)}_{p(\cdot),q(\cdot)}$, and
\begin{equation}\label{eq:algebra-besov}
\left\|fg\right\|_{B^{s(\cdot)}_{p(\cdot),q(\cdot)}}
\leq C\left\|f\right\|_{B^{s(\cdot)}_{p(\cdot),q(\cdot)}}
\left\|g\right\|_{B^{s(\cdot)}_{p(\cdot),q(\cdot)}}
\end{equation}
for all $f,g\in B^{s(\cdot)}_{p(\cdot),q(\cdot)}$.
\end{theorem}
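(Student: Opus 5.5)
The plan is to derive Theorem~\ref{thm:algebra} from the two-term Leibniz estimates in Theorem~\ref{thm:triebel}, applied to the constant symbol $a\equiv1$, together with an embedding of the variable space into $L^\infty$. Concretely, for $a\equiv1$ we have $T_1(f,g)=fg$. Its profiles are $b_k(x,\xi,\eta)=\chi_k(2^k\xi,2^k\eta)$, which do not depend on $x$ and are fixed Schwartz functions, uniformly in $k\geq1$. Hence $\varphi_\ell*_x b_k=0$ for $\ell\geq1$. With $m=0$, $\delta=0$, $\mu=\infty$, $v=\infty$ and any $r$, $N$, this gives $1\in BS\mathcal B^0_0(r,\infty,\infty;N,\lambda)$. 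The conditions \eqref{eq:smoothness-range} and \eqref{eq:frequency-threshold} are then met by choosing $r>s^+$ and $N$ large.

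Next I would take $p_1=p_2=\infty$, so that $p=p$; this is formally outside the stated hypotheses, since $p_i\in\mathcal P_0$ must be bounded. Two routes are available. The first is to rerun the dyadic argument of Theorem~\ref{thm:triebel} with $L^\infty$ factors. There, $\|f\|_{L^{p_1}}$ arises only through the maximal function of the low-frequency part $S_kf=\sum_{j\leq k}\varphi_j*f$, and $|S_kf|\lesssim\|f\|_{L^\infty}$ pointwise. The second is to apply Theorem~\ref{thm:triebel} with $p_1=p_2=2p$ and then use the Sobolev embedding $F^{s}_{p,q}\hookrightarrow F^{s-n/(2p)}_{2p,q}$, but that loses smoothness. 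I would therefore follow the first route, obtaining
\[
\|fg\|_{F^{s(\cdot)}_{p(\cdot),q(\cdot)}}\lesssim\|f\|_{F^{s(\cdot)}_{p(\cdot),q(\cdot)}}\|g\|_{L^\infty}+\|f\|_{L^\infty}\|g\|_{F^{s(\cdot)}_{p(\cdot),q(\cdot)}},
\]
and the analogous Besov estimate. In the paraproduct splitting, the terms $\varphi_k*((\varphi_j*f)S_{j-2}g)$ and the diagonal terms are handled exactly as in the proof of Theorem~\ref{thm:triebel}. The diagonal (high--high) part needs $s^->0$ and the convolution estimates of \cite{AC}.

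It then remains to prove the embedding $B^{s(\cdot)}_{p(\cdot),q(\cdot)}\hookrightarrow L^\infty$ (which also covers $F$, since $F\hookrightarrow B^{s(\cdot)}_{p(\cdot),\infty}$). For this I would use
\[
\|\varphi_k*f\|_{L^\infty}\lesssim 2^{-k\varepsilon}\|2^{ks(\cdot)}\varphi_k*f\|_{L^{p(\cdot)}},\qquad \varepsilon=\operatorname*{ess\,inf}(s-n/p)>0.
\]
This is a variable-exponent Nikol'skii inequality. To prove it, I would use the Peetre maximal function: $|\varphi_k*f(x)|\lesssim$ an average over the ball $B(x,2^{-k})$ of $|\varphi_k*f|^{t}$ raised to $1/t$, with tails controlled. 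I would then apply Hölder in $L^{p(\cdot)}$ on that ball, with $\|\chi_{B}\|_{p'(\cdot)}$-type bounds $\asymp 2^{-kn/p(x)}$ coming from the log-Hölder continuity of $1/p$ and of $s$, the latter via $2^{ks(x)}\leq C2^{ks(y)}(1+2^k|x-y|)^{c_s}$. Summing over $k$ then gives $\|f\|_{L^\infty}\lesssim\|f\|_{B^{s(\cdot)}_{p(\cdot),\infty}}$. Combining this with the Leibniz estimate yields \eqref{eq:algebra-triebel} and \eqref{eq:algebra-besov}, first for Schwartz $f,g$. For general $f,g$ in the spaces I would pass to the limit via $S_kf\,S_kg$ and Fatou's lemma, since density fails when $q^+$ or $p^+$ is infinite.

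The main obstacle is adapting the proof of Theorem~\ref{thm:triebel} to $L^\infty$ factors without relying on the maximal operator's boundedness on $L^{p_i}$. A related difficulty is the uniform local Nikol'skii estimate, since $p$ and $s$ vary inside each ball.
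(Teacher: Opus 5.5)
Your proposal is correct, and its core is the paper's proof. Take $a\equiv1$ in the split \eqref{eq:two-input-split}, prove $\|fg\|\lesssim\|f\|\,\|g\|_{L^\infty}+\|f\|_{L^\infty}\|g\|$ in both scales, and combine this with the embeddings of $B^{s(\cdot)}_{p(\cdot),q(\cdot)}$ and $F^{s(\cdot)}_{p(\cdot),q(\cdot)}$ into $L^\infty$.

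The paper's execution is lighter than you expect. For $a\equiv1$ no Peetre or maximal functions enter. With $S_kg=\sum_{j=0}^k\varphi_j*g$ one has, pointwise, $2^{ks(x)}|(\varphi_k*f)(x)\,S_kg(x)|\le\|\mathcal F^{-1}\psi\|_{L^1}\|g\|_{L^\infty}2^{ks(x)}|(\varphi_k*f)(x)|$. Each summand has Fourier support in $\{|\omega|\le2^{k+2}\}$, so Lemma~\ref{lem:balls} (which uses $s^->0$) finishes both sums. The ``main obstacle'' you name therefore does not arise.

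The $L^\infty$ embeddings are cited in the paper from \cite[Corollary~6.7]{c}. Your Nikol'skii route is a valid self-contained substitute, and it needs no H\"older-on-balls step. Suppose $u=2^{ks(\cdot)}|\varphi_k*f|$ satisfies $\|u\|_{L^{p(\cdot)}}\le1$. Combine $|(\varphi_k*f)(x)|\lesssim(\eta_{k,M}*|\varphi_k*f|)(x)$ with Young's inequality $2^{kn(1-1/p(y))}u(y)\le u(y)^{p(y)}+2^{kn}$. This gives $|(\varphi_k*f)(x)|\lesssim2^{-k\varepsilon}$, using only $s-n/p\ge\varepsilon$ a.e.

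Two corrections:
\begin{itemize}
\item Taking $p_1=p_2=\infty$ would force $p=\infty$. What you actually use is the pair $(p,\infty)$ in one sum and $(\infty,p)$ in the other.
\item Density does not fail here, since $\mathcal P_0$ requires $p^+,q^+<\infty$. The paper extends from $\mathcal S$ by \cite[Corollary~5.2]{ACatoms} and identifies the limit with the pointwise product via convergence in $L^\infty$. Your Fatou argument with $S_kf\,S_kg$ works but is unnecessary.
\end{itemize}
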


\section{Basic properties of the symbol classes}\label{sec:symbol-properties}

We study $BS\mathcal B^m_\delta(r,\mu,v;N,\lambda)$ in the range $1\leq\mu\leq\infty$
used in Theorem~\ref{thm:triebel}. Throughout this section,
$m\in\mathbb R$, $0\leq\delta<1$, $0<v\leq\infty$,
$1\leq\lambda\leq2$, $r>n/\mu$, and $N>2n/\lambda$,
unless stated otherwise. The constants may depend
on the fixed parameters and cutoffs, but not on the symbol.

\subsection{Dyadic resolutions and smooth multiplication}

We first record the Banach-valued estimates needed below.
Let $E$ be a complex Banach space and $d\in\mathbb N$.
For a strongly measurable function $h:\mathbb R^d\to E$, we use
\[
\left\|h\right\|_{L^u(\mathbb R^d;E)}
=\left(\int_{\mathbb R^d}\left\|h(x)\right\|_E^u\,\mathrm d x\right)^{1/u},
\qquad 1\leq u<\infty,
\]
and the essential supremum of $\left\|h(x)\right\|_E$ when $u=\infty$.
For $t>0$, $1\leq u\leq\infty$, and $0<w\leq\infty$, define
$B^t_{u,w}(\mathbb R^d;E)$ by
\[
\left\|b\right\|_{B^t_{u,w}(\mathbb R^d;E)}
=\left\|\left(2^{jt}\left\|\varphi^{(d)}_j*b\right\|_{L^u(\mathbb R^d;E)}
\right)_{j\in\mathbb N_0}\right\|_{\ell^w},
\]
where $b\in\mathcal S'(\mathbb R^d;E)$ and
$(\varphi^{(d)}_j)_{j\in\mathbb N_0}$ is a dyadic resolution in
$\mathbb R^d$ fixed in Section~\ref{sec:definitions}. These components
are smooth $E$-valued functions. The space consists of those $b$ for
which the displayed quasi-norm is finite.
For $d=n$ and $E=B^N_{\lambda,\infty}(\mathbb R^{2n})$, this
definition with $(t,u,w)=(r,\mu,v)$ agrees with
$B^r_{\mu,v}(B^N_{\lambda,\infty})$ in Definition~\ref{def:mixed-symbol}.

\Needspace{10\baselineskip}
\begin{lem}\label{lem:banach-besov}
Let $d\in\mathbb N$, $E$ be a complex Banach space, $t>0$, $1\leq u\leq\infty$, and
$0<w\leq\infty$.
\begin{enumerate}
\item[\textup{(i)}] The quasi-norm of $B^t_{u,w}(\mathbb R^d;E)$ is independent,
up to equivalence, of the smooth dyadic resolution.
\item[\textup{(ii)}] For every
$b\in B^t_{u,w}(\mathbb R^d;E)$ and $\zeta\in C_b^\infty(\mathbb R^d)$,
\[
\left\|\zeta b\right\|_{B^t_{u,w}(\mathbb R^d;E)}
\leq C_\zeta\left\|b\right\|_{B^t_{u,w}(\mathbb R^d;E)}.
\]
The constant $C_\zeta>0$ is independent of $b$;
only finitely many derivative bounds enter it.
\end{enumerate}
\end{lem}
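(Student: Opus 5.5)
Both parts come from the scalar Littlewood--Paley arguments. The only Banach-valued inputs are the Minkowski integral inequality and Young's inequality for convolutions of a scalar $L^1$ kernel with an $E$-valued $L^u$ function:
\[
\left\|\phi*h\right\|_{L^u(\mathbb R^d;E)}\leq\left\|\phi\right\|_{L^1}\left\|h\right\|_{L^u(\mathbb R^d;E)},
\qquad 1\leq u\leq\infty .
\]
This holds because $\|(\phi*h)(x)\|_E\leq(|\phi|*\|h(\cdot)\|_E)(x)$ pointwise. No geometric property of $E$ (UMD, type) is needed, since we never apply singular integrals or maximal functions. We only use convolution with Schwartz kernels, which is exactly why the assumption $u\geq1$ suffices.

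For (i), let $(\varphi_j)$ and $(\tilde\varphi_j)$ be two dyadic resolutions in $\mathbb R^d$. The supports of $\widehat{\tilde\varphi_j}$ meet those of $\widehat\varphi_{j'}$ only for $|j-j'|\leq K$, with $K$ fixed. Since $\sum_{j'}\widehat\varphi_{j'}=1$, we get
\[
\tilde\varphi_j*b=\sum_{|j'-j|\leq K}\tilde\varphi_j*(\varphi_{j'}*b).
\]
Here $\|\tilde\varphi_j\|_{L^1}=\|\tilde\varphi_1\|_{L^1}$ for $j\geq1$ by dilation invariance. This identity is legitimate for $E$-valued tempered distributions because $b$ is tested against Schwartz functions and $\varphi_{j'}*b$ is a smooth $E$-valued function of polynomial growth. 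Young's inequality then gives $\|\tilde\varphi_j*b\|_{L^u(E)}\lesssim\sum_{|j'-j|\leq K}\|\varphi_{j'}*b\|_{L^u(E)}$. Multiplying by $2^{jt}\asymp2^{j't}$ and taking the $\ell^w$ quasi-norm, using the $w$-triangle inequality when $w<1$, yields one inequality. Symmetry gives the other.

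For (ii), I would use the standard paraproduct decomposition, with $\zeta$ playing the role of the smooth factor. Write $\zeta=\sum_k\varphi_k*\zeta$ and $b=\sum_l\varphi_l*b$, and split $\zeta b$ into three parts:
\[
\Pi_1=\sum_l(S_{l-3}\zeta)(\varphi_l*b),\qquad
\Pi_2=\sum_{|k-l|\leq2}(\varphi_k*\zeta)(\varphi_l*b),\qquad
\Pi_3=\sum_k(\varphi_k*\zeta)(S_{k-3}b),
\]
where $S_k=\sum_{i\leq k}\varphi_i*$. In $\Pi_1$ each term has Fourier support in an annulus of size $2^l$, and $\|S_{l-3}\zeta\|_\infty\lesssim\|\zeta\|_\infty$. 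So $\varphi_j*\Pi_1$ involves only $|l-j|\leq K$, and Young's inequality gives the bound $\|b\|$.

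For $\Pi_3$ and $\Pi_2$, the smoothness of $\zeta$ gives $\|\varphi_k*\zeta\|_\infty\lesssim2^{-kM}\max_{|\alpha|=M}\|\partial^\alpha\zeta\|_\infty$ for $k\geq1$ and any $M$. This follows because $\widehat\varphi_k$ vanishes near $0$, so $\varphi_k*\zeta$ is a sum of derivatives of order $M$ of $\zeta$ convolved with $L^1$-bounded kernels. We also have $\|S_kb\|_{L^u(E)}\lesssim\sum_{i\leq k}\|\varphi_i*b\|_{L^u(E)}\lesssim\|b\|$, using $t>0$, or simply $\lesssim\|b\|_{B^t_{u,w}}$ after a crude sum. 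Choosing $M>t$ then makes $2^{jt}\|\varphi_j*\Pi_3\|_{L^u(E)}\lesssim2^{-j(M-t)}\|b\|$, which is summable in any $\ell^w$.

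The main obstacle is $\Pi_2$. There the Fourier supports are only contained in balls of radius $\sim2^k$, so $\varphi_j*\Pi_2$ receives contributions from all $k\geq j-K$. The estimate becomes
\[
2^{jt}\|\varphi_j*\Pi_2\|\lesssim\sum_{k\geq j-K}2^{(j-k)t}\,2^{-kM}\,2^{kt}\|\varphi_k*b\|_{L^u(E)} .
\]
This is summable since $t>0$ (even $M=0$ works, by a discrete Young inequality in $\ell^{\min(w,1)}$). The constant therefore depends only on finitely many derivatives of $\zeta$, namely those up to order $M>t$, and the $k=0$ low-frequency terms are handled by $\|\zeta\|_\infty$.
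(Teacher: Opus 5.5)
Your proposal is correct and follows essentially the same route as the paper. Part (i) uses the finite-overlap identity, Young's inequality and the $\min\{1,w\}$-triangle inequality. Part (ii) uses the same three-piece paraproduct splitting: the low--high piece is bounded by $\|\zeta\|_{L^\infty}$, the high--low piece by the decay $2^{-kM}$ of $\varphi_k*\zeta$ with $M>t$, and the diagonal piece by $t>0$ and summation over shifts (the paper also discards the decay of $\varphi_k*\zeta$ there).

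One step the paper states and you leave implicit: $t>0$ and the decay of $\varphi_k*\zeta$ make the double series for $\zeta b$ absolutely convergent in $L^u(\mathbb R^d;E)$. This is what justifies regrouping it into your three pieces.
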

\begin{proof}
\textup{(i)} Let $(\varphi^{(d)}_j)_{j\in\mathbb N_0}$ and
$(\widetilde\varphi^{(d)}_i)_{i\in\mathbb N_0}$ be two admissible
dyadic resolutions in $\mathbb R^d$. There exists an integer
$d_0\in\mathbb N_0$, depending only on their support radii, such that
\[
\widetilde\varphi^{(d)}_i*b
=\sum_{\substack{j\in\mathbb N_0\\\left|j-i\right|\leq d_0}}
\widetilde\varphi^{(d)}_i*(\varphi^{(d)}_j*b),
\qquad i\in\mathbb N_0.
\]
The kernels $\widetilde\varphi^{(d)}_i$ have uniformly bounded
$L^1(\mathbb R^d)$ norms. Since $u\geq1$, Young's inequality gives
\[
\left\|\widetilde\varphi^{(d)}_i*b\right\|_{L^u(\mathbb R^d;E)}
\lesssim\sum_{\substack{j\in\mathbb N_0\\\left|j-i\right|\leq d_0}}
\left\|\varphi^{(d)}_j*b\right\|_{L^u(\mathbb R^d;E)}.
\]
Since $2^{it}\asymp2^{jt}$ when $\left|i-j\right|\leq d_0$, the
$\min\{1,w\}$-triangle inequality in $\ell^w$ gives
\[
\begin{aligned}
&\left\|\left(2^{it}
\left\|\widetilde\varphi^{(d)}_i*b\right\|_{L^u(\mathbb R^d;E)}
\right)_{i\in\mathbb N_0}\right\|_{\ell^w}^{\min\{1,w\}}
\lesssim\sum_{h=-d_0}^{d_0}
\left\|\left(2^{(i+h)t}
\left\|\varphi^{(d)}_{i+h}*b\right\|_{L^u(\mathbb R^d;E)}
\right)_{i\in\mathbb N_0}\right\|_{\ell^w}^{\min\{1,w\}}\\
&\quad \leq(2d_0+1)
\left\|\left(2^{jt}
\left\|\varphi^{(d)}_j*b\right\|_{L^u(\mathbb R^d;E)}
\right)_{j\in\mathbb N_0}\right\|_{\ell^w}^{\min\{1,w\}}.
\end{aligned}
\]
Here $\varphi^{(d)}_j=0$ for $j<0$.
Interchanging the two resolutions gives the reverse inequality and
proves the equivalence of the quasi-norms.

\textup{(ii)} Choose $L\in\mathbb N$
with $L>t$. Taylor's formula and the vanishing moments of
$\varphi^{(d)}_i$, for $i\geq1$, give
\[
\left\|\varphi^{(d)}_i*\zeta\right\|_{L^\infty(\mathbb R^d)}
\leq C_L2^{-iL},\qquad i\in\mathbb N_0,
\]
where the bound at $i=0$ follows from $\left\|\zeta\right\|_{L^\infty(\mathbb R^d)}$.
The constant $C_L$ depends only on the dyadic kernels and
$\max_{\left|\beta\right|\leq L}\left\|\partial^\beta\zeta\right\|_{L^\infty(\mathbb R^d)}$,
where $\beta\in\mathbb N_0^d$.
Since $t>0$,
\[
\sum_{j=0}^{\infty}
\left\|\varphi^{(d)}_j*b\right\|_{L^u(\mathbb R^d;E)}
\leq\left(\sum_{j=0}^{\infty}2^{-jt}\right)
\left\|b\right\|_{B^t_{u,w}(\mathbb R^d;E)}<\infty.
\]
Also $\sum_{i=0}^{\infty}
\left\|\varphi^{(d)}_i*\zeta\right\|_{L^\infty(\mathbb R^d)}<\infty$.
Thus the double series for $\zeta b$ is absolutely convergent in
$L^u(\mathbb R^d;E)$ and may be split into
\[
\begin{aligned}
&\zeta b
=\sum_{j=4}^{\infty}
\left(\sum_{i=0}^{j-4}\varphi^{(d)}_i*\zeta\right)
(\varphi^{(d)}_j*b)
+\sum_{i=4}^{\infty}(\varphi^{(d)}_i*\zeta)
\left(\sum_{j=0}^{i-4}\varphi^{(d)}_j*b\right)\\
&\quad +\sum_{j=0}^{\infty}
\sum_{\substack{i\in\mathbb N_0\\\left|i-j\right|\leq3}}
(\varphi^{(d)}_i*\zeta)(\varphi^{(d)}_j*b)
:=S_1+S_2+S_3.
\end{aligned}
\]
There exist fixed integers $d_1,d_2\in\mathbb N_0$, depending only
on the Fourier supports of the dyadic kernels, with the following
properties. In $S_1$, the term indexed by $j$ contributes to
$\varphi^{(d)}_h*S_1$ only when $\left|h-j\right|\leq d_1$; in $S_2$, the
term indexed by $i$ contributes only when $\left|h-i\right|\leq d_1$.
In $S_3$, the product indexed by $(i,j)$ contributes only when
$h\leq j+d_2$. Here and below $h\in\mathbb N_0$.

For $S_1$, the low-frequency partial sums satisfy
\[
\sup_{j\geq4}
\left\|\sum_{i=0}^{j-4}\varphi^{(d)}_i*\zeta
\right\|_{L^\infty(\mathbb R^d)}
\lesssim\left\|\zeta\right\|_{L^\infty(\mathbb R^d)}.
\]
Young's inequality and $2^{ht}\asymp2^{jt}$ for $\left|h-j\right|\leq d_1$
therefore give
\[
\begin{aligned}
2^{ht}\left\|\varphi^{(d)}_h*S_1\right\|_{L^u(\mathbb R^d;E)}
&\lesssim\left\|\zeta\right\|_{L^\infty(\mathbb R^d)}
\sum_{\substack{j\geq4\\\left|h-j\right|\leq d_1}}
2^{jt}\left\|\varphi^{(d)}_j*b\right\|_{L^u(\mathbb R^d;E)}.
\end{aligned}
\]
Applying the same finite-shift estimate as above yields
\[
\left\|S_1\right\|_{B^t_{u,w}(\mathbb R^d;E)}
\lesssim\left\|\zeta\right\|_{L^\infty(\mathbb R^d)}
\left\|\left(2^{jt}\left\|\varphi^{(d)}_j*b\right\|_{L^u(\mathbb R^d;E)}
\right)_{j\in\mathbb N_0}\right\|_{\ell^w}
=\left\|\zeta\right\|_{L^\infty(\mathbb R^d)}
\left\|b\right\|_{B^t_{u,w}(\mathbb R^d;E)}.
\]

For $S_2$, using the decay of $\varphi^{(d)}_i*\zeta$, we obtain
\[
2^{ht}\left\|\varphi^{(d)}_h*S_2\right\|_{L^u(\mathbb R^d;E)}
\lesssim C_L
\sum_{\substack{i\geq4\\\left|h-i\right|\leq d_1}}
2^{ht-iL}\sum_{j=0}^{i-4}
\left\|\varphi^{(d)}_j*b\right\|_{L^u(\mathbb R^d;E)}
\lesssim C_L2^{-h(L-t)}
\left(\sum_{j=0}^{\infty}2^{-jt}\right)
\left\|b\right\|_{B^t_{u,w}(\mathbb R^d;E)}.
\]
Since $L>t$, taking the $\ell^w$ quasi-norm gives
\[
\left\|S_2\right\|_{B^t_{u,w}(\mathbb R^d;E)}
\lesssim C_L
\left\|(2^{-h(L-t)})_{h\in\mathbb N_0}\right\|_{\ell^w}
\left\|b\right\|_{B^t_{u,w}(\mathbb R^d;E)}
\lesssim C_L\left\|b\right\|_{B^t_{u,w}(\mathbb R^d;E)}.
\]

For $S_3$, Young's inequality and the uniformly bounded number of
indices $i$ with $\left|i-j\right|\leq3$ imply
\[
\begin{aligned}
2^{ht}\left\|\varphi^{(d)}_h*S_3\right\|_{L^u(\mathbb R^d;E)}
&\lesssim C_L
\sum_{j=\max\left\{0,h-d_2\right\}}^{\infty}
2^{ht-jL}\left\|\varphi^{(d)}_j*b\right\|_{L^u(\mathbb R^d;E)}\\
&\leq C_L
\sum_{j=\max\left\{0,h-d_2\right\}}^{\infty}
2^{-(j-h)t}\,2^{jt}
\left\|\varphi^{(d)}_j*b\right\|_{L^u(\mathbb R^d;E)}.
\end{aligned}
\]
Applying the $\min\{1,w\}$-triangle inequality to the shifts
$k=j-h\geq-d_2$ gives
\[
\left\|S_3\right\|_{B^t_{u,w}(\mathbb R^d;E)}^{\min\{1,w\}}
\lesssim C_L^{\min\{1,w\}}
\left(\sum_{k=-d_2}^{\infty}2^{-kt\min\{1,w\}}\right)
\left\|b\right\|_{B^t_{u,w}(\mathbb R^d;E)}^{\min\{1,w\}}
\lesssim C_L^{\min\{1,w\}}
\left\|b\right\|_{B^t_{u,w}(\mathbb R^d;E)}^{\min\{1,w\}}.
\]
Combining the three
estimates with the $\min\{1,w\}$-triangle inequality gives
\[
\left\|\zeta b\right\|_{B^t_{u,w}(\mathbb R^d;E)}^{\min\{1,w\}}
\leq\sum_{\nu=1}^3
\left\|S_\nu\right\|_{B^t_{u,w}(\mathbb R^d;E)}^{\min\{1,w\}}
\lesssim C_L^{\min\{1,w\}}
\left\|b\right\|_{B^t_{u,w}(\mathbb R^d;E)}^{\min\{1,w\}},
\]
which proves the multiplication estimate.
\end{proof}

Now, we show that the class $BS\mathcal B^m_\delta(r,\mu,v;N,\lambda)$
is independent of the decompositions.

\begin{proposition}\label{prop:symbol-independent}
Let $m\in\mathbb R$, $0\leq\delta<1$, $1\leq\mu\leq\infty$,
$0<v\leq\infty$, $1\leq\lambda\leq2$, $r>n/\mu$, and $N>2n/\lambda$.
The class $BS\mathcal B^m_\delta(r,\mu,v;N,\lambda)$ is unchanged,
with equivalent quasi-norms, under each of the following changes:
\begin{enumerate}
\item[\textup{(i)}] the spatial dyadic resolution;
\item[\textup{(ii)}] the dyadic resolution defining
$B^N_{\lambda,\infty}(\mathbb R^{2n})$;
\item[\textup{(iii)}] the joint-frequency cutoff $\Psi$ used to form the
functions $b_k$ in \eqref{eq:profile}.
\end{enumerate}
In \textup{(iii)}, the replacement
is a function $\widetilde\Psi\in C_c^\infty(\mathbb R^{2n})$ equal
to one near the origin, giving dyadic cutoffs supported in
$\left\{(\xi,\eta)\in\mathbb R^{2n}:c_1 2^k\leq\left|(\xi,\eta)\right|\leq c_2 2^k\right\}$ for $k\geq1$,
where $0<c_1<c_2<\infty$ are independent of $k$.
\end{proposition}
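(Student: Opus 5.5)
Parts \textup{(i)} and \textup{(ii)} reduce to standard dyadic comparisons, and part \textup{(iii)} carries the real content.

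For \textup{(i)}, note that for each fixed $k$ the quantity $\left\|b_k\right\|_{B^r_{\mu,v}(B^N_{\lambda,\infty})}$ coincides with $\left\|b_k\right\|_{B^r_{\mu,v}(\mathbb R^n;E)}$ with $E=B^N_{\lambda,\infty}(\mathbb R^{2n})$. Lemma~\ref{lem:banach-besov}\textup{(i)} then gives equivalence constants independent of $b_k$, hence independent of $k$. Taking $\sup_k 2^{-k(m+\delta r)}$ finishes this part, since \eqref{eq:symbol-uniform} does not involve the spatial resolution.

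For \textup{(ii)}, apply the scalar version of the same finite-overlap argument (Lemma~\ref{lem:banach-besov}\textup{(i)} with $d=2n$ and $E=\mathbb C$, $t=N$, $u=\lambda$, $w=\infty$). This shows that two resolutions give norms $\left\|\cdot\right\|_E$ and $\left\|\cdot\right\|_{\widetilde E}$ equivalent on $\mathcal S'(\mathbb R^{2n})$. The equivalence passes pointwise in $x$ through the ess sup in \eqref{eq:symbol-uniform}. It also passes through $\left\|(\varphi_\ell*_xb_k)(x,\cdot,\cdot)\right\|_E$ inside the $\ell^v(L^\mu)$ norm, because replacing the norm of $E$ by an equivalent one changes each entry by bounded factors only.

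For \textup{(iii)}, let $\widetilde\chi_k$ be built from $\widetilde\Psi$ and set $\widetilde b_k(x,\xi,\eta)=a(x,2^k\xi,2^k\eta)\widetilde\chi_k(2^k\xi,2^k\eta)$. The plan is to write
\[
\widetilde b_k(x,\xi,\eta)=\sum_{|k'-k|\leq d_0} b_{k'}(x,2^{k-k'}\xi,2^{k-k'}\eta)\,\zeta_{k,k'}(\xi,\eta),
\qquad \zeta_{k,k'}(\xi,\eta):=\widetilde\chi_k(2^k\xi,2^k\eta).
\]
This uses $\sum_{k'}\chi_{k'}=1$ and the annular supports, which force only $|k'-k|\leq d_0$ to contribute, with $d_0$ depending on $c_1,c_2$. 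Small $k$ need separate bookkeeping since supports are balls there, but only finitely many terms occur. The functions $\zeta_{k,k'}$ are, for $k\geq1$, the fixed profile $\widetilde\Psi(\cdot)-\widetilde\Psi(2\cdot)$, and for $k=0$ the function $\widetilde\Psi$. So they lie in $C_b^\infty(\mathbb R^{2n})$ with bounds independent of $k$.

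The argument then needs two facts on $E$. First, multiplication by $\zeta\in C_b^\infty(\mathbb R^{2n})$ is bounded on $E$; this is Lemma~\ref{lem:banach-besov}\textup{(ii)} with $d=2n$ and $E=\mathbb C$. Second, dilation $u\mapsto u(2^{h}\cdot)$ with $|h|\leq d_0$ is bounded on $E$. That holds because dilation maps one admissible resolution to another (up to index shift $h$) and scales $L^\lambda$ by $2^{-2nh/\lambda}$, so part \textup{(ii)} applies. Both operations act only in $(\xi,\eta)$ and commute with $\varphi_\ell*_x$. Hence
\[
\left\|(\varphi_\ell*_x\widetilde b_k)(x,\cdot,\cdot)\right\|_E\lesssim\sum_{|k'-k|\leq d_0}\left\|(\varphi_\ell*_xb_{k'})(x,\cdot,\cdot)\right\|_E
\]
pointwise in $x$ and $\ell$, and similarly without $\varphi_\ell$ for \eqref{eq:symbol-uniform}.

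Taking the $\ell^v(L^\mu)$ quasi-norm with the $\min\{1,v\}$-triangle inequality, and using $2^{k'(m+\delta r)}\asymp2^{k(m+\delta r)}$, gives $\left\|a\right\|^{\sim}\lesssim\left\|a\right\|$. Symmetry gives the reverse inequality.

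The main obstacle is the dilation step in \textup{(iii)}: checking that $u(2^h\cdot)$ is controlled in $B^N_{\lambda,\infty}$ uniformly for the finitely many $h$. The bounded-overlap comparison also needs care for $k\leq d_0$, where the cutoffs are not annular and $b_0$ has ball support.
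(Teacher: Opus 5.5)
Your proposal is correct and follows the paper's proof essentially step for step. Parts (i) and (ii) go through the finite-overlap resolution comparison of Lemma~\ref{lem:banach-besov}(i). Part (iii) uses the same ingredients as the paper: the identity $\widetilde b_k=\widetilde\chi_k(2^k\cdot)\sum_{|h-k|\leq d}b_h(\cdot,2^{k-h}\cdot)$, boundedness of smooth multiplication and of finitely many dilations on $B^N_{\lambda,\infty}(\mathbb R^{2n})$, commutation with $\varphi_\ell*_x$, and the $\min\{1,v\}$-triangle inequality.

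Your justification of the dilation step is more explicit than the paper's, which only asserts it. One small correction: for one sign of $h$ the rescaled family subdivides $\varphi^{(2n)}_0$ rather than being an exact index shift. This is harmless, since the family still has finite frequency overlap and uniformly $L^1$-bounded kernels.
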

\begin{proof}
Put $E=B^N_{\lambda,\infty}(\mathbb R^{2n})$, which is a Banach
space since $\lambda\geq1$. Part \textup{(i)} follows from
Lemma~\ref{lem:banach-besov}.

Part \textup{(ii)} follows from the classical independence of the
$B^N_{\lambda,\infty}(\mathbb R^{2n})$ norm, up to equivalence,
from the dyadic resolution.

For \textup{(iii)}, denote the new joint cutoffs and normalized functions by
$\widetilde\chi_k$ and $\widetilde b_k$. The partition identity gives
\[
\widetilde b_k(x,\xi,\eta)
=\widetilde\chi_k(2^k\xi,2^k\eta)
\sum_{\substack{h\in\mathbb N_0\\\left|h-k\right|\leq d_2}}
b_h(x,2^{k-h}\xi,2^{k-h}\eta),
\]
for $k\in\mathbb N_0$, where $d_2\in\mathbb N_0$ is chosen
large enough to cover all overlaps, including the low-frequency cutoffs.
The normalized cutoffs $\widetilde\chi_k(2^k\cdot)$ have fixed
compact supports and uniformly bounded derivatives. The dilations
$2^{k-h}$ range over a fixed finite set when $\left|h-k\right|\leq d_2$.
The boundedness of these dyadic dilations and
Lemma~\ref{lem:banach-besov}, applied in dimension $2n$, give
\[
\left\|\widetilde\chi_k(2^k\cdot)\,
 u(2^{k-h}\cdot)\right\|_{B^N_{\lambda,\infty}(\mathbb R^{2n})}
\lesssim\left\|u\right\|_{B^N_{\lambda,\infty}(\mathbb R^{2n})},
\]
for every $u\in E$ and $k,h\in\mathbb N_0$ with $\left|h-k\right|\leq d_2$,
with a constant independent of $u$, $k$, and $h$.
Consequently,
\[
\left\|\widetilde b_k\right\|_{L^\infty(\mathbb R^n;E)}
\lesssim\sum_{\substack{h\in\mathbb N_0\\\left|h-k\right|\leq d_2}}
\left\|b_h\right\|_{L^\infty(\mathbb R^n;E)}.
\]
Since the frequency multiplications and dilations commute with
spatial convolution, for $\ell,k\in\mathbb N_0$ and almost every
$x\in\mathbb R^n$ we also have
\[
\left\| (\varphi_\ell*_x\widetilde b_k)(x,\cdot,\cdot)\right\|_E
\lesssim\sum_{\substack{h\in\mathbb N_0\\\left|h-k\right|\leq d_2}}
\left\| (\varphi_\ell*_x b_h)(x,\cdot,\cdot)\right\|_E.
\]
Taking the $L^\mu$ norm and then using the
$\min\{1,v\}$-triangle inequality in $\ell^v$ yields
\[
\left\|\widetilde b_k\right\|_{B^r_{\mu,v}(\mathbb R^n;E)}^{\min\{1,v\}}
\lesssim\sum_{\substack{h\in\mathbb N_0\\\left|h-k\right|\leq d_2}}
\left\|b_h\right\|_{B^r_{\mu,v}(\mathbb R^n;E)}^{\min\{1,v\}}.
\]
Since $2^{km}\asymp2^{hm}$ and
$2^{k(m+\delta r)}\asymp2^{h(m+\delta r)}$ when $\left|h-k\right|\leq d_2$,
the preceding bounds imply
\[
\begin{aligned}
\sup_{k\in\mathbb N_0}2^{-km}
\left\|\widetilde b_k\right\|_{L^\infty(\mathbb R^n;E)}
&\lesssim\sup_{h\in\mathbb N_0}2^{-hm}
\left\|b_h\right\|_{L^\infty(\mathbb R^n;E)},\\[2mm]
\sup_{k\in\mathbb N_0}2^{-k(m+\delta r)}
\left\|\widetilde b_k\right\|_{B^r_{\mu,v}(\mathbb R^n;E)}
&\lesssim\sup_{h\in\mathbb N_0}2^{-h(m+\delta r)}
\left\|b_h\right\|_{B^r_{\mu,v}(\mathbb R^n;E)}.
\end{aligned}
\]
These are the two required symbol estimates. Interchanging the
joint-frequency decompositions gives the reverse bounds.
\end{proof}

\subsection{Embeddings}

\begin{proposition}\label{prop:symbol-embeddings}
Let $1\leq\mu\leq\infty$, $1\leq\lambda\leq2$,
$m,m_i\in\mathbb R$, $\delta,\delta_i\in[0,1)$,
$v,v_i\in(0,\infty]$, $r,r_i>n/\mu$, and
$N,N_i>2n/\lambda$, for $i=1,2$.
In \textup{(i)} and \textup{(ii)}, assume also that
$N_1\geq N_2$ and $m_1\leq m_2$.
\begin{enumerate}
\item[\textup{(i)}] If $r_1>r_2$ and
$m_1+\delta_1r_1\leq m_2+\delta_2r_2$, then
\[
BS\mathcal B^{m_1}_{\delta_1}(r_1,\mu,v_1;N_1,\lambda)
\hookrightarrow
BS\mathcal B^{m_2}_{\delta_2}(r_2,\mu,v_2;N_2,\lambda).
\]
\item[\textup{(ii)}] If $v_1\leq v_2$ and
$m_1+\delta_1r\leq m_2+\delta_2r$, then
\[
BS\mathcal B^{m_1}_{\delta_1}(r,\mu,v_1;N_1,\lambda)
\hookrightarrow
BS\mathcal B^{m_2}_{\delta_2}(r,\mu,v_2;N_2,\lambda).
\]
\item[\textup{(iii)}] If $\mu<\infty$, then
\[
BS\mathcal B^m_\delta(r,\mu,\min\{\mu,v\};N,\lambda)
\hookrightarrow BS\mathcal F^m_\delta(r,\mu,v;N,\lambda)
\hookrightarrow
BS\mathcal B^m_\delta(r,\mu,\max\{\mu,v\};N,\lambda).
\]
\end{enumerate}
\end{proposition}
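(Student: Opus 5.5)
The three parts compare the two defining conditions \eqref{eq:symbol-uniform} and \eqref{eq:symbol-spatial} term by term, with the symbol $a$ and hence the functions $b_k$ fixed. Put $E_i=B^{N_i}_{\lambda,\infty}(\mathbb R^{2n})$. Since $N_1\geq N_2$, the definition of the Besov norm gives $\left\|u\right\|_{E_2}\leq\left\|u\right\|_{E_1}$ for all $u$, because $2^{jN_2}\leq2^{jN_1}$. This inequality applies pointwise in $x$, both to $b_k(x,\cdot,\cdot)$ and to $(\varphi_\ell*_xb_k)(x,\cdot,\cdot)$. Because $m_1\leq m_2$, we also have $2^{-km_2}\leq2^{-km_1}$. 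Hence the uniform part of the quasi-norm satisfies
\[
\sup_k2^{-km_2}\mathop{\rm ess\,sup}_x\left\|b_k(x,\cdot,\cdot)\right\|_{E_2}
\leq\sup_k2^{-km_1}\mathop{\rm ess\,sup}_x\left\|b_k(x,\cdot,\cdot)\right\|_{E_1}
\]
in both (i) and (ii). Part (iii) keeps $m$ and $N$ fixed, so this part is an identity there.

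For the spatial part of (i), I set $A_\ell=\left\|\left\|(\varphi_\ell*_xb_k)(x,\cdot,\cdot)\right\|_{E_1}\right\|_{L^\mu_x}$. Then $\left\|(2^{\ell r_2}A_\ell)\right\|_{\ell^{v_2}}\leq\sup_\ell2^{\ell r_1}A_\ell\cdot\left\|(2^{-\ell(r_1-r_2)})_\ell\right\|_{\ell^{v_2}}$, and the last factor is finite because $r_1>r_2$. This gives the classical embedding $B^{r_1}_{\mu,v_1}\hookrightarrow B^{r_1}_{\mu,\infty}\hookrightarrow B^{r_2}_{\mu,v_2}$ in the $E$-valued setting. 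Together with the norm comparison $E_1\to E_2$, it yields $\left\|b_k\right\|_{B^{r_2}_{\mu,v_2}(E_2)}\lesssim\left\|b_k\right\|_{B^{r_1}_{\mu,v_1}(E_1)}$. The weight condition $m_1+\delta_1r_1\leq m_2+\delta_2r_2$ gives $2^{-k(m_2+\delta_2r_2)}\leq2^{-k(m_1+\delta_1r_1)}$, and taking the supremum over $k$ finishes (i). Part (ii) goes the same way with equal $r$, using $\ell^{v_1}\hookrightarrow\ell^{v_2}$ for $v_1\leq v_2$ in place of the geometric series.

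For (iii), only the spatial mixed norm changes. It suffices to prove, for any sequence $(g_\ell)$ of nonnegative measurable functions (here $g_\ell(x)=2^{\ell r}\left\|(\varphi_\ell*_xb_k)(x,\cdot,\cdot)\right\|_E$), the scalar inequalities
\[
\left\|(g_\ell)\right\|_{L^\mu(\ell^v)}\leq\left\|(g_\ell)\right\|_{\ell^{\min\{\mu,v\}}(L^\mu)},\qquad
\left\|(g_\ell)\right\|_{\ell^{\max\{\mu,v\}}(L^\mu)}\leq\left\|(g_\ell)\right\|_{L^\mu(\ell^v)}.
\]
For the first inequality, set $w=\min\{\mu,v\}$. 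Monotonicity of $\ell^w\subset\ell^v$ gives $\left\|(g_\ell)\right\|_{L^\mu(\ell^v)}\leq\left\|(g_\ell)\right\|_{L^\mu(\ell^w)}$. Since $\mu/w\geq1$, Minkowski's inequality in $L^{\mu/w}$ applied to $\sum_\ell g_\ell^w$ then bounds this by $\left\|(g_\ell)\right\|_{\ell^w(L^\mu)}$. The second inequality is the dual step, with $w=\max\{\mu,v\}$. First, $\left\|(g_\ell)\right\|_{\ell^w(L^\mu)}=\left\|\sum_\ell g_\ell^\mu\right\|_{\ell^{w/\mu}}^{1/\mu}$, where each term is $\int g_\ell^\mu$. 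The reverse Minkowski inequality (since $w/\mu\geq1$, the $\ell^{w/\mu}$ norm of integrals is at most the integral of $\ell^{w/\mu}$ norms) bounds this by $\left\|(g_\ell)\right\|_{L^\mu(\ell^w)}$. Finally, $\ell^v\subset\ell^w$ gives the bound $\left\|(g_\ell)\right\|_{L^\mu(\ell^v)}$. Multiplying by $2^{-k(m+\delta r)}$ and taking suprema proves both embeddings. All constants are absolute, and the weights do not depend on $\ell$.

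I do not expect a real obstacle. The only points needing care are the measurability of $x\mapsto\left\|(\varphi_\ell*_xb_k)(x,\cdot,\cdot)\right\|_E$, which was settled after Definition~\ref{def:symbol}, and the quasi-norm cases $v<1$. The arguments above use only monotonicity of $\ell^v$ and Minkowski inequalities with exponents $\geq1$, so they remain valid when $v<1$.
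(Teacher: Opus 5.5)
Your proposal is correct and follows the paper's proof. Parts (i) and (ii) are argued exactly as there: the comparison $\left\|u\right\|_{B^{N_2}_{\lambda,\infty}}\leq\left\|u\right\|_{B^{N_1}_{\lambda,\infty}}$, then the geometric series in $\ell$ or $\ell^{v_1}\hookrightarrow\ell^{v_2}$, then the two weight comparisons in $k$. For (iii) you write out the Minkowski-type mixed-norm inequalities that the paper only cites from \cite[p.~47, Proposition~2]{T83}.

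One cosmetic fix in the second inequality of (iii): the identity should read $\left\|(g_\ell)\right\|_{\ell^w(L^\mu)}=\bigl\|\bigl(\int g_\ell^\mu\bigr)_\ell\bigr\|_{\ell^{w/\mu}}^{1/\mu}$. The step you use there is Minkowski's integral inequality, not a ``reverse'' one. When $w=\infty$ it reduces to the trivial bound $\sup_\ell\left\|g_\ell\right\|_{L^\mu}\leq\left\|\sup_\ell g_\ell\right\|_{L^\mu}$.
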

\begin{proof}
\textup{(i)} For $u\in B^{N_1}_{\lambda,\infty}(\mathbb R^{2n})$, we have
\[
\left\|u\right\|_{B^{N_2}_{\lambda,\infty}(\mathbb R^{2n})}
=\sup_{j\in\mathbb N_0}2^{-j(N_1-N_2)}2^{jN_1}
\left\|\varphi^{(2n)}_j*u\right\|_{L^\lambda(\mathbb R^{2n})}
\leq\left\|u\right\|_{B^{N_1}_{\lambda,\infty}(\mathbb R^{2n})}.
\]
Let $a\in BS\mathcal B^{m_1}_{\delta_1}(r_1,\mu,v_1;N_1,\lambda)$,
and let $b_k$, $k\in\mathbb N_0$, be defined by \eqref{eq:profile}.
Since $r_1>r_2$, we have
\[
\begin{aligned}
&\left\|b_k\right\|_{B^{r_2}_{\mu,v_2}(B^{N_2}_{\lambda,\infty})}
\leq
\left\|(2^{-\ell(r_1-r_2)})_{\ell\in\mathbb N_0}
\right\|_{\ell^{v_2}}
\sup_{\ell\in\mathbb N_0}2^{\ell r_1}
\left\|\varphi_\ell*_x b_k\right\|_{L^\mu(\mathbb R^n;
 B^{N_1}_{\lambda,\infty}(\mathbb R^{2n}))}\\
&\quad \leq
\left\|(2^{-\ell(r_1-r_2)})_{\ell\in\mathbb N_0}
\right\|_{\ell^{v_2}}
\left\|b_k\right\|_{B^{r_1}_{\mu,v_1}(B^{N_1}_{\lambda,\infty})}
\lesssim\left\|b_k\right\|_{B^{r_1}_{\mu,v_1}(B^{N_1}_{\lambda,\infty})}.
\end{aligned}
\]
The constants are independent of $k$. Multiplying the uniform
frequency estimate by $2^{-km_2}$ and the spatial estimate by
$2^{-k(m_2+\delta_2r_2)}$, and using
$m_1\leq m_2$ and $m_1+\delta_1r_1\leq m_2+\delta_2r_2$, yields
\[
\left\|a\right\|_{BS\mathcal B^{m_2}_{\delta_2}(r_2,\mu,v_2;N_2,\lambda)}
\lesssim
\left\|a\right\|_{BS\mathcal B^{m_1}_{\delta_1}(r_1,\mu,v_1;N_1,\lambda)}.
\]
This proves \textup{(i)}.

\textup{(ii)} Let $a\in BS\mathcal B^{m_1}_{\delta_1}(r,\mu,v_1;N_1,\lambda)$,
and let $b_k$, $k\in\mathbb N_0$, be defined by \eqref{eq:profile}.
The frequency estimate in \textup{(i)} and the inclusion
$\ell^{v_1}\hookrightarrow\ell^{v_2}$ give
\[
\left\|b_k\right\|_{B^r_{\mu,v_2}(B^{N_2}_{\lambda,\infty})}
\leq\left\|\left(2^{\ell r}
\left\|\varphi_\ell*_x b_k\right\|_{L^\mu(\mathbb R^n;
 B^{N_1}_{\lambda,\infty}(\mathbb R^{2n}))}
\right)_{\ell\in\mathbb N_0}\right\|_{\ell^{v_2}}
\leq\left\|b_k\right\|_{B^r_{\mu,v_1}(B^{N_1}_{\lambda,\infty})}.
\]
Multiplying the frequency and spatial estimates by $2^{-km_2}$ and
$2^{-k(m_2+\delta_2r)}$, respectively, and using the assumptions on
$m_i$ and $\delta_i$, we obtain
\[
\left\|a\right\|_{BS\mathcal B^{m_2}_{\delta_2}(r,\mu,v_2;N_2,\lambda)}
\lesssim
\left\|a\right\|_{BS\mathcal B^{m_1}_{\delta_1}(r,\mu,v_1;N_1,\lambda)},
\]
which proves \textup{(ii)}.

\textup{(iii)} The two embeddings follow from the mixed-norm inequalities
underlying the classical Besov--Triebel--Lizorkin embeddings;
see \cite[p.~47, Proposition~2]{T83}.
\end{proof}

\subsection{Cutoff stability}

\begin{proposition}\label{prop:symbol-cutoffs}
Let $m\in\mathbb R$, $0\leq\delta<1$, $1\leq\mu\leq\infty$,
$0<v\leq\infty$, $1\leq\lambda\leq2$, $r>n/\mu$, and $N>2n/\lambda$.
Let $a\in BS\mathcal B^m_\delta(r,\mu,v;N,\lambda)$.
If $\zeta\in C_b^\infty(\mathbb R^n)$ and
$\Theta\in C_c^\infty(\mathbb R^{2n})$, then, for every integer
$J\in\mathbb N_0$,
\[
a_J(x,\xi,\eta)=\zeta(x)\Theta(2^{-J}\xi,2^{-J}\eta)a(x,\xi,\eta)
\]
belongs to $BS\mathcal B^m_\delta(r,\mu,v;N,\lambda)$ and
\[
\left\|a_J\right\|_{BS\mathcal B^m_\delta(r,\mu,v;N,\lambda)}\leq C_{\zeta,\Theta}\left\|a\right\|_{BS\mathcal B^m_\delta(r,\mu,v;N,\lambda)},
\]
where $C_{\zeta,\Theta}>0$ depends on the cutoffs and the fixed
parameters, but is independent of $J$ and $a$. Multiplication by
$\zeta(x)$ alone also preserves the class.
\end{proposition}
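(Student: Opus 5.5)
We compute the normalized pieces of $a_J$ directly. By \eqref{eq:profile},
\[
b^{J}_k(x,\xi,\eta)=\zeta(x)\,\Theta(2^{k-J}\xi,2^{k-J}\eta)\,b_k(x,\xi,\eta),\qquad k\in\mathbb N_0 .
\]
Thus $b^J_k$ is $b_k$ multiplied by the spatial factor $\zeta(x)$ and by the frequency factor $\theta_{k,J}(\xi,\eta):=\Theta(2^{k-J}\xi,2^{k-J}\eta)$. The plan is to show that multiplication by $\theta_{k,J}$ is bounded on $E=B^N_{\lambda,\infty}(\mathbb R^{2n})$, uniformly in $k$ and $J$, when it acts on functions supported in the fixed ball $\{|(\xi,\eta)|\leq2\}$. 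We then show that multiplication by $\zeta$ is bounded on $L^\infty(\mathbb R^n;E)$ and on $B^r_{\mu,v}(\mathbb R^n;E)$. Multiplying the two estimates gives the claim, with the same weights $2^{-km}$ and $2^{-k(m+\delta r)}$.

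\textbf{Frequency factor.} Fix $\omega\in C_c^\infty(\mathbb R^{2n})$ with $\omega=1$ on $\{|(\xi,\eta)|\leq2\}$. Since $\operatorname{supp}b_k(x,\cdot,\cdot)$ lies in that ball, $\theta_{k,J}b_k=(\omega\theta_{k,J})b_k$. We split into two cases.

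\emph{Case $k\leq J$.} Here $\theta_{k,J}$ is a dilate of $\Theta$ by $2^{k-J}\leq1$. All derivatives of $\theta_{k,J}$ are therefore bounded by those of $\Theta$, and $\omega\theta_{k,J}$ is bounded in $C_b^\infty(\mathbb R^{2n})$ uniformly in $k,J$.

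\emph{Case $k>J$.} Choose $R$ with $\operatorname{supp}\Theta\subset\{|\cdot|\leq R\}$. Then $\theta_{k,J}$ is supported in $|(\xi,\eta)|\leq R2^{J-k}$. For $k\geq1$ the support of $b_k$ lies in $|(\xi,\eta)|\geq1/2$, so the product vanishes once $2^{k-J}>2R$. The remaining indices with $k>J$ satisfy $1\leq 2^{k-J}\leq 2R$, which is a bounded range of dilations; for these the derivatives of $\theta_{k,J}$ are again uniformly bounded, by at most $(2R)^{L}\max_{|\beta|\leq L}\|\partial^\beta\Theta\|_{L^\infty}$ at order $L$. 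The index $k=0$ is handled the same way, since then $2^{k-J}\leq1$.

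In all cases $\omega\theta_{k,J}$ lies in a bounded subset of $C_b^\infty(\mathbb R^{2n})$ (only finitely many derivatives matter). Lemma~\ref{lem:banach-besov}(ii), applied with $d=2n$, $E=\mathbb C$, $t=N$, $u=\lambda$ and $w=\infty$, then gives
\[
\|\theta_{k,J}u\|_E\leq C\|u\|_E
\]
for $u$ supported in the ball, with $C$ independent of $k$ and $J$.

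\textbf{Spatial factor.} Frequency multiplication commutes with $\varphi_\ell*_x$. Hence
\[
\|(\varphi_\ell*_x(\theta_{k,J}b_k))(x)\|_E\lesssim\|(\varphi_\ell*_x b_k)(x)\|_E ,
\]
which yields
\[
\|\theta_{k,J}b_k\|_{B^r_{\mu,v}(\mathbb R^n;E)}\lesssim\|b_k\|_{B^r_{\mu,v}(\mathbb R^n;E)}
\]
and the analogous $L^\infty(\mathbb R^n;E)$ bound. Applying Lemma~\ref{lem:banach-besov}(ii) again, now with $d=n$, $E=B^N_{\lambda,\infty}$, $t=r>0$, $u=\mu\geq1$ and $w=v$, shows that multiplication by $\zeta$ is bounded on $B^r_{\mu,v}(\mathbb R^n;E)$. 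On $L^\infty(\mathbb R^n;E)$ it is trivially bounded by $\|\zeta\|_{L^\infty}$. Combining the two steps bounds both terms of the quasi-norm of $a_J$ by $C_{\zeta,\Theta}\|a\|_{BS\mathcal B^m_\delta}$. The case of $\zeta$ alone corresponds to $\Theta\equiv1$, where the frequency step is trivial.

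\textbf{Main obstacle.} The delicate point is the uniformity in $J$ of the frequency multiplier estimate. Naive dilation by $2^{k-J}$ with $k\gg J$ blows up derivatives, so the argument depends on the support observation above: for $k\geq1$, only boundedly many ratios $2^{k-J}\geq1$ survive. We also need $\omega\theta_{k,J}\in C_b^\infty$ rather than a general multiplier, so that Lemma~\ref{lem:banach-besov}(ii) applies with constants controlled by finitely many derivative bounds.
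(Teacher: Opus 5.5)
Your proposal is correct and follows essentially the same route as the paper. The paper also writes the normalized pieces as $\zeta(x)\Theta(2^{k-J}\cdot)b_k$ and inserts a cutoff equal to one on $B(0,2)$. It then uses the support of $b_k$ to discard the indices $k>J+M$ (your condition $2^{k-J}>2R$), applies Lemma~\ref{lem:banach-besov} in dimension $2n$ with uniform derivative bounds, commutes the frequency multiplier with $\varphi_\ell*_x$, and finishes with Lemma~\ref{lem:banach-besov} in the spatial variable for $\zeta$.
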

\begin{proof}
Put $E=B^N_{\lambda,\infty}(\mathbb R^{2n})$. For $k\in\mathbb N_0$,
the normalized function associated with $a_J$ is
\[
\zeta(x)\Theta(2^{k-J}\xi,2^{k-J}\eta)b_k(x,\xi,\eta).
\]
Choose $M\in\mathbb N$ and $\Omega\in C_c^\infty(\mathbb R^{2n})$
such that $\operatorname{supp}\Theta\subset B(0,2^{M-1})$ and
$\Omega=1$ on $B(0,2)$. The support properties of $b_k$ give
\[
\Theta(2^{k-J}\cdot)b_k
=\Omega\,\Theta(2^{k-J}\cdot)b_k,\qquad
\Theta(2^{k-J}\cdot)b_k=0\quad\text{if }k>J+M.
\]
For every $\gamma\in\mathbb N_0^{2n}$, the remaining multipliers satisfy
\[
\sup_{\substack{k,J\in\mathbb N_0\\k\leq J+M}}
\left\|\partial^\gamma\left[\Omega\,\Theta(2^{k-J}\cdot)\right]
\right\|_{L^\infty(\mathbb R^{2n})}\leq C_{\gamma,\Theta}<\infty.
\]
Lemma~\ref{lem:banach-besov}, applied in dimension $2n$, therefore gives
\[
\left\|\Theta(2^{k-J}\cdot)b_k(x,\cdot,\cdot)\right\|_E
\leq C_\Theta\left\|b_k(x,\cdot,\cdot)\right\|_E,
\]
for all $k,J\in\mathbb N_0$ and almost every $x\in\mathbb R^n$,
with $C_\Theta>0$ independent of $k$, $J$, and $x$.
Since the frequency multiplication commutes with spatial convolution,
taking the $L^\mu$ norm and the $\ell^v$ quasi-norm yields
\[
\left\|\Theta(2^{k-J}\cdot)b_k\right\|_{B^r_{\mu,v}(\mathbb R^n;E)}
\leq C_\Theta\left\|b_k\right\|_{B^r_{\mu,v}(\mathbb R^n;E)}.
\]
Applying Lemma~\ref{lem:banach-besov} in the spatial variable and
using the $L^\infty$ bound for $\zeta$, we obtain
\[
\begin{aligned}
\left\|\zeta\,\Theta(2^{k-J}\cdot)b_k\right\|_{L^\infty(\mathbb R^n;E)}
&\leq C_\Theta\left\|\zeta\right\|_{L^\infty(\mathbb R^n)}
\left\|b_k\right\|_{L^\infty(\mathbb R^n;E)},\\
\left\|\zeta\,\Theta(2^{k-J}\cdot)b_k\right\|_{B^r_{\mu,v}(\mathbb R^n;E)}
&\leq C_\zeta C_\Theta\left\|b_k\right\|_{B^r_{\mu,v}(\mathbb R^n;E)}.
\end{aligned}
\]
Multiplying these inequalities by $2^{-km}$ and
$2^{-k(m+\delta r)}$, respectively, and taking the suprema over
$k\in\mathbb N_0$, gives
\[
\left\|a_J\right\|_{BS\mathcal B^m_\delta(r,\mu,v;N,\lambda)}
\leq C_{\zeta,\Theta}\left\|a\right\|_{BS\mathcal B^m_\delta(r,\mu,v;N,\lambda)}.
\]
The assertion for multiplication by $\zeta(x)$ alone follows
directly from Lemma~\ref{lem:banach-besov}.
\end{proof}

\subsection{Relation to classical smooth symbols}

Let $m\in\mathbb R$ and $0\leq\delta\leq1$. Following
\cite[Section~2]{BMNT10}, the classical bilinear H\"ormander class
$BS^m_{1,\delta}$ consists of all $a\in C^\infty(\mathbb R^{3n})$
such that
\begin{equation}\label{eq:hormander}
\left|\partial_x^\gamma\partial_\xi^\alpha\partial_\eta^\beta
 a(x,\xi,\eta)\right|
\leq C_{\gamma,\alpha,\beta}
(1+\left|\xi\right|+\left|\eta\right|)^{m+\delta\left|\gamma\right|-\left|\alpha\right|-\left|\beta\right|}
\end{equation}
for all $(x,\xi,\eta)\in\mathbb R^{3n}$ and all
$\gamma,\alpha,\beta\in\mathbb N_0^n$. Here
$\left|\gamma\right|=\gamma_1+\cdots+\gamma_n$, and similarly for $\alpha$ and $\beta$;
each $C_{\gamma,\alpha,\beta}>0$ is independent of $x$, $\xi$, and $\eta$.
For $a\in BS^m_{1,\delta}$, the associated classical smooth bilinear
pseudodifferential operator is $T_a$ defined by \eqref{eq:operator}.
The early study of these operators goes back to Coifman and Meyer
\cite{CM75,CM78}.

First, we have the following inclusion.

\begin{proposition}\label{prop:hormander}
Let $m\in\mathbb R$, $0\leq\delta<1$, $r>0$, $1\leq\lambda\leq2$,
and $N>2n/\lambda$. Then
\[
BS^m_{1,\delta}\subset
BS\mathcal B^m_\delta(r,\infty,\infty;N,\lambda).
\]
\end{proposition}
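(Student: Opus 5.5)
To show $a\in BS^m_{1,\delta}$ lies in $BS\mathcal B^m_\delta(r,\infty,\infty;N,\lambda)$, I would verify \eqref{eq:symbol-uniform} and \eqref{eq:symbol-spatial} directly for the normalized pieces $b_k(x,\xi,\eta)=a(x,2^k\xi,2^k\eta)\chi_k(2^k\xi,2^k\eta)$. Both reduce to finite-order derivative bounds, via the elementary embedding $C^L_b$ with compact support into $B^N_{\lambda,\infty}(\mathbb R^{2n})$. The first step is to record that embedding. Fix an integer $L>N$. If $u\in C^L(\mathbb R^{2n})$ is supported in the ball $\{|(\xi,\eta)|\leq2\}$, then $\|\varphi^{(2n)}_j*u\|_{L^\lambda}\lesssim 2^{-jL}\max_{|\gamma|\leq L}\|\partial^\gamma u\|_{L^\lambda}$. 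This follows, as in the proof of Lemma~\ref{lem:banach-besov}\,(ii), from moving derivatives onto the kernel: write $\widehat{\varphi_j^{(2n)}}$ as $2^{-jL}$ times a sum of $\zeta^\gamma$ with $|\gamma|=L$ times uniformly controlled multipliers. Compact support converts $L^\infty$ bounds into $L^\lambda$ bounds. Hence
\[\|u\|_{B^N_{\lambda,\infty}}\lesssim\max_{|\gamma|\leq L}\|\partial^\gamma u\|_{L^\infty}.\]

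The second step is the uniform estimate. On the support of $b_k$, we have $1+2^k|(\xi,\eta)|\asymp2^k$ (and $\lesssim1$ when $k=0$). Each $(\xi,\eta)$-derivative of order $\gamma$ of $a(x,2^k\cdot,2^k\cdot)$ produces a factor $2^{k|\gamma|}$. By \eqref{eq:hormander} with $\gamma_x=0$, this factor is offset by $(2^k)^{m-|\gamma|}$. The derivatives of $\chi_k(2^k\cdot)$ are uniformly bounded, since this function is $\Psi-\Psi(2\cdot)$ for $k\geq1$. So $\|\partial^\gamma_{(\xi,\eta)}b_k(x,\cdot)\|_{L^\infty}\lesssim2^{km}$ uniformly in $x$ and $k$, which gives \eqref{eq:symbol-uniform}.

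The third step is the spatial estimate with $\mu=v=\infty$, which I expect to be the main point. I need $\sup_\ell 2^{\ell r}\sup_x\|(\varphi_\ell*_xb_k)(x,\cdot)\|_E\lesssim2^{k(m+\delta r)}$. Choose an integer $M>r$. Two bounds are available.
\begin{itemize}
\item For all $\ell$, the trivial bound $\|\varphi_\ell*_x b_k(x)\|_E\lesssim\sup_y\|b_k(y)\|_E\lesssim2^{km}$ holds, since $\|\varphi_\ell\|_{L^1}$ is bounded; Minkowski's inequality applies in $E$.
\item For $\ell\geq1$, vanishing moments of $\varphi_\ell$ and Taylor expansion in $x$ to order $M$ give $\|\varphi_\ell*_xb_k(x)\|_E\lesssim2^{-\ell M}\max_{|\gamma|=M}\sup_y\|\partial_x^\gamma b_k(y)\|_E$. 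The same argument as in the second step, now using \eqref{eq:hormander} with $|\gamma_x|=M$, bounds the last factor by $2^{k(m+\delta M)}$.
\end{itemize}
I then split into two cases. If $\ell\leq\delta k$, the first bound gives $2^{\ell r}2^{km}\leq2^{k(m+\delta r)}$. If $\ell>\delta k$, the second gives $2^{\ell r}2^{-\ell M}2^{k(m+\delta M)}=2^{k(m+\delta r)}2^{-(\ell-\delta k)(M-r)}\leq2^{k(m+\delta r)}$. Together these yield \eqref{eq:symbol-spatial}.

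The only technical care lies in two places. One is justifying differentiation and Taylor expansion of the $E$-valued map $y\mapsto b_k(y,\cdot)$, which is harmless because the mixed $(x,\xi,\eta)$ derivatives are bounded and compactly supported in $(\xi,\eta)$. The other is that all constants depend only on finitely many $C_{\gamma,\alpha,\beta}$.
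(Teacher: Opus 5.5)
Your proposal is correct and follows essentially the same route as the paper. The paper likewise bounds all mixed derivatives of $b_k$ by $2^{k(m+\delta|\gamma|)}$, converts these to $B^N_{\lambda,\infty}$ bounds using cancellation of the frequency kernels to an integer order exceeding $N$, and then combines the trivial bound with spatial cancellation of order $L>r$, splitting at $\ell=\delta k$.
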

\begin{proof}
Let $a\in BS^m_{1,\delta}$, and let $b_k$, $k\in\mathbb N_0$,
be defined by \eqref{eq:profile}.
On the support of $\chi_k$, $1+\left|\xi\right|+\left|\eta\right|\asymp2^k$. For every $\gamma,\alpha,\beta\in\mathbb N_0^n$ and $k\in\mathbb N_0$,
\eqref{eq:hormander} and Leibniz's formula, together with the uniformly
bounded derivatives of $\chi_k(2^k\cdot)$, give
\[
\sup_{(x,\xi,\eta)\in\mathbb R^{3n}}
\left|\partial_x^\gamma\partial_\xi^\alpha\partial_\eta^\beta
 b_k(x,\xi,\eta)\right|
\lesssim\sum_{\substack{\alpha_1\leq\alpha\\\beta_1\leq\beta}}
2^{k(\left|\alpha_1\right|+\left|\beta_1\right|)}
2^{k(m+\delta\left|\gamma\right|-\left|\alpha_1\right|-\left|\beta_1\right|)}
\lesssim2^{k(m+\delta\left|\gamma\right|)}.
\]
Here $\alpha_1\leq\alpha$ and $\beta_1\leq\beta$ mean
componentwise inequalities. Choose $K\in\mathbb N$ with $K>N$.
Since the normalized functions have fixed compact support,
Taylor's formula and the vanishing moments of the frequency kernels give
\[
\sup_{x\in\mathbb R^n}
\left\|\varphi_j^{(2n)}*_{(\xi,\eta)}
\partial_x^\gamma b_k(x,\cdot,\cdot)\right\|_{L^\lambda(\mathbb R^{2n})}
\lesssim2^{-jK}2^{k(m+\delta\left|\gamma\right|)},\qquad j,k\in\mathbb N_0.
\]
The estimate for $j=0$ follows from Young's inequality. Hence
\[
\sup_{x\in\mathbb R^n}
\left\|\partial_x^\gamma b_k(x,\cdot,\cdot)\right\|_{B^N_{\lambda,\infty}(\mathbb R^{2n})}
\lesssim2^{k(m+\delta\left|\gamma\right|)}
\sup_{j\in\mathbb N_0}2^{-j(K-N)}
=2^{k(m+\delta\left|\gamma\right|)}.
\]
Taking $\gamma=0$ proves \eqref{eq:symbol-uniform}.

Choose $L\in\mathbb N$ with $L>r$. The cancellation of the spatial dyadic
kernels for $\ell\geq1$ gives
\[
\sup_{x\in\mathbb R^n}\left\|(\varphi_\ell*_x b_k)(x,\cdot,\cdot)
\right\|_{B^N_{\lambda,\infty}}
\lesssim2^{km}\min\left\{1,2^{-L(\ell-k\delta)}\right\}.
\]
For $\ell=0$ the uniform bound suffices. Splitting at $\ell=k\delta$
shows that
\[
\sup_{\ell\geq0}2^{\ell r}
\sup_{x\in\mathbb R^n}\left\|(\varphi_\ell*_x b_k)(x,\cdot,\cdot)
\right\|_{B^N_{\lambda,\infty}}
\lesssim2^{k(m+\delta r)},
\]
which proves \eqref{eq:symbol-spatial} with $\mu=v=\infty$.
\end{proof}

\Needspace{4\baselineskip}
The following example shows that symbols in our classes need not be
infinitely differentiable in the spatial variable.

\begin{example}\label{ex:rough-coefficient}
Let $m\in\mathbb R$, $1\leq\mu\leq\infty$, $0<v\leq\infty$,
$r>n/\mu$, $1\leq\lambda\leq2$, and $N>2n/\lambda$.
Let $b\in B^r_{\mu,v}(\mathbb R^n)\cap L^\infty(\mathbb R^n)$. Then
\[
a(x,\xi,\eta)=b(x)(1+\left|\xi\right|^2+\left|\eta\right|^2)^{m/2}
\]
belongs to $BS\mathcal B^m_0(r,\mu,v;N,\lambda)$.

Indeed, for $k\in\mathbb N_0$, the normalized function is
\[
b_k(x,\xi,\eta)=2^{km}b(x)
(2^{-2k}+\left|\xi\right|^2+\left|\eta\right|^2)^{m/2}\chi_k(2^k\xi,2^k\eta).
\]
The frequency calculation in the proof of
Proposition~\ref{prop:hormander} gives the uniform bound
\[
\left\|(2^{-2k}+\left|\xi\right|^2+\left|\eta\right|^2)^{m/2}
\chi_k(2^k\xi,2^k\eta)\right\|_{B^N_{\lambda,\infty}(\mathbb R^{2n})}
\lesssim1.
\]
Since the frequency factor is independent of $x$, we have
\[
(\varphi_\ell*_x b_k)(x,\xi,\eta)=2^{km}(\varphi_\ell*b)(x)
(2^{-2k}+\left|\xi\right|^2+\left|\eta\right|^2)^{m/2}\chi_k(2^k\xi,2^k\eta),
\]
for $\ell,k\in\mathbb N_0$. Consequently,
\[
\begin{aligned}
\mathop{\rm ess\,sup}_{x\in\mathbb R^n}
\left\|b_k(x,\cdot,\cdot)\right\|_{B^N_{\lambda,\infty}(\mathbb R^{2n})}
&\lesssim2^{km}\left\|b\right\|_{L^\infty(\mathbb R^n)},\\
\left\|b_k\right\|_{B^r_{\mu,v}(B^N_{\lambda,\infty})}
&\lesssim2^{km}\left\|b\right\|_{B^r_{\mu,v}(\mathbb R^n)}.
\end{aligned}
\]
Taking the two weighted suprema yields
\[
\left\|a\right\|_{BS\mathcal B^m_0(r,\mu,v;N,\lambda)}
\lesssim\left\|b\right\|_{L^\infty(\mathbb R^n)}
+\left\|b\right\|_{B^r_{\mu,v}(\mathbb R^n)}.
\]
If $1\leq\mu<\infty$ and
$b\in F^r_{\mu,v}(\mathbb R^n)\cap L^\infty(\mathbb R^n)$,
the same calculation, with the $\ell^v$ quasi-norm inside $L^\mu$, gives
\[
\left\|a\right\|_{BS\mathcal F^m_0(r,\mu,v;N,\lambda)}
\lesssim\left\|b\right\|_{L^\infty(\mathbb R^n)}
+\left\|b\right\|_{F^r_{\mu,v}(\mathbb R^n)}.
\]
In particular,
\[
b\notin C^\infty(\mathbb R^n)
\quad\Longrightarrow\quad
 a\in BS\mathcal B^m_0(r,\mu,v;N,\lambda)
 \setminus C^\infty(\mathbb R^{3n}),
\]
since $a(\cdot,0,0)=b$.
\end{example}

\Needspace{4\baselineskip}
The next example gives a symbol that is not infinitely differentiable
in either the spatial variable or the joint frequency variables.

\begin{example}\label{ex:rough-both}
Let $m\in\mathbb R$, $0\leq\delta<1$, $1\leq\mu\leq\infty$,
$0<v\leq\infty$, $r>n/\mu$, $1\leq\lambda\leq2$, and
$N>2n/\lambda$. Choose $L_x,L_{\xi,\eta}\in\mathbb N$ with
$L_x>r$ and $L_{\xi,\eta}>N$, and noninteger real numbers
$\alpha>L_x$ and $\beta>L_{\xi,\eta}$.
Let $\zeta\in C_c^\infty(\mathbb R^n)$ be equal to one near the
origin, and put
\[
b(x)=\zeta(x)\left|x_1\right|^\alpha,\qquad x\in\mathbb R^n.
\]
Let $e_1=(1,0,\ldots,0)\in\mathbb R^n$.
Choose $\Theta\in C_c^\infty(\mathbb R^{2n})$ equal to one near
$(0,e_1)$ and supported in $B((0,e_1),1/16)$, and define
\[
\tau(\xi,\eta)=\Theta(\xi,\eta)\left|\xi_1\right|^\beta,
\qquad (\xi,\eta)\in\mathbb R^{2n}.
\]
Then $b\in C_c^{L_x}(\mathbb R^n)$ and
$\tau\in C_c^{L_{\xi,\eta}}(\mathbb R^{2n})$, but neither function is smooth
of every order. The symbol
\begin{equation}\label{eq:rough-dyadic-symbol}
a(x,\xi,\eta)=\sum_{j=1}^{\infty}2^{jm}
b(2^{j\delta}x)\tau(2^{-j}\xi,2^{-j}\eta)
\end{equation}
belongs to $BS\mathcal B^m_\delta(r,\mu,v;N,\lambda)$.
However, $a$ is neither $C^\infty$ in the spatial variable $x$ nor
$C^\infty$ in the joint frequency variables $(\xi,\eta)$.

To verify the assertion for \eqref{eq:rough-dyadic-symbol}, note that
$\operatorname{supp}\Theta\subset B((0,e_1),1/16)$ implies
\[
\operatorname{supp}[\tau(2^{-j}\cdot)]
\subset\left\{(\xi,\eta)\in\mathbb R^{2n}:
\frac{15}{16}2^j\leq\left|(\xi,\eta)\right|\leq\frac{17}{16}2^j\right\},
\qquad j\in\mathbb N.
\]
Since
\[
\frac{17}{16}2^j<\frac{15}{16}2^{j+1},\qquad j\in\mathbb N,
\]
these supports are pairwise disjoint. In particular,
\[
\sum_{j=1}^{\infty}
\mathbf 1_{\operatorname{supp}[\tau(2^{-j}\cdot)]}(\xi,\eta)
\leq1,\qquad (\xi,\eta)\in\mathbb R^{2n}.
\]
For $t\geq1$, cancellation of the spatial dyadic kernels and
Young's inequality give
\[
\left\|\varphi_\ell*[b(t\cdot)]\right\|_{L^\mu(\mathbb R^n)}
\lesssim t^{-n/\mu}\min\left\{1,2^{-\ell L_x}t^{L_x}\right\},
\qquad \ell\geq1,
\]
and the first bound $Ct^{-n/\mu}$ suffices for $\ell=0$.
The constants are independent of $t$ and $\ell$; they depend on the
$L^\mu$ norms of $b$ and its derivatives of order $L_x$.
Splitting the sequence at $\ell=\lfloor\log_2 t\rfloor$ yields
\[
\left\|b(t\cdot)\right\|_{B^r_{\mu,v}}
\lesssim t^{r-n/\mu}\leq t^r,
\qquad t\geq1.
\]
Similarly, the bounded compactly supported derivatives of $\tau$
of order $L_{\xi,\eta}>N$ imply
$\tau\in B^N_{\lambda,\infty}(\mathbb R^{2n})$.

For $k\in\mathbb N_0$, the normalized function $b_k$ in
\eqref{eq:profile} has the form
\[
b_k(x,\xi,\eta)=\chi_k(2^k\xi,2^k\eta)
\sum_{\substack{j\in\mathbb N\\\left|j-k\right|\leq d_0}}
2^{jm}b(2^{j\delta}x)\tau(2^{k-j}\xi,2^{k-j}\eta),
\]
for a fixed $d_0\in\mathbb N_0$ covering all support overlaps,
including $k=0$. Smooth multiplication and the finitely many
dilations $2^{k-j}$ are bounded on
$B^N_{\lambda,\infty}(\mathbb R^{2n})$.
Consequently,
\[
\begin{split}
\mathop{\rm ess\,sup}_{x\in\mathbb R^n}
\left\|b_k(x,\cdot,\cdot)\right\|_{B^N_{\lambda,\infty}}
&\lesssim2^{km},\\
\left\|b_k\right\|_{B^r_{\mu,v}(B^N_{\lambda,\infty})}
&\lesssim\sum_{\substack{j\in\mathbb N\\\left|j-k\right|\leq d_0}}
2^{jm}2^{j\delta r}
\lesssim2^{k(m+\delta r)}.
\end{split}
\]
This proves that $a$ belongs to the stated symbol class.
\end{example}

Finally, we show that the inclusion in Proposition~\ref{prop:hormander} is strict.

\begin{corollary}\label{cor:strict-symbol}
Let $m\in\mathbb R$, $1\leq\lambda\leq2$, $r>0$,
$N>2n/\lambda$, and $0\leq\delta<1$. Then
\[
BS^m_{1,\delta}
\subsetneq BS\mathcal B^m_\delta(r,\infty,\infty;N,\lambda).
\]
\end{corollary}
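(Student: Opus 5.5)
The inclusion itself is Proposition~\ref{prop:hormander}. It therefore suffices to exhibit one symbol in $BS\mathcal B^m_\delta(r,\infty,\infty;N,\lambda)$ that does not belong to $BS^m_{1,\delta}$. The natural candidate is the symbol of Example~\ref{ex:rough-coefficient} with $\mu=v=\infty$:
\[
a(x,\xi,\eta)=b(x)(1+\left|\xi\right|^2+\left|\eta\right|^2)^{m/2},
\]
where $b\in B^r_{\infty,\infty}(\mathbb R^n)\cap L^\infty(\mathbb R^n)$ is not $C^\infty$. Note that Example~\ref{ex:rough-coefficient} requires $r>n/\mu$, which for $\mu=\infty$ reduces to $r>0$, exactly the hypothesis here.

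For $b$ we take the function used in Example~\ref{ex:rough-both}: $b(x)=\zeta(x)\left|x_1\right|^\alpha$. Here $\zeta\in C_c^\infty(\mathbb R^n)$ equals one near the origin, and $\alpha$ is a noninteger with $\alpha>L_x>r$, where $L_x\in\mathbb N$. Then $b\in C_c^{L_x}$, so $b$ is bounded. Membership $b\in B^r_{\infty,\infty}$ follows from the vanishing moments of $\varphi_\ell$ and Taylor's formula of order $L_x$, which give
\[
\left\|\varphi_\ell*b\right\|_{L^\infty}\lesssim 2^{-\ell L_x}\max_{\left|\beta\right|\leq L_x}\left\|\partial^\beta b\right\|_{L^\infty},\qquad \ell\geq1.
\]
Since $L_x>r$, this yields $\sup_\ell 2^{\ell r}\left\|\varphi_\ell*b\right\|_{L^\infty}<\infty$. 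This is the $t=1$ case of the computation already carried out in Example~\ref{ex:rough-both}. Example~\ref{ex:rough-coefficient} with $\delta=0$ then gives $a\in BS\mathcal B^m_0(r,\infty,\infty;N,\lambda)$.

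To reach the class with general $\delta\in[0,1)$, we use the definition directly. Condition \eqref{eq:symbol-uniform} does not involve $\delta$. For \eqref{eq:symbol-spatial}, since $\delta r\geq0$ we have $2^{-k(m+\delta r)}\leq2^{-km}$, so the $\delta=0$ bound implies the bound for any $\delta$. Equivalently, this is Proposition~\ref{prop:symbol-embeddings}(ii) with equal $r$, $v$, $N$, $m$, and $\delta_1=0\leq\delta_2=\delta$.

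It remains to show $a\notin BS^m_{1,\delta}$. Elements of $BS^m_{1,\delta}$ lie in $C^\infty(\mathbb R^{3n})$, but $a(x,0,0)=b(x)$. Near the origin $b(x)=\left|x_1\right|^\alpha$ with $\alpha$ noninteger, so $\partial_{x_1}^{\lceil\alpha\rceil}b$ fails to exist (it blows up like $\left|x_1\right|^{\alpha-\lceil\alpha\rceil}$) on the hyperplane $x_1=0$. Hence $a$ is not $C^\infty$. The one point to check carefully is that $a$ is genuinely not smooth, rather than merely having a non-smooth representative: $b$ is continuous, and modifying a continuous non-$C^\infty$ function on a null set cannot produce a $C^\infty$ function, because a smooth function a.e.\ equal to $b$ would equal $b$ everywhere by continuity. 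Beyond this, we do not expect any real obstacle, since the corollary only combines Proposition~\ref{prop:hormander}, Example~\ref{ex:rough-coefficient} and the monotonicity in $\delta$.
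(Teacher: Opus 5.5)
Your proposal is correct and follows essentially the paper's proof: the same symbol $b(x)(1+|\xi|^2+|\eta|^2)^{m/2}$ with $b=\zeta|x_1|^\alpha$, Example~\ref{ex:rough-coefficient} at $\delta=0$, the monotonicity $2^{-k(m+\delta r)}\leq2^{-km}$, and the continuity argument that rules out a smooth representative. The only difference is that you verify $b\in B^r_{\infty,\infty}$ directly, via vanishing moments and Taylor's formula with an integer $L_x\in(r,\alpha)$, whereas the paper takes any noninteger $\alpha>r$ and cites the H\"older characterization $b\in B^\alpha_{\infty,\infty}\subset B^r_{\infty,\infty}$; both work.
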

\begin{proof}
The inclusion follows from Proposition~\ref{prop:hormander}.
Choose a noninteger $\alpha>r$ and a function
$\zeta\in C_c^\infty(\mathbb R^n)$ equal to one near the origin.
Set $b(x)=\zeta(x)\left|x_1\right|^\alpha$, $\nu=\lfloor\alpha\rfloor$, and
$\sigma=\alpha-\nu\in(0,1)$. All derivatives of $b$ of order at most
$\nu$ are bounded, and those of order $\nu$ satisfy
\[
\sup_{\substack{x,y\in\mathbb R^n\\x\ne y}}
\frac{\left|\partial^\gamma b(x)-\partial^\gamma b(y)\right|}{\left|x-y\right|^\sigma}
<\infty,\qquad \gamma\in\mathbb N_0^n,\quad \left|\gamma\right|=\nu.
\]
The H\"older characterization of Besov spaces
(\cite[Section~2.3.5, p.~51]{T83}) and $\alpha>r$ give
\[
b\in B^\alpha_{\infty,\infty}(\mathbb R^n)
\subset B^r_{\infty,\infty}(\mathbb R^n),
\qquad b\in L^\infty(\mathbb R^n).
\]

Example~\ref{ex:rough-coefficient} therefore gives
\[
a(x,\xi,\eta)=b(x)(1+\left|\xi\right|^2+\left|\eta\right|^2)^{m/2}
\in BS\mathcal B^m_0(r,\infty,\infty;N,\lambda).
\]
For the functions $b_k$ defined in \eqref{eq:profile},
$2^{-k(m+\delta r)}\leq2^{-km}$ for every $k\in\mathbb N_0$.
Hence
\[
\sup_{k\geq0}2^{-k(m+\delta r)}
\left\|b_k\right\|_{B^r_{\infty,\infty}(B^N_{\lambda,\infty})}
\leq\sup_{k\geq0}2^{-km}
\left\|b_k\right\|_{B^r_{\infty,\infty}(B^N_{\lambda,\infty})}<\infty.
\]
Since \eqref{eq:symbol-uniform} is independent of $\delta$, this proves
$a\in BS\mathcal B^m_\delta(r,\infty,\infty;N,\lambda)$.

On the other hand, $b$ is not smooth. Indeed, writing
$e_1=(1,0,\ldots,0)\in\mathbb R^n$, for sufficiently small $t>0$ we have
\[
\partial_{x_1}^{\nu+1}b(te_1)
=\left(\prod_{s=0}^{\nu}(\alpha-s)\right)t^{\sigma-1}
\longrightarrow+\infty\qquad\text{as }t\downarrow0.
\]
If $a$ belonged to $BS^m_{1,\delta}$, its smoothness would imply
\[
b=a(\cdot,0,0)\in C^\infty(\mathbb R^n),
\]
which is a contradiction. Moreover, $a$ is continuous in all its
variables. If it agreed almost everywhere with a smooth symbol,
continuity would force equality everywhere, leading to the same
contradiction. Thus
$a\notin BS^m_{1,\delta}$, and the inclusion is strict.
\end{proof}

\section{Proofs of the main results}\label{sec:main-proof}

\subsection{Preliminary estimates}\label{subsec:preliminary}

Our proofs require a series of lemmas. We begin with H\"older's inequalities for mixed spaces with variable
exponents, which follow from \cite[Lemmas~2.16 and~2.17]{XE},
respectively.

\begin{lem}\label{lem:holder}
Let $p,p_1,p_2,q\in\mathcal P_0$, with $p_1^-,p_2^->1$,
and suppose that $1/p(x)=1/p_1(x)+1/p_2(x)$ for almost every
$x\in\mathbb R^n$. For a measurable function $G:\mathbb R^n\to\mathbb C$
and a sequence $(u_k)_{k\in\mathbb N_0}$ of measurable functions
$u_k:\mathbb R^n\to\mathbb C$, both inequalities hold:
\[
\begin{split}
\left\|(u_kG)_{k\in\mathbb N_0}\right\|_{L^{p(\cdot)}(\ell^{q(\cdot)})}
&\lesssim\left\|(u_k)_{k\in\mathbb N_0}\right\|_{L^{p_1(\cdot)}(\ell^{q(\cdot)})}
\left\|G\right\|_{L^{p_2(\cdot)}},\\
\left\|(u_kG)_{k\in\mathbb N_0}\right\|_{\ell^{q(\cdot)}(L^{p(\cdot)})}
&\lesssim\left\|(u_k)_{k\in\mathbb N_0}\right\|_{\ell^{q(\cdot)}(L^{p_1(\cdot)})}
\left\|G\right\|_{L^{p_2(\cdot)}}.
\end{split}
\]
\end{lem}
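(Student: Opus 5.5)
The plan is to reduce both inequalities to the variable-exponent Hölder inequality
\[
\left\|FG\right\|_{L^{p(\cdot)}}\leq C_0\left\|F\right\|_{L^{p_1(\cdot)}}\left\|G\right\|_{L^{p_2(\cdot)}},
\qquad \frac1{p}=\frac1{p_1}+\frac1{p_2}.
\]
This inequality holds for $p,p_1,p_2\in\mathcal P_0$ with a constant $C_0\geq1$ depending only on the exponents. It follows from the pointwise Young inequality
\[
\left|ab\right|^{p(x)}\leq\frac{p(x)}{p_1(x)}\left|a\right|^{p_1(x)}+\frac{p(x)}{p_2(x)}\left|b\right|^{p_2(x)},
\]
applied to normalized $F$ and $G$: the modular of $FG$ is then at most $2$, and the unit-ball property turns this into a norm bound.

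For the Triebel--Lizorkin inequality, $G$ does not depend on $k$, so it factors out of the inner sum pointwise:
\[
\left(\sum_{k}\left|u_k(x)G(x)\right|^{q(x)}\right)^{1/q(x)}
=\left|G(x)\right|\left(\sum_{k}\left|u_k(x)\right|^{q(x)}\right)^{1/q(x)}.
\]
Applying the scalar Hölder inequality above with $F$ equal to the $\ell^{q(\cdot)}$-sum gives the first estimate directly.

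For the Besov inequality, the mixed quasi-norm is defined through a modular, so I would argue by homogeneity. Both sides are homogeneous of degree one in $(u_k)$ and in $G$, so it suffices to assume $\left\|G\right\|_{L^{p_2(\cdot)}}=1$ and $\left\|(u_k)\right\|_{\ell^{q(\cdot)}(L^{p_1(\cdot)})}<1$. Then there are $\lambda_k>0$ with $\sum_k\lambda_k\leq1$ and $\varrho_{p_1(\cdot)}(u_k/\lambda_k^{1/q(\cdot)})\leq1$, hence $\left\|u_k/\lambda_k^{1/q(\cdot)}\right\|_{L^{p_1(\cdot)}}\leq1$ for every $k$. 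The scalar Hölder inequality gives
\[
\left\|u_kG/\lambda_k^{1/q(\cdot)}\right\|_{L^{p(\cdot)}}\leq C_0,
\qquad\text{so}\qquad
\varrho_{p(\cdot)}\bigl(u_kG/(C_0\lambda_k^{1/q(\cdot)})\bigr)\leq1.
\]
The key step is to absorb the constant into the modular weights. Put $\mu_k=C_0^{q^+}\lambda_k$. Since $C_0\geq1$ and $q(\cdot)\leq q^+$,
\[
\mu_k^{1/q(x)}\geq C_0\lambda_k^{1/q(x)}\quad\text{pointwise}.
\]
Consequently $\varrho_{p(\cdot)}(u_kG/\mu_k^{1/q(\cdot)})\leq1$, and so $\varrho_{\ell^{q(\cdot)}(L^{p(\cdot)})}((u_kG))\leq C_0^{q^+}$. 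A modular bounded by a constant $M\geq1$ yields a quasi-norm bound: divide the sequence by $M^{1/q^-}$, after which the modular is at most $1$. This gives the second estimate.

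The only delicate point is exactly this passage from a termwise norm bound to the modular of $\ell^{q(\cdot)}(L^{p(\cdot)})$. The exponent $q$ varies in $x$, so the constant $C_0$ cannot be pulled out uniformly. Replacing it by the worst power $C_0^{q^+}$ resolves this, and that replacement is where the assumption $q^+<\infty$ from $q\in\mathcal P_0$ is used.
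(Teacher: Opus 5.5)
Your proof is correct. It differs from the paper in that the paper gives no argument of its own: it quotes Lemmas~2.16 and~2.17 of \cite{XE}, whereas you derive both inequalities directly.

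Your route has three steps, all valid:
\begin{enumerate}
\item[(a)] You obtain the scalar H\"older inequality from the pointwise Young inequality. The modular bound $2$ gives, for instance, $C_0=2^{1/p^-}$.
\item[(b)] The $L^{p(\cdot)}(\ell^{q(\cdot)})$ estimate is immediate, because $|G(x)|$ factors out of the inner $\ell^{q(x)}$ sum.
\item[(c)] In the $\ell^{q(\cdot)}(L^{p(\cdot)})$ case you use two rescalings: $\mu_k=C_0^{q^+}\lambda_k$, and division by $M^{1/q^-}$. Both are legitimate, since $C_0,M\geq1$ and $q^-\leq q(x)\leq q^+$. Together they give the explicit constant $C_0^{q^+/q^-}$.
\end{enumerate}

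What this buys over the citation is transparency and slightly more generality. Your argument never uses the hypothesis $p_1^-,p_2^->1$ of the statement, since only $p,p_1,p_2,q\in\mathcal P_0$ enters. No log-H\"older regularity is needed. It also shows exactly where finiteness of $q^+$ and positivity of $q^-$ are used. The citation is simply shorter.

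One detail to add: you take $\lambda_k>0$ with $\sum_k\lambda_k\leq1$ and $\varrho_{p_1(\cdot)}(u_k/\lambda_k^{1/q(\cdot)})\leq1$.
\begin{enumerate}
\item[(i)] If $u_k\neq0$, the infimum $\lambda_k^*$ defining the $k$-th summand is positive. It is attained, by monotone convergence as $\lambda\downarrow\lambda_k^*$.
\item[(ii)] If $u_k=0$ almost everywhere, the infimum is $0$ and no positive $\lambda_k$ attains it. Such terms contribute nothing to the modular of $(u_kG)$, so drop them, or allow $\sum_k\lambda_k\leq1+\epsilon$.
\end{enumerate}
This is a routine gap and does not affect the conclusion.
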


The next lemma is about embeddings. The Besov embedding is stated in \cite[Section~6, before
Proposition~6.9]{c}. The Triebel--Lizorkin embedding follows from
the Besov embedding and \cite[Theorem~6.1(iii)]{c}.

\begin{lem}\label{lem:positive-embedding}
Let $p,q\in\mathcal P_0^{\log}$ with $p^->1$, and let
$\alpha\in C^{\log}_{\rm loc}\cap L^\infty$ with
$\alpha^-:=\operatorname*{ess\,inf}_{x\in\mathbb R^n}\alpha(x)>0$.
Then
\[
B^{\alpha(\cdot)}_{p(\cdot),q(\cdot)}\hookrightarrow L^{p(\cdot)},
\qquad
F^{\alpha(\cdot)}_{p(\cdot),q(\cdot)}\hookrightarrow L^{p(\cdot)}.
\]
Both embeddings are continuous, with constants depending only on
$p$, $q$, $\alpha$, and the dyadic kernels.
\end{lem}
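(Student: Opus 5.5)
The plan is to prove the Besov embedding $B^{\alpha(\cdot)}_{p(\cdot),q(\cdot)}\hookrightarrow L^{p(\cdot)}$ directly, and then deduce the Triebel--Lizorkin case from an embedding $F\hookrightarrow B$ with a small loss of smoothness.

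For $f\in B^{\alpha(\cdot)}_{p(\cdot),q(\cdot)}$, write $f=\sum_{k\ge0}\varphi_k*f$ in $\mathcal S'$. Since $p^->1$, $L^{p(\cdot)}$ is a Banach function space, so it suffices to show
\[
\sum_{k\ge0}\left\|\varphi_k*f\right\|_{L^{p(\cdot)}}\lesssim\left\|f\right\|_{B^{\alpha(\cdot)}_{p(\cdot),q(\cdot)}}.
\]
Then the series converges absolutely in $L^{p(\cdot)}$, and its limit coincides with $f$ as a distribution. Pointwise we have
\[
|\varphi_k*f(x)|=2^{-k\alpha(x)}\,2^{k\alpha(x)}|\varphi_k*f(x)|\le 2^{-k\alpha^-}\,2^{k\alpha(x)}|\varphi_k*f(x)|,
\]
so $\left\|\varphi_k*f\right\|_{L^{p(\cdot)}}\le 2^{-k\alpha^-}\left\|2^{k\alpha(\cdot)}\varphi_k*f\right\|_{L^{p(\cdot)}}$. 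It remains to bound each single term $\left\|u_k\right\|_{L^{p(\cdot)}}$, where $u_k=2^{k\alpha(\cdot)}\varphi_k*f$, by $\left\|(u_j)_j\right\|_{\ell^{q(\cdot)}(L^{p(\cdot)})}$.

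This single-term bound is the one delicate point, because the mixed modular involves the variable $q$. Suppose the mixed norm is at most $1$. Then the modular is at most $1$, so each $\lambda_k:=\inf\{\lambda>0:\varrho_{p(\cdot)}(u_k/\lambda^{1/q(\cdot)})\le1\}$ satisfies $\lambda_k\le1$. For $\lambda\le1$ we have $\lambda^{1/q(x)}\le1$, hence $\varrho_{p(\cdot)}(u_k)\le\varrho_{p(\cdot)}(u_k/\lambda^{1/q(\cdot)})\le1$ for all admissible $\lambda$ slightly above $\lambda_k$, taking $\lambda\downarrow\lambda_k$ and $\lambda\le1$ (if $\lambda_k=1$, use $\lambda=1+\epsilon$ and let $\epsilon\to0$ by monotone convergence). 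This gives $\left\|u_k\right\|_{L^{p(\cdot)}}\le1$, and homogeneity yields $\left\|u_k\right\|_{L^{p(\cdot)}}\le\left\|(u_j)\right\|_{\ell^{q(\cdot)}(L^{p(\cdot)})}$ in general. Summing the geometric series $\sum_k 2^{-k\alpha^-}$ finishes the Besov case, with constant depending only on $\alpha^-$.

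For the Triebel--Lizorkin case, I invoke \cite[Theorem~6.1(iii)]{c}, which gives $F^{\alpha(\cdot)}_{p(\cdot),q(\cdot)}\hookrightarrow B^{\alpha(\cdot)-\varepsilon}_{p(\cdot),\infty}$ (or equivalently $\hookrightarrow B^{\alpha(\cdot)}_{p(\cdot),\max\{p,q\}}$-type embeddings with $\varepsilon$-loss). Choose $\varepsilon=\alpha^-/2$. Then $\alpha-\varepsilon$ is still locally log-H\"older, bounded, and has positive infimum, so the Besov case applies; here $q$ is replaced by the constant $\infty$, where the single-term bound is immediate. Alternatively, one can argue directly, since $\left\|\varphi_k*f\,2^{k\alpha}\right\|_{L^{p(\cdot)}}\le\left\|\sup_j|u_j|\right\|_{L^{p(\cdot)}}\le\left\|f\right\|_F$ because $\ell^{q(x)}\subset\ell^\infty$ pointwise. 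This direct route avoids the citation entirely and is what I would actually write. The main obstacle is therefore only the modular bookkeeping in the Besov single-term estimate above.
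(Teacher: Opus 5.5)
Your proof is correct, but it takes a different route from the paper. The paper gives no argument of its own: it cites \cite{c} (Section~6) for the Besov embedding. It then obtains the Triebel--Lizorkin case by composing with the lossless embedding $F^{\alpha(\cdot)}_{p(\cdot),q(\cdot)}\hookrightarrow B^{\alpha(\cdot)}_{p(\cdot),\max\{p(\cdot),q(\cdot)\}}$ of \cite[Theorem~6.1(iii)]{c}. That result involves no $\varepsilon$-loss, so your description of the citation is slightly off, but this is harmless because you abandon that route.

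You instead prove both embeddings from scratch. The ingredients are the factor $2^{-k\alpha^-}$, the triangle inequality in the normed space $L^{p(\cdot)}$, and a single-term bound. For $B$ this is $\|u_k\|_{L^{p(\cdot)}}\leq\|(u_j)_j\|_{\ell^{q(\cdot)}(L^{p(\cdot)})}$, obtained from the modular. For $F$ it is the pointwise domination $|u_k|\leq(\sum_j|u_j|^{q(\cdot)})^{1/q(\cdot)}$ together with the lattice property of $L^{p(\cdot)}$. This gives a self-contained proof with the explicit constant $(1-2^{-\alpha^-})^{-1}$, independent of $p$ and $q$. It also shows that no log-H\"older regularity is needed for this particular embedding; only $p^-\geq1$ and $\alpha^->0$ enter. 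The paper's route is shorter but delegates everything to \cite{c}.

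Two small steps deserve a line each in your write-up:
\begin{enumerate}
\item[(a)] You assert that mixed norm at most $1$ forces modular at most $1$. This is true, but it needs a left-continuity argument. It is simpler to apply your argument to $(u_j/\mu)_j$ for every $\mu>\|(u_j)_j\|_{\ell^{q(\cdot)}(L^{p(\cdot)})}$. For such $\mu$, $\varrho_{\ell^{q(\cdot)}(L^{p(\cdot)})}((u_j/\mu)_j)\leq1$ follows directly from the definition of the infimum and monotonicity; then let $\mu$ decrease.
\item[(b)] To identify the $L^{p(\cdot)}$-limit of the partial sums with $f$, you need $L^{p(\cdot)}\hookrightarrow\mathcal S'$. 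This follows, e.g., from H\"older's inequality against test functions $\phi\in\mathcal S\subset L^{p'(\cdot)}$.
\end{enumerate}
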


The following estimates are given in \cite[Lemma~3.4]{AC}.

\begin{lem}\label{lem:discrete}
Let $p,q\in\mathcal P_0$ with $p^->1$, and let $\epsilon>0$.
For a sequence of nonnegative measurable functions
$u_k:\mathbb R^n\to[0,\infty)$, $k\in\mathbb N_0$, put
\[
v_j(x)=\sum_{k=0}^\infty2^{-\epsilon\left|j-k\right|}u_k(x),
\qquad j\in\mathbb N_0,\quad x\in\mathbb R^n.
\]
Then
\[
\begin{split}
\left\|(v_j)_{j\in\mathbb N_0}\right\|_{L^{p(\cdot)}(\ell^{q(\cdot)})}
&\lesssim\left\|(u_k)_{k\in\mathbb N_0}\right\|_{L^{p(\cdot)}(\ell^{q(\cdot)})},\\
\left\|(v_j)_{j\in\mathbb N_0}\right\|_{\ell^{q(\cdot)}(L^{p(\cdot)})}
&\lesssim\left\|(u_k)_{k\in\mathbb N_0}\right\|_{\ell^{q(\cdot)}(L^{p(\cdot)})}.
\end{split}
\]
\end{lem}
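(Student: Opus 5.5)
The plan is to treat the two mixed spaces separately. The Triebel--Lizorkin type space is handled pointwise in $x$. The Besov type space is handled through its modular, using the convexity supplied by $p^->1$.

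\emph{The $L^{p(\cdot)}(\ell^{q(\cdot)})$ estimate.} Fix $x$ and write $q=q(x)$.
\begin{itemize}
\item If $q\geq1$, Young's inequality for discrete convolution on $\mathbb Z$ gives
\[
\Bigl(\sum_j v_j(x)^q\Bigr)^{1/q}\leq\Bigl(\sum_{h\in\mathbb Z}2^{-\epsilon|h|}\Bigr)\Bigl(\sum_k u_k(x)^q\Bigr)^{1/q}.
\]
\item If $q<1$, the $q$-triangle inequality gives $v_j(x)^q\leq\sum_k2^{-\epsilon q|j-k|}u_k(x)^q$. Summing over $j$ yields the factor $\sum_h2^{-\epsilon q^-|h|}$.
\end{itemize}
In both cases the constant depends only on $\epsilon$ and $q^-$, and the pointwise bound $\|(v_j(x))\|_{\ell^{q(x)}}\leq C\|(u_k(x))\|_{\ell^{q(x)}}$ holds uniformly in $x$. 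Since the $L^{p(\cdot)}$ quasi-norm is monotone and homogeneous, the first inequality follows. No regularity of $p$ or $q$ is needed here.

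\emph{The $\ell^{q(\cdot)}(L^{p(\cdot)})$ estimate.} This space is only quasi-normed and its modular is not a sum of pointwise quantities, so I would argue directly with the modular.
\begin{enumerate}
\item By homogeneity, assume $\|(u_k)\|_{\ell^{q(\cdot)}(L^{p(\cdot)})}<1$. Then there are numbers $\delta_k>0$ with $\varrho_{p(\cdot)}(u_k/\delta_k^{1/q(\cdot)})\leq1$ and $\sum_k\delta_k\leq1$. If an infimum is $0$, replace it by a small positive number $2^{-k}\theta$ and let $\theta\to0$ at the end.
\item Choose $\epsilon'>0$ with $\epsilon'/q^-\leq\epsilon/2$, and set $\eta_j=\sum_k2^{-\epsilon'|j-k|}\delta_k$.
\item Since $\eta_j\geq2^{-\epsilon'|j-k|}\delta_k$, we get pointwise
\[
\delta_k^{1/q(x)}\leq2^{\epsilon|j-k|/2}\,\eta_j^{1/q(x)},
\qquad\text{hence}\qquad
\frac{v_j}{\eta_j^{1/q(\cdot)}}\leq\sum_k2^{-\epsilon|j-k|/2}\,\frac{u_k}{\delta_k^{1/q(\cdot)}} .
\]
\item Put $c=\bigl(\sum_h2^{-\epsilon|h|/2}\bigr)^{-1}$. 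Then $c\,v_j/\eta_j^{1/q(\cdot)}$ is bounded by a convex combination of the functions $u_k/\delta_k^{1/q(\cdot)}$.
\item Because $p(x)\geq p^->1$, the map $t\mapsto t^{p(x)}$ is convex. Jensen's inequality, applied pointwise and then integrated, gives $\varrho_{p(\cdot)}(c\,v_j/\eta_j^{1/q(\cdot)})\leq1$.
\item Consequently the $j$-th term in the modular of $(c\,v_j)$ is at most $\eta_j$, and
\[
\sum_j\eta_j\leq\Bigl(\sum_h2^{-\epsilon'|h|}\Bigr)\sum_k\delta_k=:C_0 .
\]
\end{enumerate}
It remains to turn the modular bound $\varrho_{\ell^{q(\cdot)}(L^{p(\cdot)})}((c\,v_j))\leq C_0$ into a norm bound. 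I would use the scaling property of this modular: dividing the sequence by $M\geq1$ multiplies the modular by at most $M^{-q^-}$. Choosing $M=C_0^{1/q^-}$ gives $\|(v_j)\|\leq c^{-1}C_0^{1/q^-}$.

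\emph{Expected obstacle.} The main difficulty is the second estimate. The argument must avoid any triangle inequality for infinitely many shifted sequences in $\ell^{q(\cdot)}(L^{p(\cdot)})$, which is generally unavailable for this quasi-norm. The decisive step is step 3, the comparison $\delta_k^{1/q(x)}\lesssim2^{\epsilon|j-k|/2}\eta_j^{1/q(x)}$. It costs half of the decay $2^{-\epsilon|j-k|}$, which is why $\epsilon'$ must be chosen depending on $q^-$. The remaining points are routine bookkeeping: the degenerate case $\delta_k=0$ and the final modular-to-norm rescaling.
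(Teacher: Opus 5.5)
Your proof is correct, and it differs from the paper, which gives no argument for this lemma and simply imports it from \cite[Lemma~3.4]{AC}.

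Your version is self-contained. You handle the $L^{p(\cdot)}(\ell^{q(\cdot)})$ inequality pointwise: discrete Young for $q(x)\geq1$, the $q(x)$-triangle inequality for $q(x)<1$, then monotonicity of the $L^{p(\cdot)}$ quasi-norm. You handle the $\ell^{q(\cdot)}(L^{p(\cdot)})$ inequality at the level of the modular. The smeared weights $\eta_j=\sum_k2^{-\epsilon'|j-k|}\delta_k$ pay half the decay to convert $\delta_k^{1/q(x)}$ into $\eta_j^{1/q(x)}$. Jensen then gives $\varrho_{p(\cdot)}(c\,v_j/\eta_j^{1/q(\cdot)})\leq1$. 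Finally, the scaling $\varrho(u/M)\leq M^{-q^-}\varrho(u)$ for $M\geq1$ turns the modular bound into a norm bound. Each of these steps holds as you state it.

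Compared with the citation, your route makes the dependence on the hypotheses explicit:
\begin{itemize}
\item the constant depends only on $\epsilon$ and $q^-$;
\item no log-H\"older regularity enters;
\item $p$ is used only through convexity of $t\mapsto t^{p(x)}$, so $p^-\geq1$ already suffices.
\end{itemize}
If in step~5 you replace Jensen by the $r$-triangle inequality $\|\sum_kf_k\|_{L^{p(\cdot)}}^r\leq\sum_k\|f_k\|_{L^{p(\cdot)}}^r$ with $r=\min\{1,p^-\}$, the same argument works for every $p\in\mathcal P_0$. The citation buys brevity.

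One bookkeeping point: set $C_0:=\sum_{h\in\mathbb Z}2^{-\epsilon'|h|}\geq1$, using $\sum_k\delta_k\leq1$, rather than the product with $\sum_k\delta_k$. Then $M=C_0^{1/q^-}\geq1$, so the scaling property applies as stated.
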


For the following convolution and Peetre estimates, use the spatial
kernels $\varphi_j=\varphi_j^{(n)}$, $j\in\mathbb N_0$, fixed in
Section~\ref{sec:definitions}.
For $M>n$ and $j\in\mathbb N_0$, put
\[
\eta_{j,M}(x)=2^{jn}(1+2^j\left|x\right|)^{-M},\qquad x\in\mathbb R^n.
\]

The convolution estimates below are given in
\cite[Lemma~2.2]{AC}, and the Peetre characterizations are given in
\cite[Theorem~3.1]{AC}, applied to the weights $2^{k\alpha(x)}$.

\begin{lem}\label{lem:standard}
Let $p,q\in\mathcal P_0^{\log}$ with $p^-,q^->1$, and let
$\alpha\in C^{\log}_{\rm loc}\cap L^\infty$.
Fix local log-H\"older constants $c_q,c_\alpha\geq0$ for $1/q$ and
$\alpha$, respectively, and choose $c_\alpha$ large enough that
\[
2^{j\alpha(x)}\leq C2^{j\alpha(y)}(1+2^j\left|x-y\right|)^{c_\alpha},
\qquad j\in\mathbb N_0,\quad x,y\in\mathbb R^n,
\]
where $C>0$ is independent of $j$, $x$, and $y$.
The following facts hold.

\textup{(i)} For $M>n+c_q$, convolution
$(u_j)_{j\in\mathbb N_0}\mapsto(\eta_{j,M}*u_j)_{j\in\mathbb N_0}$ is bounded on both
$L^{p(\cdot)}(\ell^{q(\cdot)})$ and
$\ell^{q(\cdot)}(L^{p(\cdot)})$.

\textup{(ii)} For $f\in\mathcal S'(\mathbb R^n)$, $k\in\mathbb N_0$,
and $A>0$, define
\[
f_{k,A}^*(x)=\sup_{y\in\mathbb R^n}
\frac{\left|(\varphi_k*f)(x-y)\right|}{(1+2^k\left|y\right|)^A},\qquad x\in\mathbb R^n.
\]
The quasi-norms obtained by replacing $\left|(\varphi_k*f)\right|$ with $f_{k,A}^*$
are equivalent to the quasi-norms of
$F^{\alpha(\cdot)}_{p(\cdot),q(\cdot)}$ and
$B^{\alpha(\cdot)}_{p(\cdot),q(\cdot)}$, respectively, if
\[
\begin{aligned}
A&>\frac n{\min(p^-,q^-)}+c_\alpha
&&\text{for the Triebel--Lizorkin space},\\
A&>\frac n{p^-}+c_q+c_\alpha
&&\text{for the Besov space}.
\end{aligned}
\]
\end{lem}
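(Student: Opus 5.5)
Both parts are the variable-exponent results of \cite{AC}. If I were to prove them directly rather than cite them, I would follow the standard route: reduce everything to estimates for $\eta_{j,M}$-convolutions, and then derive \textup{(ii)} from \textup{(i)} through a Peetre-type pointwise inequality.

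For \textup{(i)} in $L^{p(\cdot)}(\ell^{q(\cdot)})$, I would use the radially decreasing integrable majorant. Since $M>n$, it gives $\eta_{j,M}*|u_j|\lesssim\mathcal M u_j$ pointwise and uniformly in $j$, where $\mathcal M$ is the Hardy--Littlewood maximal operator. The variable Fefferman--Stein vector-valued maximal inequality on $L^{p(\cdot)}(\ell^{q(\cdot)})$ then applies; it is valid for $p,q\in\mathcal P_0^{\log}$ with $p^-,q^->1$.

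In $\ell^{q(\cdot)}(L^{p(\cdot)})$ this pointwise reduction is not enough. The modular involves $\lambda_j^{1/q(\cdot)}$, and the exponent cannot be pulled outside the convolution. I would follow the Almeida--Hästö argument and normalize so that the modular is at most $1$, which forces $\lambda_j\leq1$. The log-H\"older continuity of $1/q$ lets one replace $\lambda_j^{1/q(y)}$ by $\lambda_j^{1/q(x)}$ at the cost of a factor $(1+2^j|x-y|)^{c_q}$, up to an additive error of size $2^{-j}$-type that is summable in $j$. That factor is absorbed by the kernel precisely when $M-c_q>n$, which is where the hypothesis $M>n+c_q$ enters. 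The boundedness of $\mathcal M$ on $L^{p(\cdot)}$, using $p^->1$, then controls each term. I expect this modular bookkeeping, in particular handling the case of small $\lambda_j$, to be the main obstacle.

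For \textup{(ii)}, one direction is trivial because $|(\varphi_k*f)(x)|\leq f^*_{k,A}(x)$. For the reverse I would use a local reproducing formula built from $\varphi_j$ with vanishing moments, together with the Str\"omberg--Torchinsky/Rychkov argument. For every $0<t\leq1$ and every large $L$, this gives
\[
\bigl(f^*_{k,A}(x)\bigr)^t\lesssim\sum_{j\geq k}2^{-(j-k)Lt}\,2^{jn}\int_{\mathbb R^n}\frac{|(\varphi_j*f)(y)|^t}{(1+2^j|x-y|)^{At}}\,\mathrm dy .
\]
I would then multiply by $2^{k\alpha(x)t}$ and move the weight inside the integral using the $c_\alpha$-estimate, which costs $(1+2^j|x-y|)^{c_\alpha t}$ and a factor $2^{(k-j)\alpha^- t}$ that is harmless for large $L$. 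This identifies the right-hand side as $\eta_{j,(A-c_\alpha)t}*\bigl(2^{j\alpha}|\varphi_j*f|\bigr)^t$.

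Next I would choose $t<\min(p^-,q^-)$ so that $p/t$ and $q/t$ still exceed $1$, and apply \textup{(i)} in the spaces with exponents $p/t$ and $q/t$. This requires $(A-c_\alpha)t>n$ in the Triebel--Lizorkin case, and $(A-c_\alpha)t>n+c_{q}t$ in the Besov case, since the log-H\"older constant of $t/q$ is $c_qt$. Letting $t$ approach $p^-$ (or $\min(p^-,q^-)$) recovers the two stated thresholds on $A$. Finally, Lemma~\ref{lem:discrete} sums the geometric factor $2^{-(j-k)\epsilon}$ over $j\geq k$, and taking $t$-th roots completes the norm equivalence.
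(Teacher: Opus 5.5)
Citing \cite[Lemma~2.2]{AC} and \cite[Theorem~3.1]{AC}, as your first sentence does, is all the paper itself does. Your direct sketch, however, has a genuine gap in the $L^{p(\cdot)}(\ell^{q(\cdot)})$ half of (i).

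\textbf{The gap.} The vector-valued Fefferman--Stein inequality you invoke holds in $L^{p(\cdot)}(\ell^{q})$ for constant $q$, but it fails as soon as $q$ is not constant. This is why \cite{DHR} work with $\eta_{j,M}$ rather than $\mathcal M$. Here is a concrete counterexample. Take $p\equiv2$ and a smooth $q$ with $q=2$ on $B(0,1)$ and $q=4$ outside $B(0,2)$. Let $u_j=(j+1)^{-1/3}\mathbf 1_{B(x_1,1)}$ with $|x_1|=10$.
\begin{itemize}
\item Only $\sum_j(j+1)^{-4/3}<\infty$ enters the norm of $(u_j)_{j\in\mathbb N_0}$, so the sequence lies in $L^{p(\cdot)}(\ell^{q(\cdot)})$.
\item On $B(0,1)$, where $q=2$, we have $\mathcal Mu_j\geq12^{-n}(j+1)^{-1/3}$. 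Hence $\sum_j(\mathcal Mu_j)^2=\infty$ there.
\end{itemize}

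Your pointwise bound $\eta_{j,M}*|u_j|\lesssim\mathcal Mu_j$ discards exactly the information that rescues the estimate. At index $j$ the kernel has width $2^{-j}$. Mass carried a distance $\gtrsim1$ into a region with a different $q$ is therefore damped by $2^{-j(M-n)}$, which is summable in $j$. Within distance $2^{-j}$, log-H\"older continuity makes $q$ essentially constant. So the Triebel--Lizorkin case needs the same kind of scale-matched argument you reserved for $\ell^{q(\cdot)}(L^{p(\cdot)})$; this is the convolution theorem of \cite{DHR}, valid for $M>n$. The Triebel--Lizorkin half of (ii) inherits this gap.

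\textbf{Threshold bookkeeping in (ii).} Two further points affect whether you reach the stated values of $A$.
\begin{itemize}
\item You really need the sharper form $M>n$ in $L^{p(\cdot)}(\ell^{q(\cdot)})$. The form $M>n+c_q$ stated in (i), applied with exponents $p/t,q/t$, gives only $A>n/\min(p^-,q^-)+c_q+c_\alpha$.
\item The Besov threshold $n/p^-$ needs the $\ell^{q/t}(L^{p/t})$ bound without $(q/t)^->1$. So in that case restrict $t$ only by $t<p^-$; your modular argument permits this.
\item You state the Rychkov-type inequality only for $0<t\leq1$, yet let $t\to\min(p^-,q^-)>1$. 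With $t\leq1$ you reach only $A>n+c_\alpha$.
\end{itemize}
The last point is easy to repair. Since $\widehat\varphi_k$ has compact support, $\varphi_k*f=\sum_{|j-k|\leq1}\varphi_k*(\varphi_j*f)$. Apply H\"older's inequality with the uniformly $L^1$-bounded kernel $\varphi_k$, and use $(1+2^k|x-y-z|)^{-At}(1+2^k|y|)^{-At}\leq(1+2^k|x-z|)^{-At}$. This gives
\[
\bigl(f^*_{k,A}(x)\bigr)^t\lesssim\sum_{|j-k|\leq1}\eta_{j,At}*\bigl|\varphi_j*f\bigr|^t(x)
\]
for every $t\geq1$ and $A>0$, with no absorption step needed.
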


The Besov and Triebel--Lizorkin estimates in the following lemma
follow from \cite[Theorem~8.1]{c} and
\cite[Theorem~2.15]{XE}, respectively.

\begin{lem}\label{lem:balls}
Let $p,q\in\mathcal P_0^{\log}$ with $p^-,q^->1$, let
$s\in C^{\log}_{\rm loc}\cap L^\infty$ with $s^->0$, and let $C_0>0$.
Suppose that $(h_k)_{k\in\mathbb N_0}$ is a sequence of locally
integrable tempered distributions satisfying
\[
\operatorname{supp}\widehat h_k\subset
\left\{\omega\in\mathbb R^n:\left|\omega\right|\leq C_0 2^k\right\},\qquad k\in\mathbb N_0.
\]
Then
\begin{equation}\label{eq:ball-synthesis}
\begin{split}
\left\|\sum_{k=0}^{\infty}h_k\right\|_{F^{s(\cdot)}_{p(\cdot),q(\cdot)}}
&\lesssim\left\|(2^{ks(\cdot)}h_k)_{k\in\mathbb N_0}\right\|_{L^{p(\cdot)}(\ell^{q(\cdot)})},\\
\left\|\sum_{k=0}^{\infty}h_k\right\|_{B^{s(\cdot)}_{p(\cdot),q(\cdot)}}
&\lesssim\left\|(2^{ks(\cdot)}h_k)_{k\in\mathbb N_0}\right\|_{\ell^{q(\cdot)}(L^{p(\cdot)})}.
\end{split}
\end{equation}
If the respective right-hand side is finite, the series converges in
$L^{p(\cdot)}(\mathbb R^n)$ and in $\mathcal S'(\mathbb R^n)$.
\end{lem}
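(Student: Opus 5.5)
The plan is the standard ``ball synthesis'' argument: localize $\sum_k h_k$ with the dyadic kernels $\varphi_j$, keep only the terms $k\gtrsim j$, trade the weight $2^{js(\cdot)}$ for $2^{ks(\cdot)}$ at the cost of the geometric factor $2^{-(k-j)s^-}$, and then sum with Lemma~\ref{lem:discrete}.

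\textbf{Step 1 (only $k\geq j-k_0$ contributes).} Since $\operatorname{supp}\widehat\varphi_j\subset\{2^{j-1}\leq|\omega|\leq2^{j+1}\}$ for $j\geq1$ and $\operatorname{supp}\widehat h_k\subset\{|\omega|\leq C_0 2^k\}$, there is $k_0\in\mathbb N_0$, depending only on $C_0$, such that $\varphi_j*h_k=0$ whenever $k<j-k_0$. Hence, formally,
\[
\varphi_j*\sum_{k}h_k=\sum_{k\geq j-k_0}\varphi_j*h_k,\qquad j\in\mathbb N_0.
\]
I will first justify this for finite sums. If the right-hand side of \eqref{eq:ball-synthesis} is finite, convergence in $L^{p(\cdot)}$ follows from the bound $\|h_k\|_{L^{p(\cdot)}}\lesssim2^{-ks^-}\|2^{ks(\cdot)}h_k\|_{L^{p(\cdot)}}$ together with $s^->0$. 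Convergence in $\mathcal S'$ then follows from $L^{p(\cdot)}\hookrightarrow\mathcal S'$, and the estimate passes to the limit by Fatou.

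\textbf{Step 2 (moving the weight).} Since $|\varphi_j|\lesssim\eta_{j,M}$ for every $M$, the log-H\"older estimate at scale $2^j$ gives
\[
2^{js(x)}\lesssim2^{js(y)}(1+2^j|x-y|)^{c_s}.
\]
For $k\geq j-k_0$ we also have $2^{js(y)}\lesssim2^{-(k-j)s^-}2^{ks(y)}$. Together these yield
\[
2^{js(x)}|\varphi_j*h_k(x)|\lesssim2^{-(k-j)s^-}\,\eta_{j,M-c_s}*\bigl(2^{ks(\cdot)}|h_k|\bigr)(x).
\]
Here $M$ is chosen so large that $M-c_s>n+c_q$. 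Taking the weight at scale $2^j$ rather than $2^k$ is essential: the other choice costs a factor $2^{(k-j)c_s}$, which cannot be absorbed when $c_s\geq s^-$.

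\textbf{Step 3 (summation; the main obstacle).} It remains to bound, in $L^{p(\cdot)}(\ell^{q(\cdot)})$ and in $\ell^{q(\cdot)}(L^{p(\cdot)})$, the sequence
\[
w_j=\sum_{k\geq j-k_0}2^{-(k-j)s^-}\,\eta_{j,M'}*g_k,\qquad g_k=2^{ks(\cdot)}|h_k|,\quad M'=M-c_s.
\]
If the convolution were $\eta_{k,M'}*g_k$, Lemma~\ref{lem:standard}(i) followed by Lemma~\ref{lem:discrete} would finish the proof. The difficulty is the index mismatch: the kernel sits at scale $j$ while the function carries index $k$. For variable $q$ one cannot simply dominate $\eta_{j,M'}*g_k$ by the Hardy--Littlewood maximal function and appeal to a vector-valued maximal inequality.

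Two remedies are available, and I would try them in this order.
\begin{itemize}
\item[(a)] Keep a small part $\varepsilon<s^-$ of the decay and compare kernels via the pointwise bound $\eta_{j,M'}\leq2^{(k-j)(M'-n)}\eta_{k,M'}$. This loses too much when $M'-n>s^-$, so it works directly only in special ranges.
\item[(b)] Use the $r$-trick. Since $\widehat h_k$ is supported in a ball of radius $\sim2^k$, for small $t>0$ one has $|h_k|\lesssim(\eta_{k,Mt}*|h_k|^t)^{1/t}$, and $\eta_{j,M}*\eta_{k,M}\lesssim\eta_{j,M}$ for $j\leq k$. This reorganizes $w_j$ into the convolution form for which \cite[Lemma~2.2]{AC} is proved; its proof treats kernels of the shape $\eta_{\nu,M}$ applied to shifted sequences with a geometric gain.
\end{itemize}
In the Besov case there is a simpler route. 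The operators $u\mapsto\eta_{j,M'}*u$ are bounded on $L^{p(\cdot)}$ uniformly in $j$, because each is dominated by the maximal operator and $p^->1$. The mismatch therefore causes no problem at the level of a single $L^{p(\cdot)}$ norm, and the $\ell^{q(\cdot)}$ summation is then carried out by Lemma~\ref{lem:discrete}. This is essentially the proof of \cite[Theorem~8.1]{c}.

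For the Triebel--Lizorkin case I would follow \cite[Theorem~2.15]{XE}, which handles exactly this mismatch by combining the $r$-trick with the variable-exponent convolution lemma. With either remedy, Lemma~\ref{lem:discrete} finally sums the geometric factor $2^{-(k-j)s^-}$ over $k$, which proves \eqref{eq:ball-synthesis}.
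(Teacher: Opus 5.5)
\textbf{Verdict: there is a genuine gap in Step~3, which your proposal does not close.}

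Your Steps~1 and~2 are correct. Step~3, the only nontrivial step, is not carried out:
\begin{itemize}
\item by your own account, remedy (a) fails;
\item remedy (b) is only gestured at;
\item in the end you fall back on \cite[Theorem~8.1]{c} and \cite[Theorem~2.15]{XE}.
\end{itemize}
That citation is exactly, and only, what the paper does for this lemma.

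\textbf{The missing idea.} The ``index mismatch'' you identify is not an obstacle. The kernel $\eta_{j,M'}$ depends only on the output index $j$, and all terms are nonnegative. So by linearity and monotone convergence you may pull the $k$-sum inside the convolution:
\[
w_j=\eta_{j,M'}*G_j,\qquad
G_j:=\sum_{k\geq j-k_0}2^{-(k-j)s^-}g_k\lesssim\sum_{k\in\mathbb N_0}2^{-|k-j|s^-}g_k .
\]
Now apply the paper's two lemmas in this order:
\begin{itemize}
\item Lemma~\ref{lem:standard}(i) applies to the sequence $(G_j)_{j\in\mathbb N_0}$; your choice $M'>n+c_q$ is precisely its hypothesis. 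It gives $\|(w_j)_j\|\lesssim\|(G_j)_j\|$ in both $L^{p(\cdot)}(\ell^{q(\cdot)})$ and $\ell^{q(\cdot)}(L^{p(\cdot)})$.
\item Lemma~\ref{lem:discrete} with $\epsilon=s^-$ then gives $\|(G_j)_j\|\lesssim\|(g_k)_k\|$.
\end{itemize}
This is the reverse of the order you tried: sum first, then convolve. No $r$-trick is needed, since $p^-,q^->1$. The lemma then follows from results already recorded in the paper, not only by citation.

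\textbf{The Besov shortcut should be discarded.} When $q$ is variable, the quasi-norm of $\ell^{q(\cdot)}(L^{p(\cdot)})$ is not a function of the scalar sequence $(\|u_j\|_{L^{p(\cdot)}})_j$. Each term of its modular,
\[
\inf\{\lambda_j>0:\varrho_{p(\cdot)}(u_j/\lambda_j^{1/q(\cdot)})\leq1\},
\]
depends on the profile of $u_j$, not just on its $L^{p(\cdot)}$ norm.
\begin{itemize}
\item A termwise $L^{p(\cdot)}$ bound through the maximal operator, followed by a scalar summation, therefore does not control $\|(w_j)_j\|_{\ell^{q(\cdot)}(L^{p(\cdot)})}$.
\item Lemma~\ref{lem:discrete} is a statement about sequences of functions. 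It needs the pointwise domination above, not bounds on individual norms.
\item This is why Lemma~\ref{lem:standard}(i) is formulated with the kernels $\eta_{j,M}$ and the extra condition $M>n+c_q$, rather than with $\mathcal M$.
\end{itemize}
With the interchange above, the Besov and Triebel--Lizorkin cases are handled by one and the same argument.
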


For the following weighted joint-frequency kernel estimate, $\varphi_j^{(2n)}$,
$j\in\mathbb N_0$, denotes the dyadic kernel on the joint variables
$(\xi,\eta)\in\mathbb R^{2n}$ fixed in Section~\ref{sec:definitions}.

\begin{lem}\label{lem:kernel}
Let $1\leq\lambda\leq2$, $A>0$, and $N>2n/\lambda+2A$.
If $b\in B^N_{\lambda,\infty}(\mathbb R^{2n})$, then there exists a
constant $C>0$, independent of $b$, such that
\begin{equation}\label{eq:kernel-moment}
\int_{\mathbb R^{2n}}\left|\mathcal F^{-1}_{\xi,\eta}b(y,z)\right|
(1+\left|y\right|)^A(1+\left|z\right|)^A\,\mathrm d y\,\mathrm d z
\leq C\left\|b\right\|_{B^N_{\lambda,\infty}(\mathbb R^{2n})}.
\end{equation}
\end{lem}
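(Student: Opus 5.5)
We prove Lemma~\ref{lem:kernel} by a dyadic decomposition of $b$ in the joint frequency variables, estimating each piece of the inverse Fourier transform in a weighted $L^1$ norm. First note that $(1+|y|)^A(1+|z|)^A\le(1+|(y,z)|)^{2A}$. Write $w=(y,z)\in\mathbb R^{2n}$ and $b_j=\varphi^{(2n)}_j*_{(\xi,\eta)}b$, so that $b=\sum_j b_j$ with convergence in $L^\infty$ (as shown after Definition~\ref{def:symbol}, using $N>2n/\lambda$). Then $\mathcal F^{-1}b=\sum_j\mathcal F^{-1}b_j$, and
\[
\mathcal F^{-1}b_j(w)=\mathcal F^{-1}\varphi^{(2n)}_j(w)\,\mathcal F^{-1}b(w)
=\widehat{\varphi^{(2n)}_j}(\pm w)\,\mathcal F^{-1}b(w)
\]
is supported, for $j\ge1$, in the annulus $\{2^{j-1}\le|w|\le2^{j+1}\}$, and for $j=0$ in $\{|w|\le2\}$. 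It therefore suffices to show that
\[
\int_{\mathbb R^{2n}}|\mathcal F^{-1}b_j(w)|(1+|w|)^{2A}\,\mathrm dw\lesssim 2^{-j(N-2n/\lambda-2A)}\|b\|_{B^N_{\lambda,\infty}},
\]
and then sum the geometric series, which converges because $N>2n/\lambda+2A$.

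To estimate each piece we use H\"older's inequality on the annulus of measure $\asymp2^{2nj}$. With $\lambda'$ the dual exponent, $2\le\lambda'\le\infty$,
\[
\int|\mathcal F^{-1}b_j|(1+|w|)^{2A}\lesssim 2^{2jA}\,2^{2nj(1-1/\lambda')}\,\|\mathcal F^{-1}b_j\|_{L^{\lambda'}}
=2^{2jA}2^{2nj/\lambda}\|\mathcal F^{-1}b_j\|_{L^{\lambda'}}.
\]
By Hausdorff--Young, valid since $1\le\lambda\le2$, we get $\|\mathcal F^{-1}b_j\|_{L^{\lambda'}}\le\|b_j\|_{L^\lambda}\le2^{-jN}\|b\|_{B^N_{\lambda,\infty}}$. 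Combining these gives the required bound $2^{-j(N-2n/\lambda-2A)}\|b\|_{B^N_{\lambda,\infty}}$.

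The only delicate point is justifying the manipulations for a general $b\in B^N_{\lambda,\infty}(\mathbb R^{2n})$. We must check that $\mathcal F^{-1}b$ is a genuine function equal almost everywhere to $\sum_j\mathcal F^{-1}b_j$. Each $b_j\in L^\lambda$, so $\mathcal F^{-1}b_j\in L^{\lambda'}$. The estimate above shows that the series converges absolutely in weighted $L^1$, hence in $L^1$ and therefore in $\mathcal S'$. Since $b=\sum b_j$ in $\mathcal S'$ and $\mathcal F^{-1}$ is continuous on $\mathcal S'$, the limit agrees with $\mathcal F^{-1}b$. The left-hand side of \eqref{eq:kernel-moment} is then bounded by the sum of the piecewise estimates, by the triangle inequality.
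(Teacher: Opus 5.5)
Your proposal is correct and follows the paper's proof essentially step by step: you localize $\mathcal F^{-1}(\varphi^{(2n)}_j*b)$ to a ball of radius $\asymp 2^j$, apply H\"older and Hausdorff--Young to get the bound $2^{j(2A+2n/\lambda)}\|\varphi^{(2n)}_j*b\|_{L^\lambda}\le 2^{-j(N-2A-2n/\lambda)}\|b\|_{B^N_{\lambda,\infty}}$, and sum the geometric series using $N>2n/\lambda+2A$. Your argument that $\mathcal F^{-1}b$ is the weighted-$L^1$ limit of the pieces is a slightly more explicit version of the paper's closing remark that $b=\sum_j\varphi^{(2n)}_j*b$ in $\mathcal S'$.
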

\begin{proof}
For every $j\in\mathbb N_0$, we have in $\mathcal S'(\mathbb R^{2n})$
\[
\mathcal F^{-1}_{\xi,\eta}(\varphi_j^{(2n)}*b)(y,z)
=\widehat{\varphi_j^{(2n)}}(-y,-z)
\mathcal F^{-1}_{\xi,\eta}b(y,z).
\]
The function on the left is supported in
$\left\{(y,z)\in\mathbb R^{2n}:\left|(y,z)\right|\leq C2^j\right\}$.
Let $\lambda'\in[2,\infty]$ satisfy
$1/\lambda+1/\lambda'=1$, with $\lambda'=\infty$ when $\lambda=1$.
H\"older's and Hausdorff--Young inequalities give
\[
\begin{aligned}
&\int_{\mathbb R^{2n}}
\left|\mathcal F^{-1}_{\xi,\eta}(\varphi_j^{(2n)}*b)(y,z)\right|
(1+\left|y\right|)^A(1+\left|z\right|)^A\,\mathrm d y\,\mathrm d z
\leq C2^{j(2A+2n/\lambda)}
\left\|\mathcal F^{-1}_{\xi,\eta}(\varphi_j^{(2n)}*b)\right\|_{L^{\lambda'}(\mathbb R^{2n})}\\
&\quad \leq C2^{j(2A+2n/\lambda)}
\left\|\varphi_j^{(2n)}*b\right\|_{L^\lambda(\mathbb R^{2n})}
\leq C2^{-j(N-2A-2n/\lambda)}
\left\|b\right\|_{B^N_{\lambda,\infty}(\mathbb R^{2n})}.
\end{aligned}
\]
Since $N>2A+2n/\lambda$, summing over $j\in\mathbb N_0$ and using
$b=\sum_{j=0}^{\infty}\varphi_j^{(2n)}*b$ in
$\mathcal S'(\mathbb R^{2n})$ proves \eqref{eq:kernel-moment}.
\end{proof}

For $f\in L^1_{\rm loc}(\mathbb R^n)$, the Hardy--Littlewood maximal
operator is
\[
\mathcal Mf(x)=\sup_{t>0}\frac1{\left|B(x,t)\right|}
\int_{B(x,t)}\left|f(y)\right|\,\mathrm d y,\qquad x\in\mathbb R^n.
\]
Here $B(x,t)$ is the Euclidean ball of radius $t>0$ centered at $x$,
and $\left|B(x,t)\right|$ denotes its Lebesgue measure.
For $p\in\mathcal P_0^{\log}$ with $p^->1$, the scalar maximal
inequality in \cite[Theorem~1.2]{DHHMS} gives
\begin{equation}\label{eq:scalar-maximal}
\left\|\mathcal Mg\right\|_{L^{p(\cdot)}}\leq C_p\left\|g\right\|_{L^{p(\cdot)}},
\qquad g\in L^{p(\cdot)}.
\end{equation}
The constant $C_p>0$ depends only on $p$ and $n$.

\begin{lem}\label{lem:lowpass}
Let $(\varphi_j)_{j\in\mathbb N_0}$ be the spatial dyadic kernels
associated with the cutoff $\psi$ in Section~\ref{subsec:variable-spaces}.
If $A>n$ and $g\in L^1_{\rm loc}(\mathbb R^n)$, then at every
$x\in\mathbb R^n$ for which $\mathcal Mg(x)<\infty$, and for every
$k\in\mathbb N_0$ and
$z\in\mathbb R^n$,
\begin{equation}\label{eq:lowpass-shift}
\left|\left(\sum_{j=0}^k\varphi_j*g\right)(x-2^{-k}z)\right|\lesssim(1+\left|z\right|)^A\mathcal Mg(x).
\end{equation}
\end{lem}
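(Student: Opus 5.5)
The approach is to write the low-pass sum as a single convolution with a dilated Schwartz kernel and then apply the standard majorization of convolutions with radially decreasing integrable kernels by the Hardy--Littlewood maximal function. By the identity recorded in Section~\ref{subsec:variable-spaces},
\[
\sum_{j=0}^k\varphi_j*g=\Phi_k*g,\qquad \Phi_k(y)=2^{kn}\Phi(2^ky),\qquad \Phi=\mathcal F^{-1}\psi\in\mathcal S(\mathbb R^n).
\]
For $g\in L^1_{\rm loc}$ with $\mathcal Mg(x)<\infty$ the right-hand side is an absolutely convergent integral. This follows from the bound below, which also justifies the identity pointwise: the identity holds in $\mathcal S'$ and both sides are continuous where the integral converges locally uniformly.

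Fix $x$, $k$, $z$ and write $w=x-2^{-k}z$. Then
\[
\left|\Phi_k*g(w)\right|\le\int_{\mathbb R^n}2^{kn}\left|\Phi(2^k(w-y))\right|\left|g(y)\right|\,\mathrm dy.
\]
Since $\Phi$ is Schwartz, $|\Phi(u)|\lesssim(1+|u|)^{-A'}$ for any $A'$. Peetre's inequality gives
\[
(1+|2^k(w-y)|)^{-A}\le(1+|z|)^{A}(1+|2^k(x-y)|)^{-A},
\]
because $2^k(x-y)=2^k(w-y)+z$. Taking $A'=A$, we obtain
\[
\left|\Phi_k*g(w)\right|\lesssim(1+|z|)^A\int_{\mathbb R^n}2^{kn}(1+2^k|x-y|)^{-A}|g(y)|\,\mathrm dy=(1+|z|)^A\,(\eta_{k,A}*|g|)(x).
\]

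The remaining step, which is the only place the hypothesis $A>n$ enters, is the classical estimate $\eta_{k,A}*|g|(x)\le C\,\mathcal Mg(x)$ with $C$ independent of $k$. The function $\eta_{k,A}$ is radial, decreasing, and has $L^1$ norm equal to $\|\eta_{0,A}\|_{L^1}<\infty$ precisely because $A>n$. To prove the estimate I would split $\mathbb R^n$ into the ball $|x-y|\le2^{-k}$ and the annuli $2^{i-1-k}<|x-y|\le2^{i-k}$, $i\ge1$. On the $i$-th annulus the kernel is at most $2^{kn}2^{-(i-1)A}$, and the integral of $|g|$ over $B(x,2^{i-k})$ is at most $|B(0,1)|2^{(i-k)n}\mathcal Mg(x)$. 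Summing gives
\[
\eta_{k,A}*|g|(x)\lesssim\sum_{i\ge0}2^{i(n-A)}\mathcal Mg(x)\lesssim\mathcal Mg(x).
\]
Combining this with the previous display yields \eqref{eq:lowpass-shift}. There is no real obstacle; the only care needed is to use the Schwartz decay of $\Phi$ with exponent $A$ and to keep the constant uniform in $k$ and $z$.
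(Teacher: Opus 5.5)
Your argument is correct, but it absorbs the shift $2^{-k}z$ differently from the paper. You move the center of the kernel from $w=x-2^{-k}z$ to $x$ by Peetre's inequality, paying $(1+|z|)^A$. You then use the standard majorization $\eta_{k,A}*|g|(x)\lesssim\mathcal Mg(x)$, which is exactly where $A>n$ is needed.

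The paper instead dominates $|2^{kn}\check\psi(2^k\cdot)|$ by $C\sum_{j\ge0}2^{-jL}2^{kn}\mathbf 1_{\{|y|\le2^{j-k}\}}$ with a fixed $L>n$. The shifted convolution is then controlled by integrals of $|g|$ over the balls $B(x-2^{-k}z,2^{j-k})$. Each such ball is enlarged to $B(x,2^{-k}(2^j+|z|))$, which costs only a volume factor and yields $\sum_{j}2^{-jL}(2^j+|z|)^n\mathcal Mg(x)\lesssim(1+|z|)^n\mathcal Mg(x)$.

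So the paper's route gives the sharper loss $(1+|z|)^n$, independent of $A$, and uses $A>n$ only to weaken it to $(1+|z|)^A$ at the end. In your route the loss exponent coincides with the decay exponent of the kernel majorant, and therefore must exceed $n$. Your version is modular, reusing the majorant $\eta_{k,A}$ that the paper introduces before Lemma~\ref{lem:standard}. Either bound suffices for the application, where only some $A>n+c_s+c_q$ is used.

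Unlike the paper, you also address why the pointwise integral represents $\sum_{j\le k}\varphi_j*g$. To make that remark airtight, note that your estimate $\eta_{0,A}*|g|(x)\lesssim\mathcal Mg(x)<\infty$, combined with Peetre's inequality, gives $\int_{\mathbb R^n}|g(y)|(1+|y|)^{-A}\,\mathrm dy<\infty$. Hence $g$ is a tempered distribution, and the $\mathcal S'$ identity you invoke applies.
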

\begin{proof}
The convolution kernel of $\sum_{j=0}^k\varphi_j*g$ is
$2^{kn}\check\psi(2^k\cdot)$, where $\check\psi=\mathcal F^{-1}\psi$.
Its absolute value
is bounded by
\[
C\sum_{j=0}^{\infty}2^{-jL}2^{kn}
\mathbf1_{\left\{\left|y\right|\leq2^{j-k}\right\}},
\]
where $L>n$ is a fixed real number, $y\in\mathbb R^n$, and
$j\in\mathbb N_0$. A ball centered at $x-2^{-k}z$ of radius $2^{j-k}$
is contained in the ball centered at $x$ of radius
$2^{-k}(2^j+\left|z\right|)$. It follows that
\[
\left|\left(\sum_{j=0}^k\varphi_j*g\right)(x-2^{-k}z)\right|
\lesssim\sum_{j=0}^{\infty}2^{-jL}(2^j+\left|z\right|)^n\mathcal Mg(x)
\lesssim(1+\left|z\right|)^n\mathcal Mg(x).
\]
This implies \eqref{eq:lowpass-shift}.
\end{proof}

\Needspace{4\baselineskip}
The following lemma gives estimates for localized symbols.

\begin{lem}\label{lem:localized}
Let $m\in\mathbb R$, $0\leq\delta<1$, $1\leq\mu\leq\infty$,
$0<v\leq\infty$, $1\leq\lambda\leq2$, $r>n/\mu$, and $N>2n/\lambda$.
Let $a\in BS\mathcal B^m_\delta(r,\mu,v;N,\lambda)$.
Let $k,\ell\in\mathbb N_0$.
Let $\psi$ be the cutoff used to define the dyadic resolution in Section~\ref{sec:definitions}.
Let $(\varphi_j)_{j\in\mathbb N_0}$ be the corresponding spatial dyadic kernels.
Choose
$\theta\in C_c^\infty(\mathbb R^n)$ equal to one on
$\{1/2\leq\left|\xi\right|\leq2\}$ and supported in
$\{1/4\leq\left|\xi\right|\leq8\}$. Put $\theta_k(\xi)=\theta(2^{-k}\xi)$
for $k\geq1$, and $\theta_0(\xi)=\psi(\xi/2)$. Define
\[
c_k(x,\xi,\eta)=a(x,\xi,\eta)\theta_k(\xi)\psi(2^{-k-1}\eta),
\quad c_{\ell,k}=\varphi_\ell*_x c_k,\quad
k,\ell\in\mathbb N_0.
\]
For all $f,g\in\mathcal S(\mathbb R^n)$,
\[
T_{c_k}\left(\varphi_k*f,\sum_{j=0}^k\varphi_j*g\right)
=T_a\left(\varphi_k*f,\sum_{j=0}^k\varphi_j*g\right).
\]
Moreover,
\begin{equation}\label{eq:localized-low}
\mathop{\rm ess\,sup}_{x\in\mathbb R^n}
\left\|\sum_{\nu=0}^k c_{\nu,k}(x,2^k\cdot,2^k\cdot)\right\|_{B^N_{\lambda,\infty}}
\lesssim\left\|a\right\|_{BS\mathcal B^m_\delta(r,\mu,v;N,\lambda)}2^{km}.
\end{equation}
One also has
\begin{equation}\label{eq:localized-high}
\mathop{\rm ess\,sup}_{x\in\mathbb R^n}
\left\|c_{\ell,k}(x,2^k\cdot,2^k\cdot)\right\|_{B^N_{\lambda,\infty}}
\lesssim\left\|a\right\|_{BS\mathcal B^m_\delta(r,\mu,v;N,\lambda)}2^{-\ell(r-n/\mu)}2^{k(m+\delta r)}.
\end{equation}
The implicit constants in these two estimates depend only on the
parameters and the fixed cutoffs, and not on $a$, $k$, or $\ell$.
\end{lem}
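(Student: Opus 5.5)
The identity reduces to a Fourier-support check, and both estimates follow by rewriting the normalized $c_k$ through finitely many profiles $b_h$ and then estimating those.

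\emph{Step 1: the identity.} I would first verify that $c_k$ and $a$ agree on the joint frequency support of the two inputs. The product $\widehat{\varphi_k*f}(\xi)\,\widehat{\sum_{j\le k}\varphi_j*g}(\eta)$ is supported where $2^{k-1}\le|\xi|\le2^{k+1}$ (or $|\xi|\le2$ if $k=0$) and $|\eta|\le2^{k+1}$. On this set $\theta_k(\xi)=1$: for $k\ge1$ because $\theta=1$ on $\{1/2\le|\xi|\le2\}$, and for $k=0$ because $\psi(\xi/2)=1$ on $|\xi|\le2$. Also $\psi(2^{-k-1}\eta)=1$ there. Hence $c_k=a$ on the support of the integrand in \eqref{eq:operator}, which gives the identity.

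\emph{Step 2: reduction to the profiles $b_h$.} The normalized function is
\[
c_k(x,2^k\xi,2^k\eta)=a(x,2^k\xi,2^k\eta)\,\theta_k(2^k\xi)\,\psi(\eta/2).
\]
In $(\xi,\eta)$ its support lies in a fixed annulus for $k\ge1$ (roughly $1/4\le|(\xi,\eta)|\le 12$) and in a fixed ball for $k=0$. As in the proof of Proposition~\ref{prop:symbol-independent}(iii), the partition $\sum_h\chi_h=1$ gives
\[
c_k(x,2^k\xi,2^k\eta)=\theta_k(2^k\xi)\,\psi(\eta/2)\sum_{|h-k|\le d}b_h(x,2^{k-h}\xi,2^{k-h}\eta)
\]
for a fixed $d$. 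The multiplier $\theta_k(2^k\xi)\psi(\eta/2)$ belongs to $C_b^\infty(\mathbb R^{2n})$ with derivative bounds uniform in $k$. The dilations $2^{k-h}$ range over a finite set, so they are bounded on $E=B^N_{\lambda,\infty}(\mathbb R^{2n})$. Lemma~\ref{lem:banach-besov}(ii) in dimension $2n$ then gives, for a.e.\ $x$ and every operator $P$ acting only in $x$ (convolution with an integrable kernel),
\[
\bigl\|(P c_k)(x,2^k\cdot,2^k\cdot)\bigr\|_E\lesssim\sum_{|h-k|\le d}\bigl\|(P b_h)(x,\cdot,\cdot)\bigr\|_E .
\]
This holds because frequency multiplication and dilation commute with spatial convolution.

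\emph{Step 3: the low-frequency bound \eqref{eq:localized-low}.} Here $\sum_{\nu\le k}\varphi_\nu*_x$ is convolution with $2^{kn}\check\psi(2^k\cdot)$, whose $L^1$ norm is uniformly bounded. Minkowski's inequality for the $E$-valued integral yields the bound $\sum_{|h-k|\le d}\operatorname{ess\,sup}_x\|b_h(x)\|_E\lesssim2^{km}\|a\|$, using \eqref{eq:symbol-uniform} and $2^{hm}\asymp2^{km}$.

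\emph{Step 4: the high-frequency bound \eqref{eq:localized-high}.} This is the step needing care, since we must pass from an $L^\mu$ bound to a pointwise bound. Let $\widetilde\varphi_\ell=\varphi_{\ell-1}+\varphi_\ell+\varphi_{\ell+1}$, so that $\varphi_\ell*_x b_h=\widetilde\varphi_\ell*_x(\varphi_\ell*_x b_h)$. Young's inequality for the $E$-valued convolution (Hölder in $y$ applied to $\|\cdot\|_E$) gives
\[
\operatorname{ess\,sup}_x\|(\varphi_\ell*_xb_h)(x)\|_E\le\|\widetilde\varphi_\ell\|_{L^{\mu'}}\,\|\varphi_\ell*_xb_h\|_{L^\mu(E)}\lesssim2^{\ell n/\mu}\,\|\varphi_\ell*_xb_h\|_{L^\mu(E)}.
\]
Since the $\ell^v$ quasi-norm dominates each term, $2^{\ell r}\|\varphi_\ell*_xb_h\|_{L^\mu(E)}\le\|b_h\|_{B^r_{\mu,v}(E)}\lesssim2^{h(m+\delta r)}\|a\|$ by \eqref{eq:symbol-spatial}. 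Combining these with Step 2 and $2^{h(m+\delta r)}\asymp2^{k(m+\delta r)}$ gives $2^{-\ell(r-n/\mu)}2^{k(m+\delta r)}\|a\|$.

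The main technical point is the uniformity in $k$ of the Step 2 representation, including $k=0$ and the low cutoffs. This is handled exactly as in Proposition~\ref{prop:symbol-independent}.
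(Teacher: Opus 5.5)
Your proposal is correct and follows essentially the same route as the paper. Both arguments use the Fourier-support check for the identity, rewrite the normalized $c_k$ through the finitely many profiles $b_h$ with $|h-k|\le 4$ via smooth multiplication and dilation on $B^N_{\lambda,\infty}$, apply Minkowski with the uniformly $L^1$-bounded low-pass kernel, and finish with a Bernstein-type $L^\mu\to L^\infty$ step.

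The only difference is the order and the kernel in that last step, and it is cosmetic. You apply Bernstein to each $\varphi_\ell*_x b_h$ with the reproducing kernel $\varphi_{\ell-1}+\varphi_\ell+\varphi_{\ell+1}$. The paper instead first bounds the $B^r_{\mu,v}(B^N_{\lambda,\infty})$ norm of the normalized $c_k$, then applies Bernstein to $c_{\ell,k}$ using a kernel $\sigma_\ell$ whose Fourier transform equals one on $\{|\zeta|\le 2^{\ell+1}\}$.
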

\begin{proof}
Since $\sum_{j=0}^k\widehat\varphi_j=\psi(2^{-k}\cdot)$, for
$k\in\mathbb N_0$ and $\xi,\eta\in\mathbb R^n$ we have
\[
\theta_k(\xi)\widehat\varphi_k(\xi)=\widehat\varphi_k(\xi),\qquad
\psi(2^{-k-1}\eta)\psi(2^{-k}\eta)=\psi(2^{-k}\eta).
\]
This proves the identity between the two localized operators.
The supports of $\theta_k$ and $\psi$ give
\[
\operatorname{supp}c_k(x,\cdot,\cdot)
\subset\left\{(\xi,\eta)\in\mathbb R^{2n}:2^{k-2}\leq\left|(\xi,\eta)\right|\leq2^{k+2}\sqrt5\right\},
\quad k\geq1,
\]
and $\operatorname{supp}c_0(x,\cdot,\cdot)\subset
\left\{(\xi,\eta)\in\mathbb R^{2n}:\left|(\xi,\eta)\right|\leq4\sqrt2\right\}$.
Consequently,
\[
\chi_h(\xi,\eta)c_k(x,\xi,\eta)=0,
\qquad h,k\in\mathbb N_0,\quad \left|h-k\right|>4.
\]
Using $b_h$ from \eqref{eq:profile}, we obtain
\[
c_k(x,2^k\xi,2^k\eta)
=\theta_k(2^k\xi)\psi(\eta/2)
\sum_{\substack{h\in\mathbb N_0\\\left|h-k\right|\leq4}}
b_h(x,2^{k-h}\xi,2^{k-h}\eta),\qquad k\in\mathbb N_0.
\]
The normalized cutoffs have uniform derivative bounds, and only the
finitely many dilations $2^{k-h}$ with $\left|h-k\right|\leq4$ occur.
Smooth multiplication and dilation estimates therefore give
\[
\left\|c_k(x,2^k\cdot,2^k\cdot)\right\|_{B^N_{\lambda,\infty}}
\lesssim\sum_{\substack{h\in\mathbb N_0\\\left|h-k\right|\leq4}}
\left\|b_h(x,\cdot,\cdot)\right\|_{B^N_{\lambda,\infty}}.
\]
These operations commute with spatial convolution. Hence
\[
\left\|c_k(\cdot,2^k\cdot,2^k\cdot)\right\|_{B^r_{\mu,v}(B^N_{\lambda,\infty})}
\lesssim\sum_{\substack{h\in\mathbb N_0\\\left|h-k\right|\leq4}}
\left\|b_h\right\|_{B^r_{\mu,v}(B^N_{\lambda,\infty})}
\lesssim2^{k(m+\delta r)}
\left\|a\right\|_{BS\mathcal B^m_\delta(r,\mu,v;N,\lambda)}.
\]
For the spatial low-frequency part, the dyadic resolution gives
\[
\sum_{\nu=0}^k\varphi_\nu(x)
=2^{kn}(\mathcal F^{-1}\psi)(2^kx),\qquad
\left\|\sum_{\nu=0}^k\varphi_\nu\right\|_{L^1}
=\left\|\mathcal F^{-1}\psi\right\|_{L^1}.
\]
Minkowski's inequality and \eqref{eq:symbol-uniform} yield
\[
\begin{aligned}
\mathop{\rm ess\,sup}_{x\in\mathbb R^n}
\left\|\sum_{\nu=0}^k c_{\nu,k}(x,2^k\cdot,2^k\cdot)\right\|_{B^N_{\lambda,\infty}}
&\lesssim\left\|\mathcal F^{-1}\psi\right\|_{L^1}
\sum_{\substack{h\in\mathbb N_0\\\left|h-k\right|\leq4}}
\mathop{\rm ess\,sup}_{x\in\mathbb R^n}
\left\|b_h(x,\cdot,\cdot)\right\|_{B^N_{\lambda,\infty}}\\
&\lesssim2^{km}\left\|a\right\|_{BS\mathcal B^m_\delta(r,\mu,v;N,\lambda)}.
\end{aligned}
\]
This proves \eqref{eq:localized-low}.
For the second bound, note that
\[
\left\|\left\|c_{\ell,k}(x,2^k\cdot,2^k\cdot)\right\|_{B^N_{\lambda,\infty}}
\right\|_{L^\mu_x}
\lesssim\left\|a\right\|_{BS\mathcal B^m_\delta(r,\mu,v;N,\lambda)}2^{-\ell r}2^{k(m+\delta r)}.
\]
To verify the Bernstein estimate, choose $\sigma\in\mathcal S(\mathbb R^n)$
with $\widehat\sigma=1$ on $\left\{\zeta\in\mathbb R^n:\left|\zeta\right|\leq2\right\}$, and put
$\sigma_\ell(x)=2^{\ell n}\sigma(2^\ell x)$.
The spatial Fourier support of $c_{\ell,k}$ gives
$c_{\ell,k}=\sigma_\ell*_x c_{\ell,k}$.
H\"older's inequality yields
\[
\begin{aligned}
\left\|c_{\ell,k}(x,2^k\cdot,2^k\cdot)\right\|_{B^N_{\lambda,\infty}}
&\leq\int_{\mathbb R^n}\left|\sigma_\ell(x-y)\right|
\left\|c_{\ell,k}(y,2^k\cdot,2^k\cdot)\right\|_{B^N_{\lambda,\infty}}\,\mathrm d y\\
&\leq\left\|\sigma_\ell\right\|_{L^{\mu'}}
\left\|\left\|c_{\ell,k}(\cdot,2^k\cdot,2^k\cdot)\right\|_{B^N_{\lambda,\infty}}
\right\|_{L^\mu}.
\end{aligned}
\]
Since $\left\|\sigma_\ell\right\|_{L^{\mu'}}=2^{\ell n/\mu}\left\|\sigma\right\|_{L^{\mu'}}$,
we get
\[
\mathop{\rm ess\,sup}_{x\in\mathbb R^n}\left\|c_{\ell,k}(x,2^k\cdot,2^k\cdot)\right\|_{B^N_{\lambda,\infty}}
\lesssim 2^{\ell n/\mu}
\left\|\left\|c_{\ell,k}(x,2^k\cdot,2^k\cdot)\right\|_{B^N_{\lambda,\infty}}
\right\|_{L^\mu_x}.
\]
This proves \eqref{eq:localized-high} and the proof is complete.
\end{proof}

\subsection{Proofs}\label{subsec:proofs}

\begin{proof}[Proof of Theorem~\ref{thm:triebel}]
We first suppose that $a\in BS\mathcal B^m_\delta(r,\mu,v;N,\lambda)$,
with the parameters as in the theorem. Let $f,g\in\mathcal S(\mathbb R^n)$.
By \eqref{eq:frequency-threshold}, choose $A$ such that
\begin{equation}\label{eq:peetre-choice}
n+c_s+c_q<A<\frac12\left(N-\frac{2n}{\lambda}\right).
\end{equation}
Since $p_1^-,p_2^-,q^->1$, we also have
\[
A>\max_{i\in\{1,2\}}\left\{
\frac n{\min\left\{p_i^-,q^-\right\}}+c_s,\frac n{p_i^-}+c_s+c_q\right\}.
\]
The dyadic cutoffs and \eqref{eq:operator} satisfy, for almost every $x$,
\[
\sum_{j,k=0}^{\infty}\left|\widehat\varphi_k(\xi)\widehat\varphi_j(\eta)\right|=1,
\qquad
\int_{\mathbb R^{2n}}\left|a(x,\xi,\eta)\widehat f(\xi)\widehat g(\eta)\right|\,\mathrm d\xi\,\mathrm d\eta<\infty.
\]
Splitting the input indices into $j\leq k$ and $k<j$ therefore gives
\begin{equation}\label{eq:two-input-split}
T_a(f,g)=\sum_{k=0}^{\infty}T_a\left(\varphi_k*f,\sum_{j=0}^k\varphi_j*g\right)
+\sum_{j=0}^{\infty}T_a\left(\sum_{i=0}^{j-1}\varphi_i*f,\varphi_j*g\right).
\end{equation}
The symbol class and its quasi-norm are invariant under
$\xi\leftrightarrow\eta$. Hence it suffices to estimate the first sum
in \eqref{eq:two-input-split}; the second follows by interchanging
$(f,p_1)$ and $(g,p_2)$ and applying Lemma~\ref{lem:lowpass} at
scale $j-1$, $j\geq1$.

Using $c_k,c_{\ell,k}$ from Lemma~\ref{lem:localized}, define
\[
h_k=T_{\sum_{\nu=0}^k c_{\nu,k}}\left(\varphi_k*f,\sum_{j=0}^k\varphi_j*g\right),\qquad
H_\ell=\sum_{k=0}^{\ell-1}T_{c_{\ell,k}}\left(\varphi_k*f,\sum_{j=0}^k\varphi_j*g\right),
\]
where $k\in\mathbb N_0$, $\ell\in\mathbb N$, and $H_0=0$.
The spatial decomposition gives
\begin{equation}\label{eq:space-split}
\sum_{k=0}^{\infty}T_a\left(\varphi_k*f,\sum_{j=0}^k\varphi_j*g\right)
=\sum_{k=0}^{\infty}h_k+\sum_{\ell=1}^{\infty}H_\ell.
\end{equation}
We first estimate finite partial sums; convergence and the identity
\eqref{eq:space-split} are justified below.

For $k\in\mathbb N_0$, let $d_k=\sum_{\nu=0}^k c_{\nu,k}$ or
$d_k=c_{\ell,k}$ with $\ell\in\mathbb N$, $\ell>k$.
Changing variables in the kernel representation gives, for $x\in\mathbb R^n$,
\[
\begin{aligned}
&T_{d_k}\left(\varphi_k*f,\sum_{j=0}^k\varphi_j*g\right)(x)\\
&\quad=\int_{\mathbb R^{2n}}\mathcal F^{-1}_{\xi,\eta}[d_k(x,2^k\xi,2^k\eta)](y,z)
(\varphi_k*f)(x-2^{-k}y)\left(\sum_{j=0}^k\varphi_j*g\right)(x-2^{-k}z)\,\mathrm d y\,\mathrm d z.
\end{aligned}
\]
The definition of $f_{k,A}^*$ and Lemma~\ref{lem:lowpass} yield
\[
\left|(\varphi_k*f)(x-2^{-k}y)\right|\leq(1+\left|y\right|)^Af_{k,A}^*(x),\qquad
\left|\left(\sum_{j=0}^k\varphi_j*g\right)(x-2^{-k}z)\right|
\lesssim(1+\left|z\right|)^A\mathcal Mg(x).
\]
By \eqref{eq:peetre-choice}, $N>2n/\lambda+2A$, so Lemma~\ref{lem:kernel} gives
\begin{equation}\label{eq:pointwise-localized}
\begin{aligned}
&\left|T_{d_k}\left(\varphi_k*f,\sum_{j=0}^k\varphi_j*g\right)(x)\right|\\
&\quad\lesssim f_{k,A}^*(x)\mathcal Mg(x)
\int_{\mathbb R^{2n}}\left|\mathcal F^{-1}_{\xi,\eta}[d_k(x,2^k\xi,2^k\eta)](y,z)\right|(1+\left|y\right|)^A(1+\left|z\right|)^A\,\mathrm d y\,\mathrm d z\\
&\quad\lesssim\left\|d_k(x,2^k\cdot,2^k\cdot)\right\|_{B^N_{\lambda,\infty}}
f_{k,A}^*(x)\mathcal Mg(x).
\end{aligned}
\end{equation}

Let $\zeta\in\mathbb R^n$ be the Fourier variable dual to $x$.
The spatial cutoffs give, in $\mathcal S'$, 
\[
\begin{aligned}
\mathcal F_x\left(\sum_{\nu=0}^k c_{\nu,k}\right)(\zeta,\xi,\eta)
&=\psi(2^{-k}\zeta)(\mathcal F_xc_k)(\zeta,\xi,\eta),\\
(\mathcal F_xc_{\ell,k})(\zeta,\xi,\eta)
&=\widehat\varphi_\ell(\zeta)(\mathcal F_xc_k)(\zeta,\xi,\eta).
\end{aligned}
\]
Moreover,
\[
\mathcal F_x[e^{2\pi ix\cdot(\xi+\eta)}d_k](\omega,\xi,\eta)
=(\mathcal F_xd_k)(\omega-\xi-\eta,\xi,\eta),\qquad\omega\in\mathbb R^n.
\]
On the input supports, $\left|\xi\right|,\left|\eta\right|\leq2^{k+1}$.
Hence $\omega=\zeta+\xi+\eta$ satisfies
\[
\begin{aligned}
\left|\omega\right|&\leq2^{k+1}+2^{k+2}=6\cdot2^k
&&\text{for }h_k,\\
\left|\omega\right|&\leq2^{\ell+1}+2^{k+2}\leq4\cdot2^\ell
&&\text{for each summand of }H_\ell\ (k<\ell).
\end{aligned}
\]
Thus, with $C_0=6$,
\begin{equation}\label{eq:output-balls}
\operatorname{supp}\widehat h_k\subset\left\{\omega\in\mathbb R^n:\left|\omega\right|\leq C_0 2^k\right\},\qquad
\operatorname{supp}\widehat H_\ell\subset\left\{\omega\in\mathbb R^n:\left|\omega\right|\leq C_0 2^\ell\right\}.
\end{equation}

By \eqref{eq:localized-low} and \eqref{eq:pointwise-localized},
\[
2^{ks(x)}\left|h_k(x)\right|
\lesssim\left\|a\right\|_{BS\mathcal B^m_\delta(r,\mu,v;N,\lambda)}
2^{k(s(x)+m)}f_{k,A}^*(x)\mathcal Mg(x).
\]
Using \eqref{eq:output-balls}, Lemmas~\ref{lem:balls},
\ref{lem:holder}, \ref{lem:standard}(ii), and
\eqref{eq:scalar-maximal}, we obtain
\[
\begin{aligned}
&\left\|\sum_{k=0}^{\infty}h_k\right\|_{F^{s(\cdot)}_{p(\cdot),q(\cdot)}}
\lesssim\left\|(2^{ks(\cdot)}h_k)_{k\in\mathbb N_0}\right\|_{L^{p(\cdot)}(\ell^{q(\cdot)})}\\
&\quad \lesssim\left\|a\right\|_{BS\mathcal B^m_\delta(r,\mu,v;N,\lambda)}
\left\|(2^{k(s(\cdot)+m)}f_{k,A}^*)_{k\in\mathbb N_0}\right\|_{L^{p_1(\cdot)}(\ell^{q(\cdot)})}
\left\|\mathcal Mg\right\|_{L^{p_2(\cdot)}}
\lesssim\left\|a\right\|_{BS\mathcal B^m_\delta(r,\mu,v;N,\lambda)}
\left\|f\right\|_{F^{s(\cdot)+m}_{p_1(\cdot),q(\cdot)}}\left\|g\right\|_{L^{p_2(\cdot)}}.
\end{aligned}
\]

By \eqref{eq:smoothness-range},
\[
r-\frac n\mu>s^+,\qquad (1-\delta)r-\frac n\mu\geq0.
\]
For $0\leq k<\ell$ and $x\in\mathbb R^n$,
\[
\begin{aligned}
\ell s(x)-\ell(r-n/\mu)+k(m+\delta r)
&=-(\ell-k)(r-n/\mu-s(x))-k((1-\delta)r-n/\mu)+k(s(x)+m)\\
&\leq-(\ell-k)(r-n/\mu-s^+)+k(s(x)+m).
\end{aligned}
\]
Consequently, \eqref{eq:localized-high} and
\eqref{eq:pointwise-localized} imply
\[
2^{\ell s(x)}\left|H_\ell(x)\right|
\lesssim\left\|a\right\|_{BS\mathcal B^m_\delta(r,\mu,v;N,\lambda)}\mathcal Mg(x)
\sum_{k=0}^{\ell-1}2^{-(\ell-k)(r-n/\mu-s^+)}2^{k(s(x)+m)}f_{k,A}^*(x).
\]
Lemma~\ref{lem:discrete}, with $\epsilon=r-n/\mu-s^+>0$, gives
\[
\left\|\left(\sum_{k=0}^{\ell-1}2^{-(\ell-k)(r-n/\mu-s^+)}
2^{k(s(\cdot)+m)}f_{k,A}^*\right)_{\ell\in\mathbb N_0}\right\|_{L^{p_1(\cdot)}(\ell^{q(\cdot)})}
\lesssim\left\|(2^{k(s(\cdot)+m)}f_{k,A}^*)_{k\in\mathbb N_0}\right\|_{L^{p_1(\cdot)}(\ell^{q(\cdot)})}.
\]
Together with \eqref{eq:output-balls}, Lemmas~\ref{lem:balls},
\ref{lem:holder}, \ref{lem:standard}(ii), and
\eqref{eq:scalar-maximal}, this yields
\[
\begin{aligned}
&\left\|\sum_{\ell=1}^{\infty}H_\ell\right\|_{F^{s(\cdot)}_{p(\cdot),q(\cdot)}}
\lesssim\left\|(2^{\ell s(\cdot)}H_\ell)_{\ell\in\mathbb N_0}\right\|_{L^{p(\cdot)}(\ell^{q(\cdot)})}\\
&\quad \lesssim\left\|a\right\|_{BS\mathcal B^m_\delta(r,\mu,v;N,\lambda)}
\left\|(2^{k(s(\cdot)+m)}f_{k,A}^*)_{k\in\mathbb N_0}\right\|_{L^{p_1(\cdot)}(\ell^{q(\cdot)})}
\left\|\mathcal Mg\right\|_{L^{p_2(\cdot)}}
\lesssim\left\|a\right\|_{BS\mathcal B^m_\delta(r,\mu,v;N,\lambda)}
\left\|f\right\|_{F^{s(\cdot)+m}_{p_1(\cdot),q(\cdot)}}\left\|g\right\|_{L^{p_2(\cdot)}}.
\end{aligned}
\]
Lemma~\ref{lem:balls} gives convergence of both spatial series in
$L^{p(\cdot)}$ and $\mathcal S'$.
For fixed $k\in\mathbb N_0$, \eqref{eq:localized-high} also gives
\[
\sum_{\ell=L+1}^{\infty}\mathop{\rm ess\,sup}_{x\in\mathbb R^n}
\left\|c_{\ell,k}(x,2^k\cdot,2^k\cdot)\right\|_{B^N_{\lambda,\infty}}
\lesssim\left\|a\right\|_{BS\mathcal B^m_\delta(r,\mu,v;N,\lambda)}2^{k(m+\delta r)}2^{-L(r-n/\mu)}
\longrightarrow0\quad(L\to\infty).
\]
Since $c_k=\sum_{\ell=0}^{\infty}c_{\ell,k}$ in $\mathcal S'$,
this estimate and \eqref{eq:pointwise-localized} imply
\[
T_{c_k}\left(\varphi_k*f,\sum_{j=0}^k\varphi_j*g\right)
=\sum_{\ell=0}^{\infty}T_{c_{\ell,k}}\left(\varphi_k*f,\sum_{j=0}^k\varphi_j*g\right)
\quad\text{almost everywhere}.
\]
For every $M>0$ and $k\geq1$, the Schwartz decay gives
\[
\sup_{x\in\mathbb R^n}f_{k,A}^*(x)
\leq\left\|\widehat\varphi_k\widehat f\right\|_{L^1}
\lesssim_M2^{-kM}.
\]
Using \eqref{eq:localized-low}, \eqref{eq:localized-high}, and
$(1-\delta)r-n/\mu\geq0$, we obtain, for almost every $x$,
\[
\sum_{k=0}^{\infty}\left(\left|h_k(x)\right|+
\sum_{\ell=k+1}^{\infty}\left|T_{c_{\ell,k}}\left(\varphi_k*f,\sum_{j=0}^k\varphi_j*g\right)(x)\right|\right)
\lesssim\left\|a\right\|_{BS\mathcal B^m_\delta(r,\mu,v;N,\lambda)}\mathcal Mg(x)
\sum_{k=0}^{\infty}2^{km}f_{k,A}^*(x)<\infty.
\]
This justifies the regrouping in \eqref{eq:space-split}.

By \eqref{eq:space-split} and symmetry, \eqref{eq:two-input-split}
yields \eqref{eq:triebel-main}.

For the Besov estimate, the second inequalities in
Lemmas~\ref{lem:holder} and \ref{lem:discrete}, together with
Lemma~\ref{lem:standard}(ii) and \eqref{eq:scalar-maximal}, give
\[
\begin{aligned}
&\left\|(2^{ks(\cdot)}h_k)_{k\in\mathbb N_0}\right\|_{\ell^{q(\cdot)}(L^{p(\cdot)})}
+\left\|(2^{ks(\cdot)}H_k)_{k\in\mathbb N_0}\right\|_{\ell^{q(\cdot)}(L^{p(\cdot)})}\\
&\quad \lesssim\left\|a\right\|_{BS\mathcal B^m_\delta(r,\mu,v;N,\lambda)}
\left\|(2^{k(s(\cdot)+m)}f_{k,A}^*)_{k\in\mathbb N_0}\right\|_{\ell^{q(\cdot)}(L^{p_1(\cdot)})}
\left\|\mathcal Mg\right\|_{L^{p_2(\cdot)}}
\lesssim\left\|a\right\|_{BS\mathcal B^m_\delta(r,\mu,v;N,\lambda)}
\left\|f\right\|_{B^{s(\cdot)+m}_{p_1(\cdot),q(\cdot)}}\left\|g\right\|_{L^{p_2(\cdot)}}.
\end{aligned}
\]
Now \eqref{eq:ball-synthesis} and symmetry prove \eqref{eq:besov-main}
for the Besov symbol class.

Finally, let $1\leq\mu<\infty$ and
$a\in BS\mathcal F^m_\delta(r,\mu,v;N,\lambda)$.
Proposition~\ref{prop:symbol-embeddings}\textup{(iii)} gives
\[
\left\|a\right\|_{BS\mathcal B^m_\delta(r,\mu,\infty;N,\lambda)}
\leq\left\|a\right\|_{BS\mathcal B^m_\delta(r,\mu,\max\{\mu,v\};N,\lambda)}
\lesssim\left\|a\right\|_{BS\mathcal F^m_\delta(r,\mu,v;N,\lambda)}.
\]
Applying the estimates just proved for the Besov symbol class with
$v=\infty$, we obtain
\[
\begin{aligned}
\left\|T_a(f,g)\right\|_{F^{s(\cdot)}_{p(\cdot),q(\cdot)}}
&\lesssim\left\|a\right\|_{BS\mathcal B^m_\delta(r,\mu,\infty;N,\lambda)}
\Bigl(\left\|f\right\|_{F^{s(\cdot)+m}_{p_1(\cdot),q(\cdot)}}\left\|g\right\|_{L^{p_2(\cdot)}}
+\left\|f\right\|_{L^{p_1(\cdot)}}\left\|g\right\|_{F^{s(\cdot)+m}_{p_2(\cdot),q(\cdot)}}\Bigr)\\
&\lesssim\left\|a\right\|_{BS\mathcal F^m_\delta(r,\mu,v;N,\lambda)}
\Bigl(\left\|f\right\|_{F^{s(\cdot)+m}_{p_1(\cdot),q(\cdot)}}\left\|g\right\|_{L^{p_2(\cdot)}}
+\left\|f\right\|_{L^{p_1(\cdot)}}\left\|g\right\|_{F^{s(\cdot)+m}_{p_2(\cdot),q(\cdot)}}\Bigr),
\end{aligned}
\]
and
\[
\begin{aligned}
\left\|T_a(f,g)\right\|_{B^{s(\cdot)}_{p(\cdot),q(\cdot)}}
&\lesssim\left\|a\right\|_{BS\mathcal B^m_\delta(r,\mu,\infty;N,\lambda)}
\Bigl(\left\|f\right\|_{B^{s(\cdot)+m}_{p_1(\cdot),q(\cdot)}}\left\|g\right\|_{L^{p_2(\cdot)}}
+\left\|f\right\|_{L^{p_1(\cdot)}}\left\|g\right\|_{B^{s(\cdot)+m}_{p_2(\cdot),q(\cdot)}}\Bigr)\\
&\lesssim\left\|a\right\|_{BS\mathcal F^m_\delta(r,\mu,v;N,\lambda)}
\Bigl(\left\|f\right\|_{B^{s(\cdot)+m}_{p_1(\cdot),q(\cdot)}}\left\|g\right\|_{L^{p_2(\cdot)}}
+\left\|f\right\|_{L^{p_1(\cdot)}}\left\|g\right\|_{B^{s(\cdot)+m}_{p_2(\cdot),q(\cdot)}}\Bigr).
\end{aligned}
\]
This proves both estimates for the Triebel--Lizorkin symbol class.
\end{proof}

\begin{proof}[Proof of Corollary~\ref{cor:extension}]
Since $s^-+m>0$, Lemma~\ref{lem:positive-embedding} gives, for
$i\in\{1,2\}$,
\[
F^{s(\cdot)+m}_{p_i(\cdot),q(\cdot)}\hookrightarrow L^{p_i(\cdot)},\qquad
B^{s(\cdot)+m}_{p_i(\cdot),q(\cdot)}\hookrightarrow L^{p_i(\cdot)}.
\]
Let $f,g$ belong to the two Triebel--Lizorkin input spaces.
By \cite[Corollary~5.2]{ACatoms}, choose $f_j,g_j\in\mathcal S(\mathbb R^n)$,
$j\in\mathbb N$, such that
\[
f_j\longrightarrow f\quad\text{in }F^{s(\cdot)+m}_{p_1(\cdot),q(\cdot)},\qquad
g_j\longrightarrow g\quad\text{in }F^{s(\cdot)+m}_{p_2(\cdot),q(\cdot)}.
\]
For $j,\ell\in\mathbb N$, bilinearity gives
\[
T_a(f_j,g_j)-T_a(f_\ell,g_\ell)
=T_a(f_j-f_\ell,g_j)+T_a(f_\ell,g_j-g_\ell).
\]
Hence \eqref{eq:triebel-main} and the embeddings yield
\[
\begin{aligned}
&\left\|T_a(f_j,g_j)-T_a(f_\ell,g_\ell)\right\|_{F^{s(\cdot)}_{p(\cdot),q(\cdot)}}\\
&\quad\lesssim\left\|a\right\|_{BS\mathcal A^m_\delta(r,\mu,v;N,\lambda)}
\left(\left\|f_j-f_\ell\right\|_{F^{s(\cdot)+m}_{p_1(\cdot),q(\cdot)}}
\left\|g_j\right\|_{F^{s(\cdot)+m}_{p_2(\cdot),q(\cdot)}}
+\left\|f_\ell\right\|_{F^{s(\cdot)+m}_{p_1(\cdot),q(\cdot)}}
\left\|g_j-g_\ell\right\|_{F^{s(\cdot)+m}_{p_2(\cdot),q(\cdot)}}\right)
\xrightarrow[j,\ell\to\infty]{}0.
\end{aligned}
\]
The target space is complete (see \cite{c,DHR}), so we define
\[
T_a(f,g):=\lim_{j\to\infty}T_a(f_j,g_j)
\quad\text{in }F^{s(\cdot)}_{p(\cdot),q(\cdot)}.
\]
The same difference estimate for any two approximating pairs makes
this limit independent of the choices, and density gives uniqueness.
The embeddings also imply $f_j\to f$ in $L^{p_1(\cdot)}$ and
$g_j\to g$ in $L^{p_2(\cdot)}$; passing to the limit preserves
\eqref{eq:triebel-main}.
Replacing the Triebel--Lizorkin norms by the Besov norms in this
argument proves the Besov extension and \eqref{eq:besov-main}.
\end{proof}

\begin{proof}[Proof of Theorem~\ref{thm:algebra}]
By \cite[Corollary~6.7]{c}, the assumed uniform gap gives
\[
B^{s(\cdot)}_{p(\cdot),q(\cdot)}\hookrightarrow L^\infty.
\]
Moreover, \cite[Theorem~6.1(iii) and Corollary~6.7]{c} yield
\[
F^{s(\cdot)}_{p(\cdot),q(\cdot)}
\hookrightarrow B^{s(\cdot)}_{p(\cdot),\max\{p(\cdot),q(\cdot)\}}
\hookrightarrow L^\infty.
\]

Let $f,g\in\mathcal S(\mathbb R^n)$. Taking $a\equiv1$ in the
decomposition \eqref{eq:two-input-split}, we have
\[
fg=\sum_{k=0}^\infty(\varphi_k*f)\left(\sum_{j=0}^k\varphi_j*g\right)
+\sum_{k=1}^\infty\left(\sum_{j=0}^{k-1}\varphi_j*f\right)(\varphi_k*g).
\]
Each summand at index $k$ has Fourier support in
$\left\{\omega\in\mathbb R^n:\left|\omega\right|\leq2^{k+2}\right\}$.
The low-frequency kernels satisfy
\[
\left\|\sum_{j=0}^k\varphi_j\right\|_{L^1}
=\left\|\mathcal F^{-1}\psi\right\|_{L^1},\qquad
\left\|\sum_{j=0}^k\varphi_j*g\right\|_{L^\infty(\mathbb R^n)}
\lesssim\left\|g\right\|_{L^\infty(\mathbb R^n)},\qquad k\in\mathbb N_0.
\]
Since $s^->0$, Lemma~\ref{lem:balls} and the definition of the mixed
norm give
\[
\left\|\sum_{k=0}^\infty(\varphi_k*f)
\left(\sum_{j=0}^k\varphi_j*g\right)\right\|_{F^{s(\cdot)}_{p(\cdot),q(\cdot)}}
\lesssim\left\|g\right\|_{L^\infty(\mathbb R^n)}
\left\|\left(2^{ks(\cdot)}(\varphi_k*f)\right)_{k\in\mathbb N_0}\right\|_{L^{p(\cdot)}(\ell^{q(\cdot)})}
=\left\|g\right\|_{L^\infty(\mathbb R^n)}
\left\|f\right\|_{F^{s(\cdot)}_{p(\cdot),q(\cdot)}}.
\]
Interchanging $f$ and $g$ and using the low-frequency cutoff at $k-1$
gives the estimate for the second sum. Hence
\begin{equation}\label{eq:product-triebel}
\left\|fg\right\|_{F^{s(\cdot)}_{p(\cdot),q(\cdot)}}
\lesssim\left\|f\right\|_{F^{s(\cdot)}_{p(\cdot),q(\cdot)}}\left\|g\right\|_{L^\infty(\mathbb R^n)}
+\left\|f\right\|_{L^\infty(\mathbb R^n)}\left\|g\right\|_{F^{s(\cdot)}_{p(\cdot),q(\cdot)}}.
\end{equation}
Using the Besov estimate in Lemma~\ref{lem:balls} and the mixed norm
$\ell^{q(\cdot)}(L^{p(\cdot)})$ instead, we obtain
\begin{equation}\label{eq:product-besov}
\left\|fg\right\|_{B^{s(\cdot)}_{p(\cdot),q(\cdot)}}
\lesssim\left\|f\right\|_{B^{s(\cdot)}_{p(\cdot),q(\cdot)}}\left\|g\right\|_{L^\infty(\mathbb R^n)}
+\left\|f\right\|_{L^\infty(\mathbb R^n)}\left\|g\right\|_{B^{s(\cdot)}_{p(\cdot),q(\cdot)}}.
\end{equation}
Combining \eqref{eq:product-triebel} and \eqref{eq:product-besov}
with the $L^\infty$ embeddings proves \eqref{eq:algebra-triebel}
and \eqref{eq:algebra-besov} for Schwartz functions.

By \cite[Corollary~5.2]{ACatoms}, $\mathcal S(\mathbb R^n)$ is dense
in both spaces. The estimates therefore extend multiplication to a
continuous bilinear map on each space. To identify the extension, let
$f_\nu,g_\nu\in\mathcal S(\mathbb R^n)$, $\nu\in\mathbb N$, converge
to $f,g$ in the respective space. The $L^\infty$ embeddings give
\[
\left\|f_\nu g_\nu-fg\right\|_{L^\infty(\mathbb R^n)}
\leq\left\|f_\nu-f\right\|_{L^\infty(\mathbb R^n)}\left\|g_\nu\right\|_{L^\infty(\mathbb R^n)}
+\left\|f\right\|_{L^\infty(\mathbb R^n)}\left\|g_\nu-g\right\|_{L^\infty(\mathbb R^n)}
\longrightarrow0.
\]
The extension also converges in $L^\infty$, so it agrees with the
pointwise product. This proves
\eqref{eq:algebra-triebel} and \eqref{eq:algebra-besov}.
\end{proof}

\section*{Acknowledgements}
Under the authors' guidance, AI tools were used to support discussions
and revisions during manuscript preparation. The authors checked the
final arguments and proofs and take full responsibility for the manuscript.

\noindent The work is supported by the Science and Technology Project of Guangxi
(Guike AD25069086) and the Natural Science Foundation of Guangxi
(No.~2026GXNSFBA00640069).

\section*{Declarations}

\noindent\textbf{Author Contributions}\quad
All authors developed and discussed the results and contributed to the final
manuscript.

\medskip

\noindent\textbf{Data Availability}\quad Data sharing is not applicable
to this article as no data sets were generated or analyzed.

\medskip

\noindent\textbf{Conflict of interest}\quad All authors state no conflict of interest.

\medskip

\noindent\textbf{Informed Consent}\quad Not applicable.


\begin{thebibliography}{99}

\bibitem{AC}
A. Almeida and A. Caetano,
\textit{On 2-microlocal spaces with all exponents variable},
Nonlinear Anal. \textbf{135} (2016), 97--119.

\bibitem{ACatoms}
A. Almeida and A. Caetano,
\textit{Atomic and molecular decompositions in variable exponent
2-microlocal spaces and applications},
J. Funct. Anal. \textbf{270} (2016), 1888--1921.

\bibitem{c}
A. Almeida and P. H\"ast\"o,
\textit{Besov spaces with variable smoothness and integrability},
J. Funct. Anal. \textbf{258} (2010), 1628--1655.

\bibitem{BMNT10}
\'A. B\'enyi, D. Maldonado, V. Naibo and R. H. Torres,
\textit{On the H\"ormander classes of bilinear pseudodifferential operators},
Integral Equations Operator Theory \textbf{67} (2010), 341--364.

\bibitem{b}
\'A. B\'enyi, F. Bernicot, D. Maldonado, V. Naibo and R. H. Torres,
\textit{On the H\"ormander classes of bilinear pseudodifferential operators II},
Indiana Univ. Math. J. \textbf{62} (2013), 1733--1764.

\bibitem{Besov03}
O. V. Besov,
\textit{Equivalent normings of spaces of functions of variable smoothness},
Proc. Steklov Inst. Math. \textbf{243} (2003), 80--88.

\bibitem{Besov05}
O. V. Besov,
\textit{Interpolation, embedding, and extension of spaces of functions
of variable smoothness},
Proc. Steklov Inst. Math. \textbf{248} (2005), 47--58.

\bibitem{CM75}
R. R. Coifman and Y. Meyer,
\textit{On commutators of singular integrals and bilinear singular integrals},
Trans. Amer. Math. Soc. \textbf{212} (1975), 315--331.

\bibitem{CM78}
R. R. Coifman and Y. Meyer,
\textit{Au-del\`a des op\'erateurs pseudo-diff\'erentiels},
Ast\'erisque \textbf{57}, Soci\'et\'e Math\'ematique de France, Paris, 1978.

\bibitem{DHHMS}
L. Diening, P. Harjulehto, P. H\"ast\"o, Y. Mizuta and T. Shimomura,
\textit{Maximal functions in variable exponent spaces: limiting cases
of the exponent},
Ann. Acad. Sci. Fenn. Math. \textbf{34} (2009), 503--522.

\bibitem{DHR}
L. Diening, P. H\"ast\"o and S. Roudenko,
\textit{Function spaces of variable smoothness and integrability},
J. Funct. Anal. \textbf{256} (2009), 1731--1768.

\bibitem{DB}
D. Drihem and W. Hebbache,
\textit{Boundedness of non-regular pseudodifferential operators on variable
Besov spaces},
J. Pseudo-Differ. Oper. Appl. \textbf{8} (2017), 167--189.

\bibitem{DHC}
D. Drihem and W. Hebbache,
\textit{Continuity of non-regular pseudodifferential operators on variable
Triebel--Lizorkin spaces},
Ann. Polon. Math. \textbf{122} (2019), 233--248.

\bibitem{HN23}
E. Hale and V. Naibo,
\textit{Fractional Leibniz rules in the setting of quasi-Banach function spaces},
J. Fourier Anal. Appl. \textbf{29} (2023), Paper No. 64, 46 pp.

\bibitem{HN}
J. Herbert and V. Naibo,
\textit{Bilinear pseudodifferential operators with symbols in Besov spaces},
J. Pseudo-Differ. Oper. Appl. \textbf{5} (2014), 231--254.

\bibitem{Kempka09}
H. Kempka,
\textit{2-microlocal Besov and Triebel--Lizorkin spaces of variable
integrability},
Rev. Mat. Complut. \textbf{22} (2009), 227--251.

\bibitem{Kempka10}
H. Kempka,
\textit{Atomic, molecular and wavelet decomposition of 2-microlocal
Besov and Triebel--Lizorkin spaces with variable integrability},
Funct. Approx. Comment. Math. \textbf{43} (2010), 171--208.

\bibitem{n}
H. Kempka and J. Vyb\'iral,
\textit{Spaces of variable smoothness and integrability: characterizations
by local means and ball means of differences},
J. Fourier Anal. Appl. \textbf{18} (2012), 852--891.

\bibitem{Leopold89}
H.-G. Leopold,
\textit{On Besov spaces of variable order of differentiation},
Z. Anal. Anwend. \textbf{8} (1989), 69--82.

\bibitem{LT}
Y. Liu and L. Wang,
\textit{The boundedness of bilinear pseudodifferential operators in
Triebel--Lizorkin and Besov spaces with variable exponents},
Bull. Korean Math. Soc. \textbf{61} (2024), 529--540.

\bibitem{11}
J. Marschall,
\textit{Weighted parabolic Triebel spaces of product type, Fourier
multipliers and pseudo-differential operators},
Forum Math. \textbf{3} (1991), 479--511.

\bibitem{12}
J. Marschall,
\textit{Nonregular pseudo-differential operators},
Z. Anal. Anwend. \textbf{15} (1996), 109--148.

\bibitem{NO}
V. Naibo,
\textit{On the bilinear H\"ormander classes in the scales of
Triebel--Lizorkin and Besov spaces},
J. Fourier Anal. Appl. \textbf{21} (2015), 1077--1104.

\bibitem{NT19}
V. Naibo and A. Thomson,
\textit{Coifman--Meyer multipliers: Leibniz-type rules and applications
to scattering of solutions to PDEs},
Trans. Amer. Math. Soc. \textbf{372} (2019), 5453--5481.

\bibitem{T83}
H. Triebel,
\textit{Theory of Function Spaces},
Monographs in Mathematics, Vol. 78,
Birkh\"auser, Basel, 1983.

\bibitem{Vybiral09}
J. Vyb\'iral,
\textit{Sobolev and Jawerth embeddings for spaces with variable
smoothness and integrability},
Ann. Acad. Sci. Fenn. Math. \textbf{34} (2009), 529--544.

\bibitem{Xu08}
J. Xu,
\textit{Variable Besov and Triebel--Lizorkin spaces},
Ann. Acad. Sci. Fenn. Math. \textbf{33} (2008), 511--522.

\bibitem{Xu09}
J. Xu,
\textit{An atomic decomposition of variable Besov and Triebel--Lizorkin
spaces},
Armen. J. Math. \textbf{2} (2009), 1--12.

\bibitem{XE}
J. Xu and J. Zhu,
\textit{Estimates of bilinear pseudodifferential operators associated to
bilinear H\"ormander classes in Besov and Triebel--Lizorkin spaces with
variable exponents},
J. Inequal. Appl. (2018), Paper No. 169, 21 pp.

\end{thebibliography}
\end{document}